\documentclass[10pt]{article}
\usepackage[utf8]{inputenc}

\usepackage{amsfonts}
\usepackage{placeins}
\usepackage{amsthm}
\usepackage{amssymb,color}
\usepackage{mdframed}
\usepackage{subcaption} 
\usepackage{cancel}
\usepackage[leqno]{amsmath}
\usepackage[unicode]{hyperref}
\hypersetup{colorlinks, linkcolor={blue},citecolor={green}, urlcolor={blue}}

\theoremstyle{plain}
\newtheorem{theorem}{Theorem}[section]
\newtheorem{lemma}[theorem]{Lemma}
\newtheorem{proposition}[theorem]{Proposition}

\newtheorem{assumption}{Assumption}
\theoremstyle{definition}
\newtheorem{definition}[theorem]{Definition}
\newtheorem{remark}[theorem]{Remark}

\usepackage[top=2.4cm,bottom=2.4cm,left=1.8cm,right=1.8cm,headsep=10pt]{geometry}

\usepackage[shortlabels]{enumitem}
\usepackage{nameref}
\usepackage{amsmath}
\usepackage{array}
\usepackage{url}

\makeatletter

\usepackage{todonotes}
\DeclareMathOperator*{\minimize}{minimize}

\def \E{\mathbb{E}}
\def \F{\mathbb{F}}

\def \N{\mathbb{N}}

\def \P{\mathbb{P}}

\def \R{\mathbb{R}}

\def\Ac{{\cal A}}
\def\Bc{{\cal B}}
\def\Cc{{\cal C}}

\def\Fc{{\cal F}}

\def\Lc{{\cal L}}

\def\Pc{{\cal P}}

\def\Tc{{\cal T}}
\def\Uc{{\cal U}}

\newcommand{\nico}[1]{\textcolor{black}{#1}}

\newcommand{\green}[1]{\textcolor{black}{#1}}

\newcommand{\val}{\text{val}}

\title{Investment and Operational Planning for electricity markets with massive entry of renewable energy}
\author{Nicolás HERN\'ANDEZ-SANTIB\'A\~NEZ\footnote{Departamento de Matemática, Universidad Técnica Federico Santa María, Chile. nicolas.hernandezs@usm.cl. 
}, Alejandro JOFR\'E\footnote{Departamento Ingeniería Matemática and Centro de Modelamiento Matemático (CNRS IRL2807), Universidad de Chile, Santiago, Chile. ajofre@uchile.cl}, Matías VERA\footnote{ETH Zürich, Department of Mathematics, Switzerland. matias.veravillalobos@math.ethz.ch
}
}
\date{\today}

\begin{document}
\setlength{\parindent}{0cm}
\setlength{\parskip}{0.2cm}
\maketitle

\begin{abstract}
In wholesale electricity markets, electricity producers and the \emph{independent system operator} (ISO) play a central role. The ISO is responsible for minimizing production costs while satisfying supply–demand balance and capacity constraints. In this paper, we study a continuous-time problem in which the ISO seeks to minimize the joint cost of operation and investment in an electricity network. The problem is formulated in terms of operational and investment control variables. We analyze the hierarchy between these controls and use the so-called \textit{Day-Ahead Problem} to find an explicit form of the optimal operation. This allows us to reformulate the \nico{investment} problem as a stochastic control problem with state constraints. We extend the results of state-constrained stochastic control to fit our setting. In particular, we use a version of the \textit{Pointing Inward Condition} to fully characterize the value of the problem as the unique viscosity solution of a constrained HJB equation. We then assign a specific dynamic to the capacity-demand process and discuss how the assumptions for the  HJB characterization result in a budget constraint for the planning. Finally, we run simulations for a three-node setting that resembles the Chilean market. We analyze short-, medium-, and long-term planning scenarios and discuss how to transition toward a system with high penetration of renewable energy.

\vspace{5mm}

\noindent{\bf Keywords:} investment planning; electricity networks; stochastic control; state constraints.
\vspace{5mm}

\noindent{\bf AMS 2000 subject classifications:} 91B32, 93E20, 49L20 

\end{abstract}

\paragraph*{Acknowledgments.} This work was supported by Centro de Modelamiento Matemático (CMM) BASAL fund FB210005 for center of excellence from ANID-Chile, and ANID FONDECYT Iniciación 11240944 from ANID  (Chile), Basal project of ANID FB210005 (Chile), and the Consolidation Grant 2022 project COG224 from Seri (Switzerland). The third author gratefully acknowledges partial support by the SNF project MINT 205121-21981 (Switzerland).


\section{Introduction}\label{sec:Intro}

\nico{Energy is one of the most important components in the development of modern society.} However, pollution and climate change have become significant threats to living standards and future generations. Renewable energy appears to be one of the possible solutions to this problem. Furthermore, many countries are pursuing ambitious renewable targets; in particular, the Chilean energy industry aims to become completely renewable in the near future. In the \textit{Climate Change Conference} (COP25) Chile committed to producing 70\% of its total energy from clean and renewable sources by 2030.\footnote{For more information, see \url{https://cop25.mma.gob.cl/legadocop25/}} The transition to cleaner energy has several advantages and challenges associated with it; for instance, renewable energy is usually cheaper, and countries like Chile have the territorial conditions required for massive production; however, the capacity of renewable energy sources is strongly subject to uncertainty, complicating long-term planning. \nico{In this paper, we examine the complexity arising from the massive entry of renewable energy into an existing electricity market under uncertainty, primarily induced by the stochastic nature of renewable generation. We propose a continuous-time model that addresses both investment and operational decisions within the electricity network, formulated as a stochastic control problem.} Compared to discrete-time stochastic optimization approaches, such as those surveyed by Anderson et al. \cite{andersonetal2025} and Roald et al. \cite{roald2023}, our framework provides complementary insights into the optimal strategies. More broadly, there is a growing need for sector-specific, policy-oriented dynamic models that incorporate uncertainty and technological change (see Barbrook-Johnson et al. \cite{barbrook2024}). The class of stochastic optimal control models developed in this paper provides a natural and flexible framework to address these challenges.



\nico{The proposed model incorporates some particular features of the electricity market. In particular, it is a non-local market, meaning that geographical factors must be taken into account when modeling the entire transmission system. This spatial dimension is typically represented in the network through transmission constraints with thermal losses and has been extensively studied in static models.} Escobar and Jofre \cite{Jofre2010Monop} studied a pool type electricity market with declaration of prices and energy allocation by an independent system operator (ISO), who considers the transmission constraints under quadratic resistance losses. Similar constraints are considered in works as Aussel, Correa, and Marechal \cite{aussel2013electricity}, Aussel, {\v{C}}ervinka, and Marechal \cite{aussel2016deregulated}, and Henrion, Outrata, and Surowiec \cite{henrion2012analysis}, mostly interested in electricity auctions. In general, the quadratic resistance is well accepted when working in static problems with power transmission however, when considering a continuous-time setting, there are not many considerations of it. Aid, Campi, Huu, and Touzi \cite{aid2009structural} consider the demand process in a large geographical region and see it later as residual demand, avoiding thus the network structure inherent in the electric market and, as a consequence, the transmission constraints. Similarly, Aid, Campi, Langrené, and Pham \cite{aid2014probabilistic} consider a novel switching dynamic for the capacities of the producers, while ignoring the network structure of the market. In a recent work, Hernández-Santibáñez, Jofré, and Possamaï \cite{hernandez2023pollution} proposed a model with a network structure and quadratic losses, which studies how remunerations or fines, provided by the ISO in the form of contracts, may affect pollution levels over time. In contrast to the static case studies, in their model the ISO is the upper--level player and the generators are the lower--level players. Our model complements that of \cite{hernandez2023pollution} by incorporating new aspects that are inherent to renewable energy. In our formulation, capacity is not fixed, as we consider it to be a controlled stochastic process. \nico{Therefore, producers may not be able to satisfy their local demand and transmission in the network plays a major role.} By resorting to the static version of the planning problem, which has been well--studied, we are able to transform the problem of interest into a state--constrained stochastic control problem.


\nico{Most investment and capacity–expansion problems in electricity markets are typically formulated as mixed--integer stochastic programming models, often in a static or discrete multi–period setting. For an extensive review of these models, we recommend the book by Conejo, Carrión, and Morales \cite{conejo2010decision} or the article by Sagastizábal \cite{sagastizabal2012divide} as well as the references therein. These approaches are well suited for policy evaluation and market design, but they rely on finite--dimensional decision variables, see for instance \cite[Section 5]{borges2023distributionally} or \cite{fisher2008optimal, khodaei2010transmission}. In contrast, continuous--time stochastic control methods have been mainly applied to problems such as optimal operation, storage management, and real options analysis, while their use in capacity expansion and market design, particularly in the presence of network constraints, remains relatively limited. For this reason, the contribution of our paper is also methodological. We formulate the problem as a continuous–time stochastic control problem, in which investment and operational decisions evolve jointly, subject to network constraints. To tackle such problem, we reformulate it as a state--constrained problem. }


    The study of state--constrained control problems via the Hamilton-Jacobi-Bellman approach was initiated in the deterministic setting by Soner \cite{soner1986optimal,soner1986optimal2}, where the so-called \textit{Pointing Inward Condition} (PIC) 
    was introduced. Roughly speaking, this condition says that, on the boundary of the constraint-set, there is a choice of a control that leads the state to the interior of the set. Soner proved the value function to be the unique viscosity solution of a first order constrained Hamilton-Jacobi equation; this translates into being supersolution in the whole set while subsolution only in the interior. Still in a deterministic setting, Ishii and Koike \cite{ishii1996new} extended this result with their own PIC, proving that the value function is a subsolution on the boundary of a HJB equation, but for an inward pointing Hamiltonian that only considers the \textit{pointing inward controls}. In the stochastic setting, Lasri and Lions \cite{lasry1989nonlinear} studied a specific state-constrained problem, with only control of the drift and the identity matrix as volatility. The uncontrolled volatility forces one to consider an unbounded control set, which in the context of energy markets is usually undesirable when modeling the dynamics of a capacity process. Katsoulakis \cite{katsoulakis1994viscosity} studied the constrained problem with a compact control set, using the PIC and limiting the directions in which the process is allowed to move, so that it does not escape the set. The main assumptions in this work are that the constraint-set is $C^3$ and that the drift, volatility, and objective functions are bounded. This results in a characterization for the value function as the unique continuous viscosity solution of a second order constrained HJB equation. Ishii and Loreti \cite{ishii2002class} extended the PIC, asking in addition for the control to turn off the volatility. They characterized the value function as the unique viscosity solution of the second order constrained HJB and proved the subsolution property on the boundary of the set for the inward pointing Hamiltonian. In this work no regularity of the constraint-set is needed, nonetheless, there are technical limitations such as relying on the boundedness of the set, the drift, volatility, and objective function. Later, Bouchard and Nutz \cite{bouchard2012weak} extended the so-called weak dynamic programming principle, previously developed by Bouchard and Touzi \cite{bouchard2011weak}, to the case with state constraints. They dropped the boundedness assumptions on the set, the drift, the volatility and the objective function, and instead required local Lipschitz continuity and linear growth of the coefficients and the objective function. To obtain the HJB characterization of the value function, they assume that the lower semicontinuous envelope of the value function is of class $R(O)$, which is a variant of the PIC condition for sets with more general shapes. The authors provide an example in which this assumption holds in the case of a $C^1$ constraint set. Unfortunately, such a condition is not satisfied in our problem. \nico{Indeed, our constraint set is determined by the feasibility of the network operation, and it is non-smooth even in very simple examples. We therefore adapt the classical techniques in the literature, justifying that the PIC condition is an economically reasonable assumption in the electricity market, and proving that the value function of our problem is of class $R(O)$. This allows us to characterize the value function of our problem as the unique viscosity solution to the associated HJB equation.}


This paper is organized as follows. Section \ref{sec:CIP} formally introduces the model of the electricity network along with the planning problem. In Section \ref{sec:Coupling} we tackle the problem; we discuss the hierarchy of the controls, we introduce the so--called \emph{Day Ahead Problem}, and we use it to reformulate the initial problem as a stochastic control problem with state constraints. Section \ref{sec:State-Constr} is devoted to proving the HJB characterization of the value function; we adapt the results of Bouchard and Nutz \cite{bouchard2012weak} to a Lagrange formulation, and we prove that the extension of the PIC allows to obtain the comparison principle needed for the characterization. Section \ref{sec:ito-process} assigns a particular dynamic to the capacity-demand process, and we discuss that under this dynamic, the assumptions from the previous chapter reduce to a budget constraint. We then prove that under the correct budget, the value function is uniquely characterized by the HJB equation. Finally, in Section \ref{sec:Num-Results} we provide numerical results for networks with three nodes.

\textbf{Notations:} We let $\N := \{ 0,1,2,\dots\}$ be the set of natural numbers $\N^{\star} := \N\setminus\{ 0\}$. For $l \in \N^\star$, we denote by $\R^l_+$ the set of $l$-dimensional elements with real and positive coordinates. For $x \in \R^l$, $x^i$ denotes the $i$-th coordinate of the vector $x$. For $\gamma >0$, $B(x,\gamma)$ denotes the Euclidean ball centered in $x$ with radius $\gamma$. For $x,y,z\in \R^l$, we say that $x \in [y,z]$ if and only if $x^i \in [y^i,z^i]$ for each $i \in \{ 1,\dots,l\}$. For $d,k\in \N^\star$ we denote by $\R^{d,k}$ the set of matrices with real entries, $d$ rows, and $k$ columns. $\cdot^\top$ denotes the transpose operation in $\R^{d,k}$. We denote $S^d\subseteq \R^{d,d}$, the set of symmetrical matrices.  For a function $\varphi(t,x)$ we denote by $\partial_t\varphi$ its time partial derivative and $D\varphi$ its spatial gradient. For $p,n\in\N^\star$ we denote by $C^{p,n}([0,T] \times \R^l,\R^k)$ the space of all functions $f : [0,T]\times \R^l \to \R^k$ that are $p$ times continuously differentiable in the first variable, $n$ times continuously differentiable in the second variable. For any $C \subseteq \R^l$ we denote $\Bc(C)$ the Borel $\sigma$-algebra with respect to the trace topology. We denote by $\overline{C}$, $C^\circ$ and $\partial C$ the closure, interior, and boundary of $C$, respectively. For the probability spaces $(X_1,\Cc_1,\mu_1)$ and $(X_2,\Cc_2,\mu_2)$, we denote $\mu_1 \otimes \mu_2$ as the product measure on $X_1 \times X_2$. For $T>0$ we denote $\lambda$ as the normalized Lebesgue measure on $[0,T]$. 

\section{The planning problem}\label{sec:CIP}

We model an investment and operational planning problem in which an entity, called from now on the \textit{independent system operator} (ISO), dictates the continuous-time operation of a network with transmission losses by taking into account the production and investment costs. Each generator has a capacity process and a demand process. Since the generators are fueled by both renewable and non-renewable energy, they have uncertainty associated with the joint process. We model the uncertainty of the maximum capacity as an Itô process and the planning of the demand as a deterministic process.

\textbf{Network:} To represent the geographical component of the electric market, we consider a network structure for the problem. We take $(V,E)$ a graph where each node $i \in V$ represents a producer who has an associated capacity and demand $(Q^i,D^i)$ and each edge $e \in E$ represents a transmission line with flow limits $(\underline{\phi}^e, \overline{\phi}^e)$ and resistance $r^e \geq 0$. To simplify the notation, we consider without loss of generality that $V = \{ 1 , \dots, N\}$, where $N \in \N^{\star}$ is the number of locations and $E = \{e_1,\dots,e_M \}$ where $M \in \N^\star$ is the number of transmission lines. For each $i \in \{1 ,\dots, N\}$, we denote $K_i$ as the edges connected to the node $i$ and for each $e \in K_i$, $sgn(e,i)$ is either $+1$ or $-1$ to represent a fixed direction arbitrarily chosen.\footnote{Flow can be sent in both directions, $\phi^e >0$ represents the default direction and $\phi^e<0$ the opposite one.} 

\medskip
An interesting example is the three node case, which can be related to the Chilean geography, where the nodes represent \textit{North, Center} and \textit{South} , as seen in Figure \ref{graph:Chile}.
\begin{figure}[h]
\begin{center}
    \begin{tikzpicture}[node distance=2cm]

        \node[draw, circle, align=center] (node0) at (0,3) {
            1 };

        \node[draw, circle, align=center] (node1) at (0,0) {
            2};

        \node[draw, circle, align=center] (node2) at (3,1.5) {
            3 };

        \draw[->, thick] (node0.south) -- (node1.north)
            node[midway, left, align=center] { $r^{e_1}, \underline{\phi}^{e_1}, \overline{\phi}^{e_1}$ };

        \draw[->, thick] (node0.east) -- (node2.north west)
            node[midway, above right, align=center] {$r^{e_2}, \underline{\phi}^{e_2}, \overline{\phi}^{e_2}$};

        \draw[->, thick] (node2.south west) -- (node1.east)
            node[midway, below right, align=center] {$r^{e_3}, \underline{\phi}^{e_3}, \overline{\phi}^{e_3}$};
    \end{tikzpicture}
    \caption{Three node representation of Chile}
    \label{graph:Chile}
\end{center}
\end{figure}
The general network structure just described has been studied in the static case, see for instance \cite{aussel2016deregulated,aussel2013electricity,Jofre2010Monop}, and in a continuous--time setting in \cite{hernandez2023pollution}.

\textbf{Demand and capacity process:} As stated above, each node $i$ has an associated capacity and demand process. In general, we consider a joint Itô process. We do not limit the results to a specific type of dynamic, nonetheless in our later examples and simulations we consider the uncertainty only in the capacity and we assume that the demand follows a deterministic planification.

Let $(\Omega, \Fc, \P,\F)$ be a filtered probability space and $T>0$. Given a compact set $U \subseteq \R^k$, with $k \in \N^\star$, let $\Uc$ be the set of $U$-valued predictable processes $\nu$, these will be called controls. Now we assume that for each control $\nu \in \Uc$ one has a pair of $\F$-predictable processes $(Q^\nu_s, D^\nu_s)$. Here for $(t,\omega) \in [0,T] \times \Omega$, $Q^\nu_t(\omega)\in\R^N_+$ represents the effective maximum capacity available at each node \green{and time $t$, whereas} $D^\nu_t (\omega)\in\R^N_+$ represent the demands at each node \green{and time $t$}. 

\textbf{Network operation:} Maintaining the functionality of the network requires three components: the production must be in accordance to the capacity at each node, the flows must respect the flow limits, and the production and the flows at each node must fulfill the associated demand. Let us consider $\Ac$ the set of $\F$-predictable processes $(q,\phi)$ taking values on $\R^N \times \R^M$. \nico{For a given process $(Q_s,D_s)_{s \in [0,T]}$,} the network operation is reduced to the following constraints
\begin{align*}
    q^i_s + \sum_{e \in K_i} sgn(e,i)\phi^e_s \geq D^i_s + \sum_{e \in K_i} \frac{r^e}{2}(\phi^e_s)^2, ~ i=1,\dots,N&, \quad \forall s \in [0,T],~ \P\text{-a.s.} \\
    q_s^i \in [0,Q^i_s], \phi_s^e \in [\underline{\phi}^e,\overline{\phi}^e], ~ i=1,\dots,N,\forall e \in E&, \quad \forall s \in [0,T],~  \P\text{-a.s.}
\end{align*}
The first constraint is essentially that the amount produced at the node $q^i$ plus (or minus) the energy from the flows connected to the node must be greater than the demand and the transmission losses that are shared by both connected nodes\footnote{We incorporate quadratic losses in the model as justified in \cite{Jofre2010Monop}.}. The second constraint bounds the production to the capacity, and the flows to the flow limits. The reason behind \green{the sample-path constraint} is that the electric market considers as a ``critical case'' one in which the demand is not fulfilled. Thus, we are interested in  avoiding such critical scenarios.

\begin{remark}
Alternatively, one may relax the first almost-sure constraint by replacing it with a chance constraint, requiring instead that the probability of meeting demand be at least some level $p \in [0,1]$. However, in the context of electricity markets this interpretation corresponds to allowing demand rationing with positive probability, which is typically unacceptable for producers and regulators. For this reason, we restrict our analysis to the case $p = 1$.
\end{remark}

\textbf{Costs.} \nico{We distinguish two types of costs considered by the ISO: production costs and investment costs at each node. In practice, these two components are measured on different scales. To make them comparable, we express a continuation of the investment costs, on an annual or monthly basis, which allows us to balance the magnitudes of both types of costs.}

We consider arbitrary production cost functions $c^i:  \R_+ \times \R_+ \to \R_+$ under suitable conditions specified in Assumption \ref{assump:1}. As mentioned below, some of the conditions are technical while others come from the modeling.\footnote{ The main idea of the production cost is to represent two different unit costs for renewable $(c_R)$ and nonrenewable $(c_{NR})$ technologies. For instance, an interesting form of the costs function is given by
\begin{align*}
    c^i(q,Q) = \int_0^{q^i} c^i_R \cdot 1_{[0,Q^i-Q^i_{NR}] } (s) +  c^i_{NR} \cdot 1_{(Q^i - Q^i_{NR},\infty)}(s) \mathrm{d}s,
\end{align*}
with $0<c_R^i\leq c_{NR}^i$ and $Q^i_{NR}\geq 0$. In this context, the cheaper source of energy is always preferred.} The investment cost functions $h^i:  U\times \R_+ \times \R_+ \to \R_+$ have different interpretations depending on how one controls the capacity-demand process. In the general setting, we do not make major assumptions in them
. In the model presented in Section \ref{sec:ito-process}, we separate the investment variable into $\nu = (\mu,\alpha)$, where $\mu$ will be the investment in the acquisition of renewable technologies, for instance solar panels, while $\alpha$ will be the stabilization factor associated to the capacity process. We will also separate the costs $h^i = h^i_\mu + h^i_{\alpha}$, with $h^i_\mu$ the cost of technology and installation, and $h^i_\alpha$ the unitary cost of stabilizing the volatility. 


\textbf{Planning problem:} With all the components needed to model the problem, the ISO's goal is to keep the network operating correctly throughout the period while minimizing the total cost of operation and investment. Since the problem is set over $[0,T]$ and there is uncertainty in the capacities, the resulting problem of the ISO is described by
\begin{align*}\label{prob:generalprob}\tag{$P$}
    \minimize_{(\nu,q,\phi)\in \Uc \times \Ac} \hspace{.3cm}& \E \left[  \int_0^T \sum_{i=1}^N c^i(q^i_s,Q^{i,\nu}_s) + h^i(\nu_s,Q^{i,\nu}_s, D^{i,\nu}_s) \text{d}s\right] \\
    \text{s.t.} ~~~~~~& q^i_s + \sum_{e \in K_i} sgn(e,i)\phi^e_s \geq D^{i,\nu}_s + \sum_{e \in K_i} \frac{r^e}{2}(\phi^e_s)^2, ~ i=1,\dots,N, ~ \forall s \in [0,T],~ \P\text{-a.s.} \\
    &q_s \in [0,Q^{\nu}_s], \phi_s \in [\underline{\phi},\overline{\phi}], ~  \forall s \in [0,T],~  \P\text{-a.s.},
\end{align*}
where we use the standard convention $\inf\{\emptyset\} = +\infty$. 

\begin{remark}\label{remark:centralized-problem}
Our model considers a centralized market in which the ISO controls both operational and investment decisions and seeks to minimize total cost. The ISO’s investment decisions can also be interpreted as recommendations to producers, provided that it has complete information about their cost functions and aims to minimize social cost. The structure of the problem changes significantly when investment decisions are made independently by producers. At the end of Section \ref{sec:conclusion}, we discuss how the problem can be reformulated in such setting, how some of our results may extend, and the main challenges that arise.
\end{remark}

\section{Reformulation of the control problem}\label{sec:Coupling}

The main idea to tackle problem \eqref{prob:generalprob} is to understand the hierarchy over the decision variables. Once the investment control is placed, the production plan control may be decided directly from the capacity-demand process. This means that the control $\nu \in \Uc$ has a greater importance in regard to the value of the minimization problem, or at least, has more freedom for choosing its values. It takes only into consideration that there is room to choose $(q,\phi)\in\Ac$ to maintain the network operation functioning.

\subsection{Hierarchy of controls} 
Let us translate the idea of hierarchy in the decision variables into a mathematical expression. For this we need to define what it means to have a feasible pair $(Q,D)$.
\begin{definition}
    We say that a pair $(Q,D) \in \R^N_+ \times \R^N_+$ is \textit{feasible} if
    \begin{align*}
        \exists q \in [0,Q], \exists \phi \in [\underline{\phi}, \overline{\phi}]\text{ such that } q^i + \sum_{e \in K_i} sgn(e,i)\phi^e \geq D^i + \sum_{e \in K_i} \frac{r^e}{2}(\phi^e)^2,~i=1,\dots,N.
    \end{align*}
    We say that $G: \R^N_+ \times \R^N_+ \to \R^m$, with $m\in \N^\star$, is a \emph{feasibility function} if for any $(Q,D) \in \R^N_+ \times \R^N_+$ one has that $G(Q,D) \geq 0$ implies that $(Q,D)$ is a feasible pair.
\end{definition}

We will use a feasibility function to interchange the role of $(q,\phi)$, in the correct operation of the network, and give it to $(Q^\nu,D^\nu)$. To solve the problem \eqref{prob:generalprob}, we will ignore the network structure when choosing $\nu$ and focus only in the need to maintain the network operation. Therefore we will use a feasibility function that plays the role of sufficient and necessary condition for the network operation (see Remark \ref{rem:feasibiliy-function}). Let us define thus $G : \R^N_+ \times \R^N_+ \to \R$ as follows
\begin{align}\label{func:G}
    G(Q,D) := \max_{(q,\phi) \in [0,Q] \times [\underline{\phi}, \overline{\phi}]} \min_{i=1,\dots, N}  q^i - D^i + \sum_{e \in K_i}\left( sgn(e,i)\phi^e - \frac{r^e}{2}(\phi^e)^2 \right).
\end{align}
It is quite obvious that this is a feasibility function and also $(Q,D)$ being a feasible pair implies that $G(Q,D)\geq 0$. From here on every time we denote $G$ we refer to \eqref{func:G}.

\begin{remark}\label{rem:feasibiliy-function}
The feasibility function has a role of a necessary but not sufficient condition for the network operation. This `relaxed' concept allows us to open the model to different structures needed over the network \green{as detailed in Section \ref{sec:Intro}}. For example, if each generator wants to have enough capacity to satisfy strictly more than its demand, then one may consider a slackness $\alpha \in\R^N_+$ and the feasibility function $G(Q,D) = (Q^i - D^i - \alpha^i)_{i=1,\dots,N}$.     
\end{remark}

Returning to the control variables $(q,\phi)$, we state that in the hierarchy of controls, these are the last ones to be chosen. The general meaning of this, is that one can consider a sub-problem once $\nu$ is already chosen, subject to $(Q^\nu,D^\nu)$ being a feasible pair (at every instance in almost surely every trajectory). To model such sub-problem, let us define the following set
\begin{align*}
    \Uc_G := \{ \nu \in \Uc  : G(Q^\nu_s,D^\nu_s) \geq 0, \forall s\in [0,T],~\P\text{-a.s.} \}.
\end{align*}
If $\Uc_G = \emptyset$, then \eqref{prob:generalprob} has an empty feasible set so its value is directly $+\infty$. We may now consider the sub-problem of minimizing the production costs under the network operation constraints. For each $\nu \in \Uc_G$ we consider
\begin{align}\label{prob:nu-subproblem}\tag{$P_{\nu}$}
    \minimize_{(q,\phi)\in \Ac} ~& \E \left[  \int_0 \sum_{i=1}^N c^i(q^i_s,Q^{i,\nu}_s) \text{d}s\right] \\
    \text{s.t.} ~& q^i_s + \sum_{e \in K_i} sgn(e,i)\phi^e_s \geq D^{i,\nu}_s + \sum_{e \in K_i} \frac{r^e}{2}(\phi^e_s)^2, ~ i=1,\dots,N ~ \forall s \in [0,T],~ \P\text{-a.s.} \label{const:supply>demand} \\
    &q_s \in [0,Q^{\nu,\mu}_s], \phi_s \in [\underline{\phi},\overline{\phi}], ~  \forall s \in [0,T],~  \P\text{-a.s.} \label{const:capacity-const}
\end{align}
This problem is a continuous and stochastic form of a (simplified) problem known as the \textit{Day ahead problem} (DAP), which will be properly defined in subsection \ref{sec:DAP}. We now notice that the hierarchy of decision is explicit by using \eqref{prob:nu-subproblem}. Indeed, let us define $\Ac_\nu$ as the processes in $\Ac$ that satisfy \green{Constraints \eqref{const:supply>demand} and \eqref{const:capacity-const}}. By defining $V = \val(P)$ and $V_\nu = \val(P_\nu)$, one obtains that
\begin{align*}
    V 
    = \inf_{\nu \in \Uc_G} \left\{ V_\nu + \E\left[ \int_0^T \sum_{i=1}^N h^i(\nu_s, Q^{i,\nu}_s, D^{i,\nu}_s) \text{d}s  \right] \right\}.
\end{align*}
This last expression removes completely the dependence of $V$ on the controls $(q,\phi)$, interchanging them with a state constraint condition and the value function $V_\nu$. As said before, the problem \eqref{prob:nu-subproblem} resembles a continuous and stochastic version of the (DAP) so it is rather natural to think that the optimal process $(q, \phi)$ is not far from the optimal points of the (DAP). This intuition is the key that will allow us to find an explicit expression to $V_\nu$ and transform the original problem into a state constrained problem. In the next section, we focus on understanding the standard (DAP) problem.

\subsection{The day ahead problem}\label{sec:DAP}

The (DAP) is the lower problem of an auction in which each generator (or node in our network structure) declares his prices, capacities and demands, then the ISO chooses the production at each node along with the flows at each edge so that the demand constraints are satisfied and the production cost is as low as possible. The upper problem in this auction is that each generator wants to maximize the payment obtained from the ISO. This auction-type game happens repeatedly and the time between two repetitions is short, usually no more than a day, therefore its name. Given the sets of cost functions, \nico{demands and capacities $c^i :\R_+ \times \R_+ \to \R_+$, $D^i\in \R_+$ and $Q^i \in \R_+$, respectively for $i=1,\dots,N$,} we define
\begin{align*}\label{prob:DAPQD}\tag{$DAP$}
    \minimize_{(q,\phi) \in \R^N \times \R^M} ~ & \sum_{i=1}^N c^i(q^i, Q^i) \\ 
    \text{s.t.}~~~~~~& q^i + \sum_{e \in K_i} sgn(e,i)\phi^e \geq D^i + \sum_{e \in K_i} \frac{r^e}{2}(\phi^e)^2,~i=1,\dots,N, \\
    & q \in [0,Q], \phi \in [\underline{\phi},\overline{\phi}].
\end{align*}
This problem under suitable conditions is well behaved. To simplify further the notation we introduce the functions
\begin{align*}
    T^i (q,\phi;D) = q^i - D^i + \sum_{e\in K_i} \bigg( sgn(e,i)(\phi^e) - \frac{r^e}{2}(\phi^e)^2\bigg), \quad i=1,\dots,N.
\end{align*}
\begin{assumption}[Network structure]\label{assump:1} ~ \\
 $(i)$ For each $i=1,\dots,N$, $c^i$ is continuous, convex, strictly increasing in the first variable and measurable in the second variable. Moreover, $c^i(0,\cdot) \equiv 0$ and $\lim_{q \to 0^+} c^i(q,\cdot)/q>0$. \\
 $(ii)$ For each $e \in E$, $r^e > 0$.
\end{assumption}
As mentioned in the previous section, Assumption \ref{assump:1} is important for both technical and modeling reasons. Aiming to represent a realistic geographically differentiated electric market, we assume in $(i)$ that not producing has no cost and there is no unit of energy for free, while in $(ii)$ we assume that there is always a loss in the transmissions.
\begin{proposition}\label{prop:KirchhoffLaw}(Kirchhoff Law) Let Assumption \ref{assump:1} hold true and additionally assume that \eqref{prob:DAPQD} is feasible. Then for any solution $(q,\phi)$ of \eqref{prob:DAPQD} one has that $T^i(q,\phi;D) = 0, \forall i = 1,\dots,N.$
\end{proposition}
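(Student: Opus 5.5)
The plan is to argue by contradiction. Let $(q,\phi)$ solve \eqref{prob:DAPQD} and suppose $T^{i_0}(q,\phi;D)>0$ at some node $i_0$. I will build a feasible $(\tilde q,\tilde\phi)$ with strictly smaller cost, using only two facts from Assumption~\ref{assump:1}: $c^i$ is strictly increasing in $q$, and $r^e>0$ on every edge. I also assume $\underline{\phi}^e\le 0\le \overline{\phi}^e$, which is implicit in the model, so that a flow can be pushed towards zero. For an edge $e$ and one of its endpoints $i$, write $f_{e,i}(\phi^e)=sgn(e,i)\phi^e-\frac{r^e}{2}(\phi^e)^2$, so that $T^i=q^i-D^i+\sum_{e\in K_i}f_{e,i}(\phi^e)$.

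\textbf{Step 1 (producing node).} If $q^{i_0}>0$, lower $q^{i_0}$ by $\min\{q^{i_0},T^{i_0}\}/2$. This keeps $q\in[0,Q]$ and all the constraints, and since $c^{i_0}$ is strictly increasing the cost drops strictly, a contradiction. So assume $q^{i_0}=0$. Then $\sum_{e\in K_{i_0}}f_{e,i_0}(\phi^e)>D^{i_0}\ge 0$, so some edge $e_1$ joining $i_0$ to a neighbour $i_1$ delivers a positive amount to $i_0$. Orient it as $i_1\to i_0$, with $\phi^{e_1}$ measured positively in that direction; then $f_{e_1,i_0}(\phi^{e_1})>0$ forces $0<\phi^{e_1}<2/r^{e_1}$.

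\textbf{Step 2 (normalisation).} If $\phi^{e_1}>1/r^{e_1}$, replace it by the reflected value $2/r^{e_1}-\phi^{e_1}$. By symmetry of the parabola this leaves $f_{e_1,i_0}$ unchanged, and it strictly increases $f_{e_1,i_1}=-\phi-\frac{r}{2}\phi^2$ because the flow is smaller. This keeps feasibility and moves the slack to $i_1$. I would therefore first reduce to the case where every flow satisfies $|\phi^e|\le 1/r^e$ (otherwise some node has strict slack reachable by this device). Now decreasing $\phi^{e_1}$ by a small $\varepsilon>0$ costs $i_0$ about $(1-r\phi)\varepsilon$, which is covered by its slack. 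It also gives $i_1$ the amount $(1+r\phi)\varepsilon>0$.

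\textbf{Step 3 (walking upstream).} Consider $i_1$. If $q^{i_1}>0$, decrease $\phi^{e_1}$ by a small $\varepsilon$ and $q^{i_1}$ by a smaller amount, keeping $T^{i_1}\ge 0$. This is feasible and strictly cheaper, a contradiction. If $q^{i_1}=0$, then since $f_{e_1,i_1}<0$ and $D^{i_1}\ge0$, some other edge $e_2$ delivers positive flow into $i_1$ from a node $i_2$, and we continue.

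At an intermediate node with positive inflow $\phi_a$ and positive outflow $\phi_b$, lowering both by $\varepsilon$ changes its balance by
\begin{align*}
-(1-r^{a}\phi_a)\varepsilon+(1+r^{b}\phi_b)\varepsilon+O(\varepsilon^2)=(r^a\phi_a+r^b\phi_b)\varepsilon+O(\varepsilon^2)>0 .
\end{align*}
So uniformly reducing the flows along a directed path of positive flows preserves feasibility at all interior nodes. The walk lives in a finite graph, so it terminates in one of two ways.

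It may reach a node with $q>0$. Then reduce the flows along the path by a small $\varepsilon$ and lower $q$ at that source node by a positive amount of order $\varepsilon$, giving a strict cost decrease.

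Or it may revisit a node, producing a directed cycle of positive flows. Lowering all cycle flows by $\varepsilon$ then gives every cycle node strictly positive extra slack by the display above, with no cost change. That slack can be fed back into Step 1 or Step 3 from any cycle node, or, more cleanly, the cycle can be cancelled first. Either way optimality is contradicted, so $T^i(q,\phi;D)=0$ for all $i$.

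\textbf{Main obstacle.} The delicate part is the bookkeeping in Step 3. I need to choose a single small $\varepsilon$ such that every bound $[\underline{\phi}^e,\overline{\phi}^e]$ and $[0,Q^i]$ stays respected, and such that all second-order terms are dominated by the first-order gains. I also have to handle the cycle case and the reflection of Step 2 without creating new infeasibility elsewhere. I would organise this by first proving a lemma: at an optimal point no flow exceeds $1/r^e$ in absolute value and the positive-flow digraph is acyclic. The contradiction along a simple path then becomes a clean finite-dimensional perturbation argument.
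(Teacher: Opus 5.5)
Your argument is correct and is a self-contained version of the contradiction argument that the paper does not write out but cites from \cite[Lemma 3.1]{aussel2016deregulated}, with strict monotonicity of $c^i$ doing the work; the hypothesis $\underline{\phi}^e\le 0\le\overline{\phi}^e$ that you add is genuinely needed (if the bounds force a positive flow into a node with $D^i=0$, that node has slack at every feasible point), and it is implicit in the paper's footnote that flows may go in both directions. Two tidy-ups are worth making. First, Step 2 can be dropped: every edge the walk uses already satisfies $0<\phi^e<2/r^e$, and the interior gain $(r^a\phi_a+r^b\phi_b)\varepsilon$ holds without any bound $|\phi|\le 1/r$, whereas reflecting a flow with $|\phi^e|\ge 2/r^e$ reverses it and may leave $[\underline{\phi}^e,\overline{\phi}^e]$. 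Second, close the cycle case by cancelling the cycle by its smallest flow $\delta$ rather than by feeding the slack back, since re-entering the walk from a cycle node can loop around the same cycle; cancellation is feasible because each cycle node's balance changes by exactly $r^a\delta(\phi_a-\delta/2)+r^b\delta(\phi_b-\delta/2)\ge 0$, it keeps $q$ (hence optimality) and $T^{i_0}>0$, and it strictly reduces the number of positive-delivery edges, so iterating leaves an acyclic configuration in which the walk must end at a producing node.
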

\begin{proof}
    The proof follows by the same arguments as in the proof of \cite[Lemma 3.1]{aussel2016deregulated}. Notice that the \nico{monotonicity} assumption on $c$ is enough to get the same contradiction.
\end{proof}
\begin{proposition}\label{prop:DAPuniqueness}
    Let Assumption \ref{assump:1} hold and additionally assume that \eqref{prob:DAPQD} is feasible. Then there exists a unique solution of \eqref{prob:DAPQD}.
\end{proposition}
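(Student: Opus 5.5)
Existence follows from compactness and continuity. The feasible set of \eqref{prob:DAPQD} is
\[
F=\bigl\{(q,\phi)\in[0,Q]\times[\underline{\phi},\overline{\phi}] : T^i(q,\phi;D)\ge 0,\ i=1,\dots,N\bigr\}.
\]
It is closed and bounded, since each $T^i$ is continuous, and it is nonempty by hypothesis. The objective $\sum_i c^i(q^i,Q^i)$ is continuous in $q$ by Assumption \ref{assump:1}$(i)$, with $Q$ fixed. Weierstrass' theorem therefore yields a minimizer.

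For uniqueness, I would combine convexity with Proposition \ref{prop:KirchhoffLaw}. Note that $F$ is convex: each $T^i(\cdot,\cdot;D)$ is concave in $(q,\phi)$, because it is affine in $q$ and affine minus $\tfrac{r^e}{2}(\phi^e)^2$ in $\phi$. Suppose $(q_1,\phi_1)\neq(q_2,\phi_2)$ are both solutions, and set $(\bar q,\bar\phi)=\tfrac12\bigl((q_1,\phi_1)+(q_2,\phi_2)\bigr)\in F$. Convexity of the $c^i$ gives $\sum_i c^i(\bar q^i,Q^i)\le \tfrac12\sum_i c^i(q_1^i,Q^i)+\tfrac12\sum_i c^i(q_2^i,Q^i)$, which equals the optimal value. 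Hence $(\bar q,\bar\phi)$ is also optimal, and Proposition \ref{prop:KirchhoffLaw} gives $T^i(\bar q,\bar\phi;D)=0$ for all $i$.

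The key computation is the strict concavity gap. Write $\Delta^e=\phi_1^e-\phi_2^e$. Then
\[
T^i(\bar q,\bar\phi;D)=\tfrac12 T^i(q_1,\phi_1;D)+\tfrac12 T^i(q_2,\phi_2;D)+\sum_{e\in K_i}\tfrac{r^e}{8}(\Delta^e)^2 .
\]
Both $T^i(q_k,\phi_k;D)$ vanish by Proposition \ref{prop:KirchhoffLaw}, so $T^i(\bar q,\bar\phi;D)=\sum_{e\in K_i}\tfrac{r^e}{8}(\Delta^e)^2$. Since this must be zero and $r^e>0$ by Assumption \ref{assump:1}$(ii)$, we get $\Delta^e=0$ for every edge incident to some node. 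I would assume, as is implicit in the network setting, that every edge has endpoints in $V$; this gives $\phi_1=\phi_2$. Once the flows coincide, the Kirchhoff equalities $T^i(q_k,\phi;D)=0$ determine $q_k^i=D^i-\sum_{e\in K_i}\bigl(sgn(e,i)\phi^e-\tfrac{r^e}{2}(\phi^e)^2\bigr)$ uniquely. Thus $q_1=q_2$, a contradiction.

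The main obstacle is the reliance on Proposition \ref{prop:KirchhoffLaw} applied to the midpoint. This requires the midpoint to be a genuine minimizer, which convexity of $c^i$ guarantees, and it uses the strict monotonicity assumption encoded in that proposition. Strict convexity of $c^i$ is not assumed, so it cannot be used directly. Instead, the strictness comes entirely from the quadratic losses, which is exactly why $r^e>0$ is needed. The remaining steps are routine.
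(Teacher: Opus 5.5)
Your proof is correct and follows essentially the same route as the paper. Existence comes from compactness of the feasible set. For uniqueness, you apply the Kirchhoff law (Proposition \ref{prop:KirchhoffLaw}) to a convex combination of two minimizers; the strict concavity of $T^i$ in the flows (from $r^e>0$) then forces $\phi_1=\phi_2$, and the Kirchhoff equalities force $q_1=q_2$. Your explicit midpoint identity with the gap $\sum_{e\in K_i}\tfrac{r^e}{8}(\Delta^e)^2$, and your check that the midpoint is optimal and not merely feasible, make precise two steps that the paper's proof states only briefly.
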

\begin{proof}
    Notice that the feasible set is compact, which gives us the existence of optimal solutions. Let $(q_0,\phi_0),(q_1,\phi_1)$ be two solutions of \eqref{prob:DAPQD} and let us assume that $\phi_0 \not = \phi_1$. Then there exists $i \in \{ 1,\dots,N\}$ such that $(\phi_0^e)_{e\in {K_i}} \not = (\phi_1^e)_{e\in {K_i}}$\footnote{ $(\phi^e)_{e \in K_i}$ denotes the projection of $\phi^e$ in which we only consider the coordinates $e \in K_i$}. For $(q_\lambda,\phi_\lambda) := \lambda(q_1,\phi_1) + (1-\lambda)(q_0,\phi_0)$ one has that, due to the strict concavity of $T^i(q,\phi;D)$ with respect to $(\phi^e)_{e\in K_i}$, 
    \begin{align*}
        T^i(q_\lambda,\phi_\lambda;D) > \lambda T^i(q_1,\phi_1;D) + (1-\lambda)T^i(q_0,\phi_0;D) \geq 0.
    \end{align*}
    However, since \eqref{prob:DAPQD} is a convex problem, we have that $(q_\lambda,\phi_\lambda)$ is a feasible pair for each $\lambda \in [0,1]$, thus contradicting Proposition \ref{prop:KirchhoffLaw} obtaining that $\phi_0= \phi_1$. Finally $q_0=q_1$ follows directly from the Kirchhoff's equality and the uniqueness of the optimal flows.
\end{proof}
\begin{proposition}\label{prop:Kcloseandconvex}
    $K := G^{-1}(\R_+)$ is convex and closed.
\end{proposition}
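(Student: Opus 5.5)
The plan is to identify $K$ with the projection of a convex closed set. Consider
\begin{align*}
    \mathcal{S} := \Big\{ (Q,D,q,\phi) \in \R^N_+ \times \R^N_+ \times \R^N \times \R^M ~:~ 0 \leq q \leq Q,~ \phi \in [\underline{\phi},\overline{\phi}],~ T^i(q,\phi;D) \geq 0,~ i=1,\dots,N \Big\}.
\end{align*}
Since the maximum in \eqref{func:G} is attained (continuous objective over a compact set $[0,Q]\times[\underline{\phi},\overline{\phi}]$), we have $G(Q,D)\geq 0$ if and only if there exists $(q,\phi)$ with $(Q,D,q,\phi)\in\mathcal{S}$. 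Hence $K$ is the projection of $\mathcal{S}$ onto the $(Q,D)$ coordinates.

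\textbf{Convexity.} Each map $(q,\phi,D)\mapsto T^i(q,\phi;D)$ is jointly concave: it is affine in $(q,D)$ and concave in $\phi$, because $r^e\geq 0$ makes $-\frac{r^e}{2}(\phi^e)^2$ concave. Its superlevel set $\{T^i\geq 0\}$ is therefore convex. The constraints $0\leq q\leq Q$, $\phi\in[\underline{\phi},\overline{\phi}]$ and $Q,D\geq 0$ are linear. So $\mathcal{S}$ is an intersection of convex sets, hence convex, and the linear projection of a convex set is convex. Concretely, given $(Q_0,D_0),(Q_1,D_1)\in K$ with witnesses $(q_0,\phi_0)$ and $(q_1,\phi_1)$, the convex combination of the witnesses is a witness for the convex combination of the pairs.

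\textbf{Closedness.} This is the only point needing care, since projections of closed sets need not be closed. Take $(Q_n,D_n)\in K$ converging to $(Q,D)$, with witnesses $(q_n,\phi_n)$. The flows satisfy $\phi_n\in[\underline{\phi},\overline{\phi}]$, which is compact. The productions satisfy $q_n\in[0,Q_n]$, and $Q_n$ is bounded since it converges. Extracting a convergent subsequence $(q_n,\phi_n)\to(q,\phi)$ and passing to the limit in the closed, continuous constraints gives $q\in[0,Q]$, $\phi\in[\underline{\phi},\overline{\phi}]$ and $T^i(q,\phi;D)\geq 0$. Hence $(Q,D)\in K$.

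As an alternative route, one can show $G$ is continuous (Berge's maximum theorem, since the constraint correspondence $Q\mapsto[0,Q]\times[\underline{\phi},\overline{\phi}]$ is continuous and compact-valued) and concave, so $K=G^{-1}(\R_+)$ is immediately closed and convex. The direct argument above avoids this, and its main step is just the compactness extraction.
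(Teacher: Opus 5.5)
Your proof is correct. It differs from the paper's only in the closedness step, where your main route is a direct compactness argument and your ``alternative route'' is in fact the paper's proof.

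\textbf{Convexity.} The paper only says this is ``easily deduced from \eqref{func:G}''. You spell that out: each $T^i$ is jointly concave in $(q,\phi,D)$, and the constraint $0\le q\le Q$ is jointly convex in $(q,Q)$. Hence $K$ is the projection of a convex set. Equivalently, $G$ is concave as a partial maximum of a jointly concave function over a jointly convex set.

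\textbf{Closedness.} The paper invokes Berge's maximum theorem to get continuity of $G$, so that $K$ is a closed superlevel set. You instead extract convergent witnesses $(q_n,\phi_n)$ directly. This is the same upper-hemicontinuity argument the paper itself runs for the correspondence $\psi$ in Proposition~\ref{prop:measurability(q,phi)}.

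\textbf{What each route buys.} Your argument is self-contained and uses only the ``upper'' half of Berge. It amounts to upper semicontinuity of $G$, which is all a closed superlevel set requires, and needs no lower hemicontinuity of $Q\mapsto[0,Q]$. The paper's route gives the stronger conclusion that $G$ is continuous on all of $\R^N_+\times\R^N_+$. That route, however, requires the constraint correspondence $Q\mapsto[0,Q]\times[\underline{\phi},\overline{\phi}]$ to be both upper and lower hemicontinuous with nonempty compact values. The paper leaves this implicit, and your alternative-route remark correctly identifies it.
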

\begin{proof}
    The convexity of the set $K$ can easily be deduced from \eqref{func:G}. From Berge's Maximum Theorem, (see \cite[Theorem 17.31]{Charalambo2006}) we obtain that $G$ is continuous. This implies that $K$ is closed.
\end{proof}
\begin{definition}
    For each $(Q,D) \in K$, we define the optimal production pair $(q^{\star},\phi^{\star})(Q,D)$ as the unique solution to \eqref{prob:DAPQD}.
\end{definition}
\begin{proposition}\label{prop:measurability(q,phi)}
    Let Assumption \ref{assump:1} hold true. The function $(q^{\star},\phi^{\star})(\cdot, \cdot)$ is measurable on $(K,\mathcal{B}(K))$.
\end{proposition}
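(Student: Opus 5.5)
The plan is to obtain measurability from a measurable selection theorem applied to the argmin correspondence of \eqref{prob:DAPQD}. Uniqueness (Proposition \ref{prop:DAPuniqueness}) makes that correspondence single-valued, so any measurable selector is the map $(q^\star,\phi^\star)$ itself. The difficulty is that $c^i$ is only measurable, not continuous, in its second argument. So Berge's Maximum Theorem, which would give continuity of the argmin when the objective is jointly continuous, cannot be used directly. We work instead with Carathéodory functions and the Measurable Maximum Theorem (\cite[Theorem 18.19]{Charalambo2006}).

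First, set $X := \R^N\times\R^M$ and define the constraint correspondence $\varphi : K \twoheadrightarrow X$ by
\begin{align*}
\varphi(Q,D) := \{ (q,\phi) \in [0,Q]\times[\underline{\phi},\overline{\phi}] : T^i(q,\phi;D) \geq 0,\ i=1,\dots,N\}.
\end{align*}
Its values are nonempty on $K$, since $G(Q,D)\ge 0$ gives feasibility, and they are compact. Its graph
\begin{align*}
\{(Q,D,q,\phi) : 0\le q\le Q,\ \underline{\phi}\le\phi\le\overline{\phi},\ T^i(q,\phi;D)\ge 0\ \forall i\}
\end{align*}
is closed, because each $T^i$ is continuous in $(q,\phi,D)$. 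A compact-valued correspondence with closed graph into a bounded region is upper hemicontinuous, hence weakly measurable (\cite[Theorem 18.20]{Charalambo2006} or the surrounding results). Next, the objective
\begin{align*}
f((Q,D),(q,\phi)) := \sum_{i=1}^N c^i(q^i,Q^i)
\end{align*}
is continuous in $(q,\phi)$ for fixed $(Q,D)$ and measurable in $(Q,D)$ for fixed $(q,\phi)$, by Assumption \ref{assump:1}(i). So $f$ is a Carathéodory function on $K\times X$.

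The Measurable Maximum Theorem, applied to $-f$, then yields three things: the value function is measurable, the argmin correspondence has nonempty compact values and is measurable, and it admits a measurable selector. By Proposition \ref{prop:DAPuniqueness} the argmin is the singleton $\{(q^\star,\phi^\star)(Q,D)\}$, so the selector coincides with $(q^\star,\phi^\star)$. This gives measurability on $(K,\Bc(K))$.

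The main obstacle is checking that the hypotheses match the cited theorem exactly. That theorem asks for a measurable space, here $(K,\Bc(K))$; a separable metrizable range space $X$; and a weakly measurable correspondence with nonempty compact values. Weak measurability of $\varphi$ follows from upper hemicontinuity, but it needs care, because the closed-graph argument must be carried out for the trace topology on $K$. As a fallback, one can show measurability of each coordinate directly. For each open set $O$, write $\{(Q,D): (q^\star,\phi^\star)\in O\}$ via countable dense subsets of $\varphi(Q,D)$, using a Castaing representation.
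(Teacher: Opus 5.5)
Your proposal is correct and follows essentially the same route as the paper. Both arguments show that the feasible-set correspondence is compact-valued and upper hemicontinuous (the paper via a sequential limit-point argument, you via closed graph plus local boundedness), deduce Borel measurability, and invoke the Measurable Maximum Theorem \cite[Theorem 18.19]{Charalambo2006}; your remarks that the correspondence has nonempty values on $K$ and that uniqueness from Proposition~\ref{prop:DAPuniqueness} makes the measurable selector coincide with $(q^\star,\phi^\star)$ spell out two steps the paper leaves implicit.
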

\begin{proof}
    We define the correspondence $\psi:\R^N\times\R^N \rightrightarrows \R^N\times\R^M$ by
\begin{align*} \psi (Q, D): =  \left\{ (q, \phi) \in [0,Q]\times [\underline{\phi}, \overline{\phi}]~ ;~ T^i(q,\phi,D)\geq 0   ~,~ i=1,\dots, N \right\},
    \end{align*} 
    which is clearly compact valued.  We will check now its upper hemicontinuity. Let us take a sequence $(Q_n,D_n) \to (Q,D)$ and $(q_n,\phi_n) \in \psi(Q_n,D_n),  \forall n\geq 0$. Since $(Q_n,D_n)$ is bounded, we have that $(q_n,\phi_n)$ is also bounded and it has a limit point $(q,\phi)$. By the continuity of all the functions $T^i$, we obtain that
    \begin{align*}
        q \in [0,Q], \phi \in [\underline{\phi},\overline{\phi}], \quad T^i(q,\phi,D) \geq 0, \quad \forall i=1,\dots, N.
    \end{align*}
    This means that $(q,\phi)\in \psi(Q,D)$ and therefore $\psi$ is upper hemicontinuous, which implies that the correspondence is measurable with respect to $\Bc(K)$. Finally, using the Measurable Maximum Theorem (see \cite[Theorem 18.19]{Charalambo2006}) we obtain the desired result.
\end{proof}

\subsection{Equivalent formulation of the problem}
For any $\nu \in \Uc_G$ our goal is to properly characterize the value function $V_\nu$. As said before, a simple idea is that if we fix a particular $\omega \in \Omega$ and $s\in [0,T]$, one faces a (DAP) problem, so the best choice would be to take $(q_s(\omega),\phi_s(\omega)) = (q^\star,\phi^\star)(Q^{\nu}_s(\omega),D^\nu_s(\omega))$. This reasoning is not entirely correct because we need to satisfy  $(Q^{\nu}_s(\omega),D^\nu_s(\omega))\in K$ to have $(q^\star,\phi^\star)$ well defined. Nonetheless, we know that with probability one this will be the case. We have then the following result.

\begin{proposition}\label{prop:LPuniqueness}
    Let Assumption \ref{assump:1} hold true and consider $\nu \in \Uc_G$. Then Problem \eqref{prob:nu-subproblem} has at least one solution. Moreover, a feasible pair $(\tilde{q},\tilde{\phi})$ is an optimal solution of \eqref{prob:nu-subproblem} if and only if
    \begin{align*}
        \lambda \otimes \P \left[ (\tilde{q},\tilde{\phi}) = (q^\star,\phi^\star)(Q^\nu,D^\nu) \right] = 1.
    \end{align*}
    Finally, one has that
    \begin{align*}
        V_\nu = \E \left[ \int_0^T \sum_{i=1}^N c^i (q^{\star,i}(Q^\nu_s ,D^\nu_s) , Q^{\nu,i}_s) \textnormal{d}s \right].
    \end{align*}
\end{proposition}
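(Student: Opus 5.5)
The plan is to build the candidate optimizer pathwise from the (DAP) solution map and then compare it with any feasible pair through a pointwise inequality. Since $\nu \in \Uc_G$, there is a $\P$-null set $N_0$ such that $(Q^\nu_s(\omega),D^\nu_s(\omega)) \in K$ for all $s\in[0,T]$ and $\omega\notin N_0$. Fix any element $(Q_0,D_0)\in K$, which exists because $\Uc_G\neq\emptyset$. Define
\begin{align*}
(\hat q_s,\hat\phi_s) := (q^\star,\phi^\star)\big(\Pi(Q^\nu_s,D^\nu_s)\big),
\end{align*}
where $\Pi(Q,D)=(Q,D)$ if $(Q,D)\in K$ and $\Pi(Q,D)=(Q_0,D_0)$ otherwise. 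Since $K$ is closed (Proposition~\ref{prop:Kcloseandconvex}), $\Pi$ is Borel. Hence, by Proposition~\ref{prop:measurability(q,phi)}, $(\hat q,\hat\phi)$ is the composition of a Borel map with the predictable process $(Q^\nu,D^\nu)$, and so it is $\F$-predictable. It agrees with $(q^\star,\phi^\star)(Q^\nu,D^\nu)$ outside $N_0$. By construction of $(q^\star,\phi^\star)$ it satisfies \eqref{const:supply>demand}--\eqref{const:capacity-const} for all $s$, $\P$-a.s., so $(\hat q,\hat\phi)\in\Ac_\nu$. The cost integrand is nonnegative and the integral is finite, since $q^\star\le Q^\nu$, $c^i$ is increasing, and under integrability of $c^i(Q^{\nu,i}_s,Q^{\nu,i}_s)$ we get $V_\nu<\infty$. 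If that integrability fails, both sides are $+\infty$ and the argument below is unchanged.

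The key step is the pointwise comparison. Take any $(q,\phi)\in\Ac_\nu$. For every $(s,\omega)$ outside a $\lambda\otimes\P$-null set, $(q_s(\omega),\phi_s(\omega))$ is feasible for \eqref{prob:DAPQD} with data $(Q^\nu_s(\omega),D^\nu_s(\omega))$. Therefore
\begin{align*}
\sum_i c^i(q^i_s,Q^{\nu,i}_s)\ \ge\ \sum_i c^i(\hat q^i_s,Q^{\nu,i}_s).
\end{align*}
Integrating (Tonelli, nonnegative integrands) gives that $(\hat q,\hat\phi)$ is optimal. This proves existence and the formula for $V_\nu$. The "if" direction also follows: if $(\tilde q,\tilde\phi)=(\hat q,\hat\phi)$ $\lambda\otimes\P$-a.e., the costs coincide.

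For the "only if" direction, let $(\tilde q,\tilde\phi)$ be optimal. Then the nonnegative function
\begin{align*}
f(s,\omega):=\sum_i c^i(\tilde q^i_s,Q^{\nu,i}_s)-\sum_i c^i(\hat q^i_s,Q^{\nu,i}_s)
\end{align*}
has zero integral, so $f=0$ $\lambda\otimes\P$-a.e. when $V_\nu<\infty$. At such points $(\tilde q_s,\tilde\phi_s)(\omega)$ is feasible for the (DAP) and attains its optimal value, so it is a minimizer. Proposition~\ref{prop:DAPuniqueness} then gives $(\tilde q_s,\tilde\phi_s)(\omega)=(q^\star,\phi^\star)(Q^\nu_s,D^\nu_s)(\omega)$.

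I expect the main obstacle to be measurability, not the optimization. One must check that the almost-sure constraint "for all $s$, $\P$-a.s." gives a full-measure set for $\lambda\otimes\P$, which holds since the exceptional set is contained in $[0,T]\times N_0$. One must also check that the set $\{(\tilde q,\tilde\phi)=(q^\star,\phi^\star)(Q^\nu,D^\nu)\}$ is measurable; this follows from the joint measurability of predictable processes and Proposition~\ref{prop:measurability(q,phi)}. A last caveat is the case $V_\nu=+\infty$, where the zero-integral argument breaks down. There I would either impose finiteness through the standing integrability assumption or state the characterization under $V_\nu<\infty$.
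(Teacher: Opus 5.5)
Your proposal is correct and follows essentially the paper's route. You extend $(q^\star,\phi^\star)$ measurably off $K$ (the paper sets it to $0$, you use a fixed point of $K$), compose with $(Q^\nu,D^\nu)$, compare pathwise with any feasible pair via (DAP) optimality and the uniqueness of Proposition~\ref{prop:DAPuniqueness}, and integrate over $\lambda\otimes\P$; the paper's strict inequality on the set where the pairs differ is the same argument as your ``nonnegative with zero integral, hence zero a.e.''. Your caveat about $V_\nu<\infty$ is well taken: the paper's passage from a pointwise strict inequality to a strict inequality of expectations tacitly needs the same finiteness, and if $V_\nu=+\infty$ every feasible pair is optimal, so the ``only if'' direction can fail.
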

\begin{proof}
    Since $\nu \in \Uc_G$, define $\tilde{\Omega}:=\{ \omega\in\Omega: G(Q^\nu_s(\omega),D^\nu_s(\omega)) \geq 0, \forall s\in [0,T]\}$ such that $\P(\tilde\Omega)=1$. By using Proposition \ref{prop:Kcloseandconvex} and Proposition \ref{prop:measurability(q,phi)} one may extend $(q^\star,\phi^\star)(\cdot,\cdot)$, which is initially defined over $K$, to a measurable function in the complete space $\R^N\times\R^N$.\footnote{For instance, we define $(q^\star,\phi^\star)(Q,D)=0$ for every $(Q,D)\not\in K$.} We define then the process
    \begin{align*}
        (q^\star_s(\omega),\phi^\star_s (\omega)) := (q^\star,\phi^\star)(Q^{\nu}_s(\omega), D^\nu_s(\omega)),
    \end{align*}
    which satisfies $(q^\star,\phi^\star) \in \Ac$ due to the Borel mesurability. Moreover, $(q^\star,\phi^\star)$ is feasible for \eqref{prob:nu-subproblem} because the constraints are satisfied within $\tilde{\Omega}$. Let us now take an arbitrary $(q',\phi')\in\Ac_\nu$ and such that
        $\lambda \otimes \P [(q',\phi') \not = (q^\star,\phi^\star)]>0$.
    Define now $A = \left\{ (s,\omega)\in [0,T] \times \Omega  : (q^\star_s(\omega), \phi^\star_s(\omega) )\not = (q'_s(\omega),\phi'_s(\omega))\right\}$. We assume without loss of generality that\footnote{It is easy to check that the left-hand side set is equal to $\tilde\Omega$. We assume thus that the right-hand side set is $\tilde\Omega$.}
    \begin{align*}
     \tilde\Omega =  \left\{ \omega \in \Omega : T^i(q^\star_s(\omega),\phi^\star_s(\omega),D^\nu_s(\omega)) \geq 0, \forall i=1,\dots N\right\} =  \left\{ \omega \in \Omega : T^i(q'_s(\omega),\phi'_s(\omega),D^\nu_s(\omega)) \geq 0, \forall i=1,\dots N\right\}.
    \end{align*}
    If this is not the case, it is enough to do the rest of the proof in the intersection of both sets. Define $\hat{A} := A \cap (\tilde{\Omega} \times [0,T])$. Due to the uniqueness of minimizer proved in Proposition \ref{prop:DAPuniqueness} one has 
    \begin{align*}
        \sum_{i=1}^N c^i((q^\star)_s^i(\omega),Q^\nu_s(\omega))< \sum_{i =1}^N c^i (q'_s(\omega),Q^\nu_s(\omega)), ~\forall (s,\omega)\in\hat{A}.
    \end{align*}
    Now by taking expectation
    \begin{align*}
        \E_{\lambda\otimes\P} \left[ \sum_{i=1}^N c^i((q^\star)^i, Q^\nu) \right] 
        & < \E_{\lambda\otimes\P} \left[ \sum_{i=1}^N c^i((q')^i,Q^\nu) \cdot 1_{\hat{A}} \right] + \E_{\lambda\otimes\P} \left[ \sum_{i=1}^N c^i((q^\star)^i,Q^\nu) \cdot 1_{\hat{A}^c} \right]   = \E_{\lambda\otimes\P} \left[ \sum_{i=1}^N c^i((q')^i,Q^\nu)  \right],
    \end{align*}
and by Fubini's theorem (see \cite[Theorem 6.2.1]{san2018teoria}), one has that
\begin{align*}
    \E\left[ \int_0^T \sum_{i=1}^N c^i((q^\star)^i_s, Q^\nu_s)  \text{d}s\right] < \E\left[ \int_0^T \sum_{i=1}^N c^i((q')^i_s, Q^\nu_s)  \text{d}s\right].
\end{align*}
Conversely, if we consider a feasible $(q', \phi')$ such that $\lambda \otimes \P[(q^\star,\phi^\star) \not = (q',\phi')] = 0$, then the objective values of $(q', \phi')$ and $(q^\star,\phi^\star)$ are the same, therefore $(q', \phi')$ is a solution to \eqref{prob:nu-subproblem}.
\end{proof}

The main consequence of Proposition \ref{prop:LPuniqueness}, is that we can forget about the planning control $(q,\phi)$ involved in Problem \eqref{prob:generalprob} and we can work directly with the extension of the minimizing function $(q^\star,\phi^\star)$. We define therefore the constrained problem
\begin{align*}\tag{$\tilde{P}$}\label{prob:equivalent-problem}
    \minimize_{\nu \in \Uc} ~~~& \E \left[ \int_0^T \sum_{i=1}^N c^i(q^{\star,i}(Q^\nu_s,D^\nu_s), Q^\nu_s) + h^i(\nu_s, Q^{i,\nu}_s, D^{i,\nu}_s) \text{d}s \right], \\
                     \text{s.t.}~~~  & G(Q^{\nu}_s, D^\nu_s) \geq 0, ~ \forall s \in [0,T]  ~\P\text{-a.s.}
\end{align*}
which is equivalent to \eqref{prob:generalprob}. From now on, we will focus on solving Problem \eqref{prob:equivalent-problem}\nico{, which is a state-constrained stochastic control problem associated with the closed set $K$. To the best of our knowledge, there are no existing results in the literature that characterize its value function as the unique viscosity solution to the corresponding HJB equation in the case where $K$ is neither smooth nor bounded and the controls are bounded. Since this is precisely our setting, in the next section we make a brief detour from the ISO's problem and provide the desired characterization of the value function for problems satisfying a PIC condition, which admits a natural economic interpretation in the context of electricity markets (see Remark~\ref{remark:InwardPointing}).
 }

\section{Control problem with state constraints}\label{sec:State-Constr} 

Problem \eqref{prob:equivalent-problem} is a constrained stochastic control problem. This type of problem has been studied in many ways, commonly by asking for regularity on the domain and conditions over the running cost, drift and volatility in order to get a \textit{constrained} HJB equation. In \cite{soner1986optimal, soner1986optimal2} the constrained problem is studied in the deterministic setting and the \textit{pointing inward} assumption is introduced, which essentially means that at each boundary point there is a control that makes the drift point to the interior of the domain. The stochastic case has also been studied with similar results, see for instance \cite{ishii2002class,katsoulakis1994viscosity}.

There have also been more recent results over this problem in which the controllability assumptions are relaxed, see for instance \cite{bokanowski2010reachability} in the deterministic setting and \cite{bokanowski2016state} in the stochastic setting. In the stochastic case, which is our case, the relaxation in this matter comes at the price of solving an auxiliary problem instead, and obtaining an HJB equation different from the usual one obtained in constrained optimal control. Moreover, in this approach one needs an existence condition for an unconstrained control problem (see \cite[Condition H4]{bokanowski2016state}). Such condition is not bad by itself, but in our case it becomes rather difficult to handle since we do not have much information about the extension of $(q^\star,\phi^\star)$.

As of the knowledge of the authors, the closest results to our setting are the ones in \cite{bouchard2012weak}. Therefore, in this section we adapt those results in order to apply them to our problem. In order to limit the extension of this paper, we sketch some proofs in this section rather than completely writing them. \nico{We work in a more general setting than the one introduced in the previous sections, both to simplify the notation and to improve readability. In addition, we expect that this framework and some of our results could be useful for addressing other related problems in electricity markets.}

Let $\Omega = C([0,T],\R^N)$ be the space of functions with continuous paths, equipped with the Wiener measure $\P$, and let $W$ be the canonical process defined by $W_t(\omega) = \omega_t$. Denote by $\F = (\Fc_t)_{t \in [0,T]}$ the $\P$-augmentation of the filtration generated by $W$. \nico{We denote by $\Uc$ the set of $U$-valued predictable processes, where $U$ is compact and finite-dimensional.} For $t \in [0,T]$, we set $\Uc_t = \{ \nu \in \Uc : \nu \text{ is }\F^t\text{-predictable} \}$, where $\Fc^t = (\Fc^t_s)_{s\in [0,T]}$ is chosen to be the augmentation of $\sigma(W_r - W_t, t\leq r \leq s)$; due to the independence of increments of the Brownian motion, we directly have that $\Fc^t$ is independent of $\Fc_t$. We denote $\mathcal{T}^t$ the set of $\F^t$-stopping times with values in $[t,T]$. Let $b:\R^d\times U \to \R^d, \sigma : \R^d \times U\to \R^{d,N}$ be two Lipschitz continuous functions. For each $(t,x,\nu) \in [0,T] \times \R^d \times \Uc,$
we denote by $(X^{\nu,t,x}_s)_{s \in [0,T]}$ the strong solution of the SDE
\begin{align}\label{eq:SDE estandar}
    X_s = x + \int_t^s b(X_\tau, \nu_\tau) \text{d}\tau + \int_t^s \sigma(X_\tau, \nu_{\tau}) \cdot \text{d}W_\tau,
\end{align}
where we set $X^{\nu,t,x}_r = x$ for all $r\leq t$. Under these assumptions it is well known that the strong solution is unique, see for instance \cite[Theorem 1.3.15]{pham2009continuous}. Let now $f : \R^d \times U \to \R_+$ be a continuous function uniformly Lipschitz with respect to the space variable. We define the expected cost
\begin{align*}
    J(\nu;t,x) = \E\left[ \int_t^T f(X^{\nu,t,x}_s, \nu_s) \text{d}s \right].
\end{align*}
Next, given a set $C \subseteq \R^d$ with non-empty interior, such that $\overline{(C^\circ)} = C$, we consider the value functions
\begin{align*}
    V(t,x) = \inf_{\nu \in \Uc(t,x)} J(\nu;t,x) \quad,\quad V^\circ(t,x)  =  \inf_{\nu \in \Uc^\circ (t,x)} J(\nu;t,x),
\end{align*}
where the sets of controls are given by
\begin{align*}
    \Uc(t,x) := \{ \nu \in \Uc_t : X^{\nu,t,x}_s \in C, ~\forall s \in [0,T], \P\text{-a.s.} \},\quad 
    \Uc^\circ(t,x) := \{ \nu \in \Uc_t : X^{\nu,t,x}_s \in C^\circ, ~\forall s \in [0,T], \P\text{-a.s.} \}.
\end{align*}

Due to the randomization argument, (see \cite[Remark 5.2]{bouchard2011weak}), we have that working with $\Uc_t$ or $\Uc$ does not make a difference in the value of the problems just defined. It is direct that for each $(t,x) \in [0,T] \times C^\circ$ we have that $V(t,x) \leq V^\circ(t,x)$. Moreover, due to the continuity of the trajectories of $X$, for each $(t,x) \in [0,T]\times C^c$ we have that $V(t,x) = V^\circ(t,x) = \infty$ and for each $x \in C, y\in C^\circ$ we have the terminal conditions $V(T,x) = V^\circ(T,y) = 0$.

The reason for defining both value functions is that the HJB equation associated to both problems is the same. This is a hint that under reasonable conditions, both value functions will be equal to the unique viscosity solution of the same constrained HJB equation. To reach this conclusion, we will have to prove the super- and sub-solution properties as well as a comparison principle. Let us define what we refer to as a constrained equation.

\begin{definition}
    Given $F:[0,T] \times \R^d \times \R \times \R^d \times S^d \to \R$ and a closed set $\Pi \subseteq \R^d$, we say that $\varphi$ is a \textit{constrained solution} of the equation
\[
        F(t,x,\phi,D\phi,D^2\phi) = 0,
\]
on $[0,T)\times \Pi$, if it is a viscosity supersolution on $[0,T) \times \Pi$ and a viscosity subsolution on $[0,T)\times \Pi^\circ$.
\end{definition}

\subsection{Supersolution Property.}
The supersolution property is usually the simpler one when characterizing the value function as the unique viscosity solution of the HJB equation. Roughly speaking, we can prove the property for $V$ without worrying about the boundary of $C$.  

\begin{proposition}\label{prop:PPD-sup}
    Let $(t,x) \in [0,T] \times C$ and consider a family $\{ \tau^\nu, \nu \in \Uc(t,x)\} \subseteq \mathcal{T}^t$. Let $\phi:[0,T] \times C\rightarrow\R$ be a measurable function such that $V\geq \phi$. Then one has
    \begin{align*}
        V(t,x) \geq \inf_{\nu \in \Uc(t,x)} \E \left[ \int_t^{\tau^\nu} f(X^{\nu,t,x}_s, \nu_s)\mathrm{d}s + \phi(\tau^\nu,X^{\nu,t,x}_{\tau^\nu}) \right].
    \end{align*}
\end{proposition}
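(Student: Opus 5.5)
This is the "easy half" of the weak dynamic programming principle of Bouchard and Touzi, in the constrained version of Bouchard and Nutz. The idea is to condition on $\Fc^t_{\tau^\nu}$ and use the flow property of the SDE: once restarted at time $\tau^\nu$ from $X^{\nu,t,x}_{\tau^\nu}$, the shifted control is admissible for the restarted problem.

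\textbf{Reduction.} If $\Uc(t,x)=\emptyset$, then $V(t,x)=+\infty$ and there is nothing to prove. Otherwise, fix $\nu\in\Uc(t,x)$. It suffices to show
\[
J(\nu;t,x)\ \geq\ \E\Big[\int_t^{\tau^\nu} f(X^{\nu,t,x}_s,\nu_s)\,\mathrm{d}s + \phi(\tau^\nu,X^{\nu,t,x}_{\tau^\nu})\Big],
\]
and then take the infimum over $\nu$. Since $f\geq 0$, all the integrals are well defined, possibly equal to $+\infty$.

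\textbf{Splitting and conditioning.} First split $J(\nu;t,x)$ at $\tau^\nu$ and apply the tower property with respect to $\Fc^t_{\tau^\nu}$. We need the identity
\[
\E\Big[\int_{\tau^\nu}^T f(X^{\nu,t,x}_s,\nu_s)\,\mathrm{d}s\ \Big|\ \Fc^t_{\tau^\nu}\Big](\omega) = J\big(\nu^{\tau^\nu(\omega),\omega};\tau^\nu(\omega),X^{\nu,t,x}_{\tau^\nu}(\omega)\big) \quad \P\text{-a.s.}
\]
Here $\nu^{\theta,\omega}$ is the shifted control $\tilde\omega\mapsto \nu(\omega\oplus_\theta\tilde\omega)$ obtained by concatenating paths at time $\theta$. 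This is the standard canonical-space flow property, \cite[Lemma 3.3 and Proposition 5.4]{bouchard2011weak}. It relies on the Lipschitz continuity of $b,\sigma$ (pathwise uniqueness) and on working on $\Omega=C([0,T],\R^N)$ with the Wiener measure.

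\textbf{Admissibility and conclusion.} Next we show that for $\P$-a.e.\ $\omega$ the shifted control belongs to $\Uc(\tau^\nu(\omega),X^{\nu,t,x}_{\tau^\nu}(\omega))$. By the flow property, the law of the restarted process $X^{\nu^{\theta,\omega},\theta,X_\theta(\omega)}$ equals the regular conditional law of $X^{\nu,t,x}$ given $\Fc^t_{\tau^\nu}$. The event $\{X^{\nu,t,x}_s\in C\ \forall s\}$ has probability one, so its conditional probability equals one for a.e.\ $\omega$. Hence the shifted control keeps the state in $C$ almost surely, which is exactly admissibility. This step is where the state constraint enters, and I expect it to be the main obstacle: one must check that the null set can be chosen independently of $s$ and that measurability issues are handled. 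Continuity of paths lets us reduce to countably many rational times, and closedness of $C$ then handles the rest.

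Given admissibility, the conditional expectation above is at least $V(\tau^\nu,X^{\nu,t,x}_{\tau^\nu})$, which is at least $\phi(\tau^\nu,X^{\nu,t,x}_{\tau^\nu})$ by assumption. Note that $X^{\nu,t,x}_{\tau^\nu}\in C$ a.s., so $\phi$ is evaluated on its domain. Taking expectations gives the displayed inequality. Since $\phi$ is only measurable and no semicontinuity is needed, this direction avoids the covering arguments required for the subsolution part.
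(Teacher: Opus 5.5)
Your proposal is correct and follows essentially the same route as the paper: condition at $\tau^\nu$, apply the pseudo-Markov/flow property in Lagrange form (the paper's Lemma~\ref{lemma:flow-property}, built on Claisse--Talay--Tan), and conclude with $J(\tilde\nu_\omega;\cdot)\geq V\geq\phi$. There are two small differences. The paper obtains the running-cost identity by augmenting the state with $Z$ and truncating, which your citation of the Mayer-form results of Bouchard--Touzi implicitly requires as well. You, in turn, justify the a.s.\ admissibility of the shifted control more carefully than the paper, which simply asserts it.
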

\begin{proof} It follows directly from the Lagrange form of the pseudo-Markov property, see \cite{claisse2016pseudo} and Lemma \ref{lemma:flow-property}; and using the fact that $V\geq \phi$. 
\end{proof}
Now that we have one inequality of the weak dynamic programming principle, it is only natural to characterize the value function with an HJB equation. For this, let us first introduce the \textit{Dynkin} operator
\begin{align*}
    \Lc^a (x,p,X) := b(x,a)^{\top} p + \frac{1}{2}Tr(\sigma \sigma^{\top} (x,a)X), ~~~ (a,x,p,X) \in U\times\R^d\times\R^d\times S^d.
\end{align*}
As is common in stochastic control, we do not expect the HJB equation to have a classical solution. For this reason, we ask for a weaker notion. For a definition and further discussion of viscosity solutions, we refer to the user's guide \cite{crandall1992user}. In the next results, for a given function $\varphi : [0,T]\times C\to \R$, we denote by $\varphi_\star$ and $\varphi^\star$ its lower-semicontinuous and upper-semicontinuous envelopes respectively.
\begin{proposition}\label{prop:SupersolutionProperty}
    Assume that $V$ is locally bounded on $(0,T) \times C$. Then the function $V_\star$ is a viscosity supersolution on $[0,T) \times C$ of
    \begin{align*}
        -\partial_t \varphi + \sup_{u \in U} \{ - \Lc^u (x,D\varphi, D^2\varphi) - f(x,u) \} =0.
    \end{align*}
\end{proposition}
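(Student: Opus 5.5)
We argue by contradiction, following the standard route of Bouchard and Touzi adapted to the constrained setting as in Bouchard and Nutz, using only the ``easy'' half of the weak dynamic programming principle, Proposition \ref{prop:PPD-sup}. Let $(t_0,x_0)\in[0,T)\times C$ and let $\varphi$ be a smooth test function such that $(t_0,x_0)$ is a strict minimizer of $V_\star-\varphi$ on $[0,T)\times C$, with $V_\star(t_0,x_0)=\varphi(t_0,x_0)$. If the supersolution inequality fails, then for some $\eta>0$
\begin{align*}
-\partial_t\varphi(t_0,x_0)+\sup_{u\in U}\{-\Lc^u(x_0,D\varphi,D^2\varphi)-f(x_0,u)\}<-2\eta .
\end{align*}
Since $U$ is compact and $b,\sigma,f$ are continuous, this yields a neighbourhood $B_r:=[t_0,t_0+r)\times B(x_0,r)$ on which
\begin{align*}
-\partial_t\varphi(t,x)-\Lc^u(t,x,D\varphi,D^2\varphi)-f(x,u)\le -\eta \qquad\text{for every } u\in U,
\end{align*}
uniformly in $u$. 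The uniformity is the one place where compactness of $U$ is used. We also shrink $r$ so that $t_0+r<T$.

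By strictness of the minimum and the definition of $V_\star$, there is $\delta>0$ with $V\ge V_\star\ge\varphi+\delta$ on the relative parabolic boundary $\partial_p B_r\cap([0,T]\times C)$. We then choose $(t_n,x_n)\in B_r\cap([0,T)\times C)$ with $(t_n,x_n)\to(t_0,x_0)$ and $V(t_n,x_n)\to V_\star(t_0,x_0)$; local boundedness of $V$ ensures these values are finite. For $\nu\in\Uc(t_n,x_n)$, set $\tau^\nu_n$ to be the first exit time of $(s,X^{\nu,t_n,x_n}_s)$ from $B_r$. This is an $\F^{t_n}$-stopping time, and by admissibility the path stays in $C$, so $X_{\tau^\nu_n}\in C\cap\partial_pB_r$. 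We then apply Proposition \ref{prop:PPD-sup} with the measurable function
\begin{align*}
\phi:=(\varphi+\delta)\mathbf 1_{\partial_pB_r}+(-\infty\text{ or a suitable lower bound})\mathbf 1_{\text{else}} .
\end{align*}
More simply, since the Proposition only needs $V\ge\phi$ at the stopping points, we take $\phi=\varphi+\delta$ on $\partial_pB_r\cap C$ and $\phi=\min(V,\varphi+\delta)$ elsewhere, truncated to stay measurable. The main technical care is measurability of $\phi$, which we handle by using $\varphi+\delta$ on the closed boundary set and a constant lower bound of $V$ elsewhere; $V\ge0$ because $f\ge0$. This gives
\begin{align*}
V(t_n,x_n)\ge\inf_{\nu}\E\Big[\int_{t_n}^{\tau^\nu_n}f(X_s,\nu_s)\,ds+\varphi(\tau_n^\nu,X_{\tau^\nu_n})\Big]+\delta .
\end{align*}

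Next we apply Itô's formula to $\varphi(s,X_s)$ between $t_n$ and $\tau^\nu_n$. The stochastic integral is a true martingale because $D\varphi$ and $\sigma$ are bounded on the bounded set $B_r$ before exit. Using the inequality valid on $B_r$,
\begin{align*}
\E\Big[\int_{t_n}^{\tau}f\,ds+\varphi(\tau,X_\tau)\Big]\ge\varphi(t_n,x_n)+\eta\,\E[\tau-t_n]\ge\varphi(t_n,x_n),
\end{align*}
uniformly in $\nu$. Hence $V(t_n,x_n)\ge\varphi(t_n,x_n)+\delta$. Letting $n\to\infty$ and using the continuity of $\varphi$ gives $V_\star(t_0,x_0)\ge\varphi(t_0,x_0)+\delta$, which contradicts $V_\star(t_0,x_0)=\varphi(t_0,x_0)$. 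If $\Uc(t_n,x_n)=\emptyset$, then $V=+\infty$ there, which is excluded by local boundedness (or the inequality holds trivially).

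The expected obstacle is points $x_0\in\partial C$. There we rely on the facts that only admissible controls enter the infimum, so paths never leave $C$, and that the test-function minimum is taken relative to $C$; the boundary of $C$ thus causes no issue for the supersolution property. The time-$0$ endpoint is handled identically, with one-sided neighbourhoods in time.
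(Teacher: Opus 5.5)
Your argument is correct and follows essentially the paper's route: the paper simply defers to the proof of \cite[Theorem 4.2 (i)]{bouchard2012weak} recast in Lagrange form, which is exactly what you do. That means the easy half of the weak dynamic programming principle (Proposition~\ref{prop:PPD-sup}) applied at exit times from a small neighbourhood, Itô's formula, and a strict-minimum contradiction, with admissibility keeping the exit points in $C$ so that boundary points of $C$ need no separate treatment.

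One small fix is needed: take the neighbourhood two-sided in time, e.g.\ $(t_0-r,t_0+r)\times B(x_0,r)$, and one-sided only when $t_0=0$. A sequence with $V(t_n,x_n)\to V_\star(t_0,x_0)$ may require $t_n<t_0$, and the rest of your argument goes through unchanged.
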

\begin{proof}
The proof is analogous to the one of \cite[Theorem 4.2 (i)]{bouchard2012weak}, adapting it into a Lagrange formulation.
\end{proof}

\subsection{Subsolution property}
To prove the subsolution property one needs a feasible control to exist for the problem with open and closed set constraints, 
\emph{i.e.}, $\Uc^\circ(t,x) \not = \emptyset$ for each $(t,x) \in [0,T] \times C^\circ$, and $\Uc (t,x) \not = \emptyset$ for each $(t,x) \in [0,T] \times C$. Moreover, one needs such control to allow to switch to another admissible control in a measurable way. One condition that guarantees this property is the following.

\begin{assumption}\label{assump:lipschitz-control} 
    There exist two Lipschitz continuous mappings $\tilde{u} : C^\circ \to U$ and $\hat{u} : C \to U$ such that for all $(t,x) \in [0,T] \times C^\circ$, the solution $\tilde{X}^{t,x}$ of
    \begin{align*}
        \tilde{X}_s  = x + \int_t^s b(\tilde{X}_r,\tilde{u}(\tilde{X}_r)) \textnormal{d}r  + \int_t^s \sigma(\tilde{X}_r,\tilde{u}(\tilde{X}_r)) \cdot \textnormal{d}W_r,
    \end{align*}
    satisfies $\tilde{X}^{t,x}_s \in C^\circ$ for all $s\in [0,T], \P\text{-a.s.}$, and, for all $(s,y) \in [0,T]\times C$, the solution $\hat{X}^{t,x}$ of 
    \begin{align*}
        \hat{X}_s  = x + \int_t^s b(\hat{X}_r,\hat{u}(\hat{X}_r)) \textnormal{d}r  + \int_t^s \sigma(\hat{X}_r,\hat{u}(\hat{X}_r)) \cdot \textnormal{d}W_r
        ,
    \end{align*}
    satisfies $\hat{X}^{t,x}_s \in C$ for all $s\in [0,T], \P\text{-a.s.}$
\end{assumption}

We only use the first part of the Assumption to prove the subsolution property. The second part will be useful for the estimates needed for the value function.

\begin{proposition}\label{prop:PPD-inf}
    Let Assumption \ref{assump:lipschitz-control} hold true. For any $(t,x) \in [0,T] \times C^{\circ}$, for any $B \subseteq [0,T] \times C^\circ$ open neighborhood of $(t,x)$, for any $\nu \in \Uc_t$ and for any continuous function $\phi: \overline{B} \to \R$ satisfying $V^\circ \leq \phi$ on $\overline{B}$, we have 
    \begin{align*}
        V^\circ(t,x) \leq \E \left[ \int_t^{\tau^\nu} f(X^{\nu,t,x}_r , \nu_r) \mathrm{d}r + \phi(\tau , X^{\nu,t,x}_{\tau^\nu}) \right],
    \end{align*}
    where $\tau^\nu$ is the first exit time of $(s,X^{\nu,t,x}_s)_{s\geq t}$ from $B$.
\end{proposition}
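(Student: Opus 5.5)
This is the subsolution half of the weak dynamic programming principle for the open-constraint problem, in the spirit of Bouchard and Touzi \cite{bouchard2011weak} and Bouchard and Nutz \cite{bouchard2012weak}. The plan is to construct $\varepsilon$-optimal controls on a countable partition of $\overline{B}$ near the exit position and paste them after $\tau^\nu$. A subtlety comes first: for the right-hand side to be meaningful we need $\nu\in\Uc^\circ$ up to $\tau^\nu$. Since $B\subseteq[0,T]\times C^\circ$ and $\tau^\nu$ is the exit time from $B$, the path $X^{\nu,t,x}$ stays in $C^\circ$ on $[t,\tau^\nu)$, and $(\tau^\nu,X_{\tau^\nu})\in\overline{B}$. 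The pasted control only needs to keep the state in $C^\circ$ after $\tau^\nu$.

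\textbf{Step 1: local $\varepsilon$-optimal controls with uniform control of the error.} Fix $\varepsilon>0$. For each $(s,y)\in\partial B\cap([0,T]\times C^\circ)$, note that $\Uc^\circ(s,y)\neq\emptyset$ by Assumption \ref{assump:lipschitz-control} (take the feedback $\tilde u$). Choose $\nu^{s,y}\in\Uc^\circ(s,y)$, taken $\F^s$-predictable by the randomization remark, with $J(\nu^{s,y};s,y)\le V^\circ(s,y)+\varepsilon\le\phi(s,y)+\varepsilon$.

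Next we need a neighbourhood on which this control remains admissible and nearly optimal. Here the Lipschitz feedback is used: instead of $\nu^{s,y}$ itself, use the control that follows $\nu^{s,y}$ while the state stays in a small tube around $X^{\nu^{s,y},s,y}$, and otherwise switches to $\tilde u$. Alternatively, following \cite[Section 3]{bouchard2012weak}, use the lower semicontinuity of $(s',y')\mapsto J(\nu^{s,y};s',y')+$ the admissibility set. Concretely, I would show that the map $(s',y')\mapsto J(\tilde\nu;s',y')$ is upper semicontinuous at $(s,y)$ for the modified control $\tilde\nu$. This follows from the Lipschitz continuity of $b,\sigma,f$, standard $L^2$ flow estimates, and the continuity of $\phi$. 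Admissibility in $C^\circ$ is preserved because after switching, $\tilde u$ keeps the state in $C^\circ$.

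This yields an open ball $B_{s,y}$ such that for all $(s',y')\in B_{s,y}\cap\overline{B}$ with $s'\ge s$ (time-shifted appropriately) we have
\[
J(\hat\nu^{s,y};s',y')\le\phi(s',y')+3\varepsilon .
\]
By Lindelöf, extract a countable subcover $(B_n)$ and disjoint Borel sets $A_n\subseteq B_n$ partitioning $\partial B$ (or $\overline{B}$).

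\textbf{Step 2: pasting.} Define
\[
\bar\nu:=\nu 1_{[t,\tau^\nu]}+1_{(\tau^\nu,T]}\sum_n \hat\nu^{n}1_{\{(\tau^\nu,X_{\tau^\nu})\in A_n\}}.
\]
This is predictable since $\tau^\nu$ is a stopping time and the events are $\Fc_{\tau^\nu}$-measurable. Check that $\bar\nu\in\Uc^\circ(t,x)$: the state lies in $C^\circ$ before $\tau^\nu$ by the definition of $B$, and after $\tau^\nu$ by the construction in Step 1. Then apply the pseudo-Markov (flow) property, Lemma \ref{lemma:flow-property} with \cite{claisse2016pseudo}, conditioning on $\Fc_{\tau^\nu}$:
\[
V^\circ(t,x)\le J(\bar\nu;t,x)=\E\Big[\int_t^{\tau^\nu}f\,dr+J(\hat\nu^{n};\tau^\nu,X_{\tau^\nu})\Big]\le\E\Big[\int_t^{\tau^\nu}f\,dr+\phi(\tau^\nu,X_{\tau^\nu})\Big]+3\varepsilon .
\]
Letting $\varepsilon\to0$ gives the claim. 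Integrability of $\phi(\tau^\nu,X_{\tau^\nu})$ holds because $\phi$ is continuous on the compact set $\overline{B}$ (restrict $B$ to be bounded if needed) and $f\ge0$.

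\textbf{Main obstacle.} The delicate point is Step 1: obtaining, uniformly on a neighbourhood, both the state-constraint admissibility in the open set $C^\circ$ and the upper semicontinuity of the cost for a fixed control. A fixed control $\nu^{s,y}$ started from a nearby point may leave $C^\circ$ with positive probability. I would handle this by the switching construction: run $\nu^{s,y}$ until the first time the state comes within a distance $\delta$ of $\partial C$, then switch to the feedback $\tilde u$. The extra cost is made small by using $L^2$ flow estimates to bound the probability of switching, via Lipschitz dependence on the initial condition. This requires $\delta$ to be chosen so that the reference trajectory stays at distance greater than $2\delta$ from $\partial C$ with probability close to one. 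That choice is possible because the reference trajectory is continuous and lies in $C^\circ$ almost surely on the compact interval $[s,T]$.
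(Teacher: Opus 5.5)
Your proposal is correct in its overall strategy, but it takes a different route from the paper. The paper does no hands-on construction. It adds the running cost as a state variable, $Z_s=\int_t^s f(X^{\nu,t,x}_r,\nu_r)\,\mathrm{d}r$, which turns the problem into a Mayer problem with terminal payoff $Z_T$ and open constraint set $C^\circ\times\R$. It then invokes \cite[Lemma 4.9 (ii)]{bouchard2012weak}, and Assumption \ref{assump:lipschitz-control} serves only to meet the hypotheses of that lemma. You instead reprove the lemma directly in Lagrange form. You use Lemma \ref{lemma:flow-property} for the conditioning step, and you use the Lipschitz feedback $\tilde u$ explicitly, through the ``switch when $d(X,C^c)\le\delta$'' modification, to get admissibility in $C^\circ$ and near-optimality uniformly on a neighbourhood of each point. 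The paper's route is much shorter and hands the measurability, covering and pasting issues to Bouchard--Nutz. However, it leaves implicit that the augmented problem (unbounded $z$-direction, neighbourhood $B\times\R$, test function $z+\phi(t,x)$) fits their setting. Your route is self-contained and shows exactly where the feedback assumption enters.

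When you write it out, three points need to be made precise.

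\textbf{Restarting at $\tau^\nu$.} The control pasted after $\tau^\nu$ must be restarted at $\tau^\nu$. If you paste a fixed $\hat\nu^n\in\Uc_{s_n}$ at a time $\tau^\nu(\omega)>s_n$, the conditional control $\tilde{\bar\nu}_\omega$ of Lemma \ref{lemma:flow-property} still depends on $\omega$ on $[s_n,\tau^\nu(\omega)]$, so its cost is not $J(\hat\nu^n;\tau^\nu,X_{\tau^\nu})$. Two fixes are available.
\begin{itemize}
\item Shift $\hat\nu^n$ by $\tau^\nu-s_n$ so that it acts on $W_{\tau^\nu+\cdot}-W_{\tau^\nu}$. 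This is legitimate because $b,\sigma,f$ do not depend on time. Since $f\ge0$, the shortened horizon only lowers the cost, so you need continuity in $y$ only at the fixed time $s_n$. This requires $\tau^\nu\ge s_n$ on $A_n$.
\item Alternatively, use the left neighbourhoods $(s-r,s]\times B(y,r)$ of \cite{bouchard2011weak}.
\end{itemize}
In both cases the time neighbourhoods are half-open, so Lindel\"of must be applied in the corresponding half-open-interval topology.

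\textbf{State dependence of the switching control.} The switching control depends on the starting point. In the pasting it therefore has to be defined through the post-$\tau^\nu$ state: run the restarted $\nu^n$ until $d(X,C^c)\le\delta_n$, then follow $\tilde u$. Assumption \ref{assump:lipschitz-control} must then be applied from the random initial condition $(\rho,X_\rho)$, via the strong Markov property. Your first ``tube'' variant does not by itself keep the state in $C^\circ$; the distance-to-boundary version is the one that works.

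\textbf{No need to shrink $B$.} You may not shrink $B$, since $\tau^\nu$ depends on it. You do not need to: $\phi\ge V^\circ\ge0$ and every term is nonnegative, so all expectations are well defined in $[0,\infty]$.
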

\begin{proof} Since Assumption \ref{assump:lipschitz-control} holds true, we use a Mayer type reformulation of the problem and apply \cite[Lemma 4.9 (ii)]{bouchard2012weak} in order to get the desired result.
\end{proof}

\begin{proposition}\label{prop:SubsolutionProperty}
    Let Assumption \ref{assump:lipschitz-control} hold true and assume that $V^\circ$ is locally bounded on $[0,T) \times C^\circ$. Then $(V^\circ)^{\star}$ is a subsolution on $[0,T) \times C^\circ$ of 
    \begin{align*}
        - \partial_t \varphi + \sup_{u \in U} \left\{ - \Lc^u(x,D\varphi, D^2\varphi) - f(x,u) \right\} = 0.
    \end{align*}
\end{proposition}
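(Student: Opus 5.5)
The argument is by contradiction, using the weak dynamic programming inequality of Proposition \ref{prop:PPD-inf} in the standard way (as in \cite[Theorem 4.2 (ii)]{bouchard2012weak}, rewritten for a running cost). Fix $(t_0,x_0)\in[0,T)\times C^\circ$ and a smooth test function $\varphi$ such that $(V^\circ)^\star-\varphi$ attains a strict local maximum, equal to $0$, at $(t_0,x_0)$. Suppose the subsolution inequality fails there. Then there is some $u_0\in U$ with
\begin{align*}
-\partial_t\varphi(t_0,x_0)-\Lc^{u_0}(x_0,D\varphi(t_0,x_0),D^2\varphi(t_0,x_0))-f(x_0,u_0)>0 .
\end{align*}
Since $b$, $\sigma$ and $f$ are continuous and $\varphi$ is smooth, the same strict inequality, with margin at least some $\eta>0$, holds at every $(s,y)$ in a small open ball $B\subseteq[0,T)\times C^\circ$ around $(t_0,x_0)$. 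The ball can be taken inside $C^\circ$ because $x_0$ is an interior point.

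Next we shrink $B$ so that the strict maximum yields a gap on the boundary: $(V^\circ)^\star\le\varphi-\gamma$ on $\partial B$ for some $\gamma>0$. Local boundedness of $V^\circ$ makes $(V^\circ)^\star$ finite near $(t_0,x_0)$, so this is legitimate. We then choose points $(t_n,x_n)\in B$ with $(t_n,x_n)\to(t_0,x_0)$ and $V^\circ(t_n,x_n)\to(V^\circ)^\star(t_0,x_0)=\varphi(t_0,x_0)$. Along this sequence the error $\delta_n:=V^\circ(t_n,x_n)-\varphi(t_n,x_n)$ tends to $0$.

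Now apply Proposition \ref{prop:PPD-inf} at $(t_n,x_n)$. Take the constant control $\nu\equiv u_0$, which lies in $\Uc_{t_n}$, and the continuous function $\phi:=\varphi-\gamma$ on $\overline B$. This choice is admissible only if $V^\circ\le\varphi-\gamma$ on all of $\overline B$, which does not hold inside $B$. We handle this by using $\phi:=\varphi-\gamma\mathbf 1_{\partial B}$ in spirit. Concretely, we take $\phi=\varphi-\gamma\,\theta$ with $\theta$ continuous, $\theta=1$ on $\partial B$, and $V^\circ\le\varphi$ still holding near the center. Alternatively we use the version of \cite[Lemma 4.9]{bouchard2012weak} in which $\phi$ only needs to dominate on the exit region. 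This technical point is where I expect the main friction. Proposition \ref{prop:PPD-inf} as stated requires $V^\circ\le\phi$ on $\overline B$, and we only control $V^\circ\le\varphi$ locally. So we first replace $\varphi$ by $\varphi+\gamma'$ and shrink $B$ so that $V^\circ\le(V^\circ)^\star\le\varphi$ on $\overline B$, while the strict maximum still gives $V^\circ\le\varphi-\gamma$ on $\partial B$. We then pick $\phi$ continuous with $\phi=\varphi-\gamma$ on $\partial B$ and $\phi\ge V^\circ$ on $\overline B$, for instance $\phi=\max(\varphi-\gamma,\,(V^\circ)^\star)$ smoothed from above. Because the exit point $(\tau,X_\tau)$ lies on $\partial B$ by path continuity, only the values of $\phi$ on $\partial B$ enter the estimate.

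With $\tau_n$ the exit time of $(s,X^{u_0,t_n,x_n}_s)$ from $B$, this gives
\begin{align*}
\varphi(t_n,x_n)+\delta_n\le \E\Big[\int_{t_n}^{\tau_n}f(X_r,u_0)\,\mathrm dr+\varphi(\tau_n,X_{\tau_n})\Big]-\gamma .
\end{align*}
Apply It\^o's formula to $\varphi(s,X_s)$ on $[t_n,\tau_n]$; the stochastic integral has zero expectation since the integrand is bounded on $\overline B$. Using the margin $\eta$ on the drift term, we get
\begin{align*}
\E\big[\varphi(\tau_n,X_{\tau_n})\big]+\E\Big[\int_{t_n}^{\tau_n}f(X_r,u_0)\,\mathrm dr\Big]\le\varphi(t_n,x_n)-\eta\,\E[\tau_n-t_n]\le\varphi(t_n,x_n).
\end{align*}
Combining the two displays yields $\delta_n\le-\gamma$ for every $n$, which contradicts $\delta_n\to0$. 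Hence $(V^\circ)^\star$ is a viscosity subsolution on $[0,T)\times C^\circ$.

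Assumption \ref{assump:lipschitz-control} enters only through Proposition \ref{prop:PPD-inf}: it guarantees the measurable concatenation with an admissible interior-preserving control after $\tau_n$. The state constraint plays no other role, because $B$ lies strictly inside $C^\circ$.
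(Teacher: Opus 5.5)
Your proposal is correct and follows the same route as the paper, which simply defers to the proof of \cite[Theorem 4.2 (ii)]{bouchard2012weak} rewritten in Lagrange form. That route is the weak DPP of Proposition \ref{prop:PPD-inf} with a constant control, a strict gap of $(V^\circ)^\star-\varphi$ on $\partial B$, It\^o's formula, and a contradiction along a sequence realizing the upper envelope. Of your variants for the test function in Proposition \ref{prop:PPD-inf}, the clean one is $\phi=\varphi-\gamma\theta$. Here $\theta$ is a Urysohn function supported in the open neighbourhood of $\partial B$ where $(V^\circ)^\star<\varphi-\gamma$; this neighbourhood exists by upper semicontinuity if you arrange $\max_{\partial B}((V^\circ)^\star-\varphi)=-2\gamma$. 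The shift $\varphi+\gamma'$ and the ``smoothing from above'' are unnecessary.
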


\begin{proof}
Again, the proof is analogous to the one of \cite[Theorem 4.2 (ii)]{bouchard2012weak}, adapting it into a Lagrange formulation.
\end{proof}

\subsection{Link between both value functions}

Note that from the definition of both value functions, one has that $V(t,x) \leq V^\circ(t,x)$. Knowing that both value functions are supersolution and subsolution of the same constrained HJB equation, we can obtain the remaining inequality (and thus the equality of the value functions) from a comparison result. Nonetheless, the comparison result is not a direct result. In the literature, it is usually assumed that the functions $K,\sigma,b$ and $f$ are bounded \cite{ishii2002class} or that they have some regularity \cite{katsoulakis1994viscosity} that does not hold true in our electric market model. We will follow the ideas of \cite{bouchard2012weak}, where a regularity condition over the open value function is presented, that will allow us to establish the comparison result.


\begin{definition}\label{def:RO}
    Consider an open set $O\subseteq \R^d$ and a function $w : [0,T]\times \bar{O} \to \R$. We say that $w$ is of class $R(O)$ if for any $(t,x) \in [0,T) \times \partial O$
    \begin{itemize}
        \item[(i)] There exists $r>0$, an open neighborhood $B$ of $x$ in $\R^d$, and a function $l: \R_+ \to \R^d$ such that
        \begin{align}
           \label{ineq:defR(C)-1} &\liminf_{\varepsilon\to 0} \varepsilon^{-1} |l(\varepsilon)| < \infty,\\
           \label{in:defR(C)-2} &y+l(\varepsilon) + o(\varepsilon) \in O, ~\forall y \in O\cap B, \varepsilon \in (0,r).
        \end{align}
        \item[(ii)] There exists a function $\lambda:\R_+ \to \R_+$ such that
        \begin{align}
           \label{eq:defR(C)-3} & \lim_{\varepsilon \to 0} \lambda(\varepsilon) = 0, \\
            & \lim_{\varepsilon \to 0} w(t+\lambda(\varepsilon), x + l(\varepsilon)) = w(t,x). \textcolor{white}{-------} \label{eq:RO-continuous}
        \end{align}
    \end{itemize}
\end{definition}
The intuition behind this class of functions is that for each point at the boundary, there exists a trajectory along the interior such that the function is continuous following this trajectory. 

\begin{remark}
    This class of functions is highly dependent on the set $O$. For example, if $O$ is convex, then all continuous functions are of class $R(O)$.\footnote{This is direct by considering two points $x^1,x^2 \in O$ and taking $l:\R_+\to \R^d$ as $l(\varepsilon) = 2^{-1 }\varepsilon (x^1 + x^2) - x$ and $\lambda:\R_+ \to \R_+$ as $\lambda(\varepsilon) = \varepsilon$.}
\end{remark}

If one has a $\Cc^1$ characterization of the state constrained set, it is known that the conditions in Definition \ref{def:RO} hold true for the upper semicontinuous envelope of the open value function $V^\circ$, see \cite[Proposition 4.12]{bouchard2012weak}. However, such a smooth characterization does not hold in our setting, so we will introduce a better suited condition, which generalizes the classic \textit{inward-pointing} condition. In Remark \ref{remark:InwardPointing} we provide an economic interpretation of this assumption in the context of the electric market.

\begin{assumption}\label{assump:InwardPointing}
    For each $x \in \partial C$, there exist $u_x \in U$, $r>0$ and $\eta >0$ such that
\[
\sigma (z,u_x) = 0, \forall z \in B(x,\eta) \cap C \quad\text{and}\quad B(z+tb(z,u_x), rt) \subseteq C^\circ, \forall z \in B(x,\eta) \cap C, \forall t \in (0,r).
\]
\end{assumption}


In the next proposition, we adapt the ideas from \cite[Proposition 4.12]{bouchard2012weak} to a setting in which there is no smooth characterization of the set $C$.

\begin{proposition}\label{prop:VcircR(C°)}
    Let Assumption \ref{assump:InwardPointing} hold true. Then $(V^\circ)^\star$ is of class $R(C^\circ)$.
\end{proposition}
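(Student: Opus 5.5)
We must check both items of Definition \ref{def:RO} with $O=C^\circ$, $w=(V^\circ)^\star$, at a fixed $(t,x)\in[0,T)\times\partial C$. Let $u_x,r,\eta$ be given by Assumption \ref{assump:InwardPointing}, and set $l(\varepsilon):=\varepsilon\, b(x,u_x)$ and $\lambda(\varepsilon):=\varepsilon$. Then \eqref{ineq:defR(C)-1} and \eqref{eq:defR(C)-3} are immediate.

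\textbf{Item (i).} Take $B:=B(x,\eta/2)$ and let $L$ be the Lipschitz constant of $b$. For $y\in C^\circ\cap B$ we have $y+\varepsilon b(x,u_x)=y+\varepsilon b(y,u_x)+\delta$ with $|\delta|\le L\varepsilon|y-x|$. Assumption \ref{assump:InwardPointing} gives $B(y+\varepsilon b(y,u_x),r\varepsilon)\subseteq C^\circ$. Hence any perturbation $y+l(\varepsilon)+o(\varepsilon)$ stays in $C^\circ$ provided $L|y-x|+|o(\varepsilon)|/\varepsilon<r$. We arrange this by shrinking $B$ to radius $\min(\eta/2,r/(2L))$ and taking $\varepsilon$ small. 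Strictly, the $o(\varepsilon)$ term should be read as a fixed remainder that is eventually below $r\varepsilon/2$; this is the interpretation used in \cite{bouchard2012weak}.

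\textbf{Item (ii).} We must show $\lim_{\varepsilon\to0}(V^\circ)^\star(t+\varepsilon,x+\varepsilon b(x,u_x))=(V^\circ)^\star(t,x)$. Upper semicontinuity gives the $\limsup\le$ inequality for free, so the work is in the lower bound. Here I expect the main obstacle: $\limsup$ of values along the curve must dominate $(V^\circ)^\star(t,x)$.

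For the lower bound, fix sequences $(t_n,x_n)\to(t,x)$ in $[0,T)\times C^\circ$ with $V^\circ(t_n,x_n)\to(V^\circ)^\star(t,x)$. Fix $\varepsilon$ small and set $z_\varepsilon:=x+\varepsilon b(x,u_x)$. The strategy is to transport any near-optimal control for the point $(t+\varepsilon,z_\varepsilon)$ back to $(t_n,x_n)$, using the constant control $u_x$ on a short initial interval. Since $\sigma(\cdot,u_x)=0$ near $x$, the state under $u_x$ started at $x_n$ follows the deterministic ODE $\dot X=b(X,u_x)$. It stays in $C^\circ$ by the ball condition: iterate it on small steps, or use a Gronwall comparison with the straight line. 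At time $t_n+\varepsilon$ it reaches $X_n^\varepsilon$ with $|X_n^\varepsilon-z_\varepsilon|\le C(|x_n-x|+\varepsilon^2)$. The ball $B(z_\varepsilon,r\varepsilon/2)$ lies in $C^\circ$, so for $n$ large and $\varepsilon$ small, $X_n^\varepsilon$ lies deep inside this ball around $z_\varepsilon$. The same holds with the time shift $t_n+\varepsilon$ versus $t+\varepsilon$.

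By the interior dynamic programming inequality (Proposition \ref{prop:PPD-inf}) with a continuous majorant, or directly by concatenation, we get
\[
V^\circ(t_n,x_n)\le \varepsilon\sup_{B(x,\eta)} f(\cdot,u_x)+ V^\circ(t_n+\varepsilon,X_n^\varepsilon).
\]
Letting $n\to\infty$ gives $(V^\circ)^\star(t,x)\le C\varepsilon+\limsup_{(s,y)\to(t+\varepsilon,z_\varepsilon)}V^\circ(s,y)=C\varepsilon+(V^\circ)^\star(t+\varepsilon,z_\varepsilon)$, because $X_n^\varepsilon\to X^\varepsilon$, which is $z_\varepsilon+O(\varepsilon^2)$. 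This is the delicate point: we need the limit point to be exactly on the curve $z_\varepsilon$. To fix it, define $l(\varepsilon)$ as the actual ODE flow displacement $X^{x}_{t+\varepsilon}-x$ from $x$ under $u_x$, rather than the straight line. This still satisfies \eqref{ineq:defR(C)-1}, and item (i) is preserved since the displacement is $\varepsilon b(x,u_x)+O(\varepsilon^2)$. Then $X_n^\varepsilon\to x+l(\varepsilon)$ by continuous dependence on initial data.

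Letting $\varepsilon\to0$ yields $\liminf_\varepsilon (V^\circ)^\star(t+\varepsilon,x+l(\varepsilon))\ge(V^\circ)^\star(t,x)$, which completes \eqref{eq:RO-continuous}. Local boundedness of $f$ near $x$ is used to make $C$ uniform, and the concatenation of controls is justified measurably since the first leg is deterministic.
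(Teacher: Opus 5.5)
Once you take $l(\varepsilon)$ to be the time-$\varepsilon$ displacement of the flow of $\dot X=b(X,u_x)$ started at $x$, rather than the straight line, your argument is essentially the paper's proof: $\lambda(\varepsilon)=\varepsilon$, the constant control $u_x$ (where $\sigma$ vanishes) concatenated with a near-optimal control, and a closing step combining $\limsup$ with upper semicontinuity. Use direct concatenation, as the paper does, rather than Proposition \ref{prop:PPD-inf}, which would need Assumption \ref{assump:lipschitz-control}. You are also more careful than the paper on two points it leaves implicit: you shrink $B$ using the Lipschitz constant of $b$, since Assumption \ref{assump:InwardPointing} controls $z+tb(z,u_x)$ and not $z+tb(x,u_x)$, and you check that the deterministic path from an interior point stays in $C^\circ$, which the concatenated control needs in order to be admissible.
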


\begin{proof}
    Fix $(t,x) \in [0,T) \times \partial C$ and let $u_x\in U$, $r>0$ and $\eta>0$ be as in Assumption \ref{assump:InwardPointing}. Denote by $\hat{x}_z(\cdot)$ the unique solution\footnote{Existence and uniqueness follow since $b(\cdot,u_x)$ is globally Lipschitz.} of the Ordinary Differential Equation 
    \begin{align*}
        x'(s) =  b(x(s),u_x), ~x(0) = z.
    \end{align*}
    Due to the continuity of $b$, we have that, as $s\to 0$
    \begin{align}\label{eq:Proof-inward-1}
        s^{-1} \bigg| \int_0^s b(\hat{x}_x (r),u_x) \text{d}r\bigg| \to |b(x,u_x)|  \quad \text{and} \quad s^{-1} \bigg| \int_0^s \big( b(\hat{x}_x(r), u_x) - b(x, u_x) \big) \text{d}r \bigg| \to 0.
    \end{align}
    Now set $l(\varepsilon) := \hat{x}_x(\varepsilon) - x, ~~\lambda(\varepsilon) = \varepsilon$. We have $\lambda(\varepsilon) \to 0$ and, from the limit in the left of \eqref{eq:Proof-inward-1}, that $\liminf_{\varepsilon \to 0}\varepsilon^{-1}|l(\varepsilon)|< \infty$. Let us take $y \in C^\circ \cap B(x,\eta)$ and $\varepsilon>0$ small enough, then we have that
    \begin{align*}
        y + l(\varepsilon) + o(\varepsilon) &= y + \int_0^\varepsilon b(\hat{x}_x(r),u_x)\text{d}r + o(\varepsilon) \\
        &= y + \int_0^\varepsilon \big( b(\hat{x}_x(r), u_x) - b(x,u_x) \big) \text{d}r + \varepsilon b(x,u_x) + o(\varepsilon) \\
        &= y + \varepsilon b(x,u_x) + \hat{o}(\varepsilon),
    \end{align*}
    the last step by using the limit in the right of \eqref{eq:Proof-inward-1}. By taking $\hat{\varepsilon} \in (0,r)$ small enough, such that $\lVert \hat{o}(\varepsilon) \rVert \leq 2^{-1}r \varepsilon$ for each $\varepsilon \in (0,\hat{\varepsilon})$, by Assumption \ref{assump:InwardPointing}, we have that $y+ l(\varepsilon)+ o(\varepsilon) \in C^\circ$ for each $y \in C^\circ \cap B(x,\eta), \varepsilon \in (0,\hat{\varepsilon})$.
    
    Consider $(s,y) \in [0,T]\times C^\circ$ close to $(t,x)$. For $\varepsilon>0$ small enough, such that $\hat{x}_y(\xi) \in B(x,\eta)$ for each $\xi \in [0, \lambda(\varepsilon)]$, we take a control $\nu^\varepsilon$ such that
    \begin{align*}
        J(\nu^\varepsilon ; s+\lambda(\varepsilon), \hat{x}_y(\varepsilon)) \leq V^{\circ}(s+\lambda(\varepsilon), \hat{x}_y (\varepsilon)) + \varepsilon.
    \end{align*}
    Now, by setting the control
        $\tilde{\nu}^{\varepsilon} := 1_{[s,s+\lambda(\varepsilon)]} u_x + 1_{(s+ \lambda(\varepsilon), T]} \nu^\varepsilon$,
    since $\sigma(\cdot,u_x)=0$ on $B(x,\eta)$, we obtain that 
    \begin{align*}
        V^\circ(s,y) &\leq \E \left[ \int_s^{T} f(X^{\tilde{\nu}^\varepsilon,s,y}_r , \tilde{\nu}^\varepsilon_r) \text{d}r \right] \\
        & =  \int_s^{s+\lambda(\varepsilon)} f(\hat{x}_y(r), u_x)\text{d}r + \E \left[ \int_{s+ \lambda(\varepsilon)}^T f(X^{\nu^\varepsilon, s+ \lambda(\varepsilon), \hat{x}_y(\varepsilon)}) \right] \\
        & \leq o(1) + V^{\circ}(s+\lambda(\varepsilon), \hat{x}_y(\varepsilon)) + \varepsilon.
    \end{align*}
    Let us consider $\tau = \inf\{s \geq 0, \hat{x}_x(s) \in B(x,\eta)^c\}$ and let us take $\varepsilon \in (0,\tau)$. Due to the continuity of ODEs with respect to the initial data, see for instance \cite[Theorem 7.4]{avez2020differential}, we have that $\hat{x}_y(\xi) \in B(x,\eta)$ for $\eta \in [0,\varepsilon]$ for $y$ close enough to $x$.  By taking $y \to x$, we have that $\hat{x}_y(\varepsilon) \to \hat{x}_x(\varepsilon)$. Then taking $\limsup$ of $(s,y) \to (t,x)$, we have
    \begin{align*}
            (V^\circ)^\star (t,x) \leq (V^\circ)^\star(t+\lambda(\varepsilon), \hat{x}_x(\varepsilon)) + o'(1) = (V^\circ)^{\star}(t+ \lambda(\varepsilon), x + l(\varepsilon)) + o'(1).
    \end{align*}
    For $\varepsilon$ small, finally this implies that
    \begin{align*}
        (V^\circ)^\star(t,x) & \leq \liminf_{\varepsilon \to 0} (V^\circ)^{\star} (t+\lambda(\varepsilon), x + l(\varepsilon)) \leq\limsup_{\varepsilon \to 0} (V^\circ)^{\star} (t+\lambda(\varepsilon), x + l(\varepsilon)) 
         \leq (V^\circ)^\star(t,x).
    \end{align*}
    Hence we obtain the continuity condition \eqref{eq:RO-continuous} for the function $V^\circ$.
\end{proof}


\begin{proposition}\label{prop:PolynomialGrowth}
    Let Assumption \ref{assump:lipschitz-control} hold true, then both $V$ and $V^\circ$ have quadratic growth.
\end{proposition}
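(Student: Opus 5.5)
The plan is to bound both value functions from above by the cost of the explicit feasible feedback controls from Assumption \ref{assump:lipschitz-control}. Then use the standard second-moment estimate for SDEs with Lipschitz coefficients together with the Lipschitz property of $f$. Since $f\geq 0$, we have $V,V^\circ\geq 0$, so only an upper bound of the form $V(t,x)\leq C(1+|x|^2)$ on $[0,T]\times C$ and $V^\circ(t,x)\leq C(1+|x|^2)$ on $[0,T]\times C^\circ$ is needed. In fact the argument will give linear growth, which implies quadratic growth.

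\textbf{Admissible controls.} For $(t,x)\in[0,T]\times C^\circ$, consider the feedback process $\tilde\nu_s:=\tilde u(\tilde X^{t,x}_s)$, where $\tilde X^{t,x}$ solves the SDE of Assumption \ref{assump:lipschitz-control}. This SDE is well posed because $y\mapsto b(y,\tilde u(y))$ and $y\mapsto\sigma(y,\tilde u(y))$ are compositions of Lipschitz maps. The process $\tilde\nu$ is $U$-valued and $\F^t$-predictable, since it is continuous and adapted, and by pathwise uniqueness $X^{\tilde\nu,t,x}=\tilde X^{t,x}$. Assumption \ref{assump:lipschitz-control} then gives $\tilde\nu\in\Uc^\circ(t,x)$. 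Likewise $\hat\nu_s:=\hat u(\hat X^{t,x}_s)\in\Uc(t,x)$ for $(t,x)\in[0,T]\times C$. This yields $V^\circ(t,x)\leq J(\tilde\nu;t,x)$ and $V(t,x)\leq J(\hat\nu;t,x)$.

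\textbf{Moment estimate.} Since $U$ is compact and $b,\sigma$ are Lipschitz, we have $|b(y,u)|+|\sigma(y,u)|\leq L(1+|y|)$ uniformly in $u\in U$. Applying It\^o's formula or the Burkholder--Davis--Gundy inequality, followed by Gronwall's lemma, gives
\[
\E\Big[\sup_{s\in[t,T]}|X^{\nu,t,x}_s|^2\Big]\leq C_T(1+|x|^2)
\]
uniformly in $\nu\in\Uc$; see for instance \cite[Theorem 1.3.15]{pham2009continuous}. Since $f$ is continuous on $\R^d\times U$ and uniformly Lipschitz in $y$, with $U$ compact, we get $f(y,u)\leq \max_{U}f(0,\cdot)+L_f|y|\leq C(1+|y|^2)$. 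Hence
\[
J(\nu;t,x)\leq T\,C\big(1+C_T(1+|x|^2)\big)\leq C'(1+|x|^2),
\]
and applying this to $\tilde\nu$ and $\hat\nu$ concludes the argument.

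\textbf{Main obstacle.} The only delicate step is checking that the feedback-generated processes really are admissible controls in $\Uc_t$ and that the state they produce coincides with $X^{\nu,t,x}$. This requires predictability with respect to $\F^t$, which holds because $\tilde X^{t,x}$ is the strong solution driven by $W_\cdot-W_t$, together with pathwise uniqueness of the SDE. By the randomization remark, one may equally work with $\Uc$ instead of $\Uc_t$. The rest of the proof is the routine estimate described above.
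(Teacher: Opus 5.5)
Your proof is correct and follows the same route as the paper, whose proof is a single line invoking standard moment estimates for strong solutions of SDEs with Lipschitz coefficients to obtain $|V(t,x)|,|V^\circ(t,x)|\le C(1+|x|^2)$. Your version adds what the paper leaves implicit: Assumption \ref{assump:lipschitz-control} is what makes $\Uc(t,x)$ and $\Uc^\circ(t,x)$ nonempty, via the feedback controls $\hat u$ and $\tilde u$, so those uniform estimates actually bound finite value functions.
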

\begin{proof}
  We know from the standard estimates for the strong solutions of SDEs, see for instance \cite[Section 3.2.]{zhang2017backward}, the existence of constants $C_1,C_2>0$ such that
\[
|V(t,x)| \leq C_1 (1+|x|^2), ~|V^\circ(t,x)| \leq C_2 (1+|x|^2).
\]  
\end{proof}
Now that we have the comparison principle, we can state the complete result.

\begin{theorem}\label{th:HJB-constrained}
    Let Assumptions \ref{assump:lipschitz-control} and \ref{assump:InwardPointing} hold true. One has that $V = V^\circ$ on $[0,T] \times C^\circ$, $V$ is continuous and is the unique constrained viscosity solution on $[0,T) \times C$ of
    \begin{align}\label{eq:ConstrainedHJB}
        - \partial_t \varphi + \sup_{u\in U} \left\{ - \Lc^{u}(\cdot, D\varphi,D^2\varphi) - f(\cdot,u) \right\} = 0, \quad \varphi(T,\cdot) = 0,
    \end{align}
    in the class of the functions with polynomial growth and such that its upper semicontinuous envelope is of class $R(C^\circ)$.
\end{theorem}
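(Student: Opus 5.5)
The plan is to sandwich the two value functions between their semicontinuous envelopes and close the chain with a comparison principle adapted to the class $R(C^\circ)$, following \cite[Theorem 4.6 and Corollary 4.7]{bouchard2012weak}.

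First, Proposition \ref{prop:PolynomialGrowth} gives quadratic growth of $V$ and $V^\circ$, hence local boundedness. We can therefore apply Proposition \ref{prop:SupersolutionProperty}: $V_\star$ is a supersolution on $[0,T)\times C$. We can also apply Proposition \ref{prop:SubsolutionProperty}: $(V^\circ)^\star$ is a subsolution on $[0,T)\times C^\circ$. Proposition \ref{prop:VcircR(C°)} shows that $(V^\circ)^\star$ is of class $R(C^\circ)$. For the terminal condition, $V_\star(T,\cdot)\ge 0$ follows from $f\ge 0$. To get $(V^\circ)^\star(T,x)\le 0$ for $x\in C$, we bound $V^\circ(s,y)\le \E\int_s^T f(\tilde X^{s,y}_r,\tilde u(\tilde X_r))\,\mathrm{d}r$ using the admissible feedback $\tilde u$ of Assumption \ref{assump:lipschitz-control}. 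This bound is $O(T-s)$ locally uniformly by growth estimates, which handles $x\in C^\circ$. For $x\in\partial C$, we additionally use the $R(C^\circ)$ trajectory of Proposition \ref{prop:VcircR(C°)} (the zero-volatility inward flow with control $u_x$).

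The core step is a comparison principle. We aim to show that if $u$ is an upper semicontinuous subsolution on $[0,T)\times C^\circ$ of class $R(C^\circ)$, $w$ is a lower semicontinuous supersolution on $[0,T)\times C$, both have polynomial growth, and $u(T,\cdot)\le w(T,\cdot)$ on $C$, then $u\le w$ on $[0,T]\times C$. We first reduce to strict sub/supersolutions with decay at infinity by the standard transformation $e^{\kappa t}$ together with a perturbation $\delta e^{-\kappa t}(1+|x|^{2p})$. This uses linear growth of $b,\sigma$ and compactness of $U$. Suppose the maximum $M$ of $u-w$ is positive and attained at some $(\hat t,\hat x)$. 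If $\hat x\in C^\circ$, the usual doubling of variables and Ishii's lemma give a contradiction. If $\hat x\in\partial C$, we use the Soner-type test function from the $R(C^\circ)$ property:
\[
\Phi_\varepsilon(t,s,x,y)=u(t,x)-w(s,y)-\Big|\tfrac{x-y-l(\varepsilon)}{\varepsilon}\Big|^2-\Big|\tfrac{t-s-\lambda(\varepsilon)}{\varepsilon}\Big|^{2}-|t-\hat t|^2-|y-\hat x|^4 .
\]
Here the shifted point $x=y+l(\varepsilon)+o(\varepsilon)$ is forced into $C^\circ$ by \eqref{in:defR(C)-2}. Therefore the subsolution inequality for $u$ may be used at the maximizer $x_\varepsilon$, while the supersolution property of $w$ holds on all of $C$. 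Condition \eqref{eq:RO-continuous} guarantees that $u(\hat t+\lambda(\varepsilon),\hat x+l(\varepsilon))\to u(\hat t,\hat x)$. This ensures that $\max\Phi_\varepsilon\to M$ and that the penalty terms vanish. The bound \eqref{ineq:defR(C)-1} keeps $|l(\varepsilon)|/\varepsilon$ bounded, so the gradients remain controlled. Ishii's lemma combined with Lipschitz continuity of $b,\sigma,f$ then yields the contradiction.

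With comparison in hand, we conclude as follows:
\[
(V^\circ)^\star\le V_\star\le V\le V^\circ\le (V^\circ)^\star \quad\text{on } [0,T]\times C^\circ.
\]
This gives $V=V^\circ$ and continuity there. On $\partial C$, we extend by the $R(C^\circ)$ continuity along the inward trajectory, together with the supersolution inequality, so that $V$ is continuous on $[0,T]\times C$. $V$ is then a constrained solution in the stated class. Uniqueness follows by applying comparison twice, to any two solutions $\varphi_1,\varphi_2$ in the class; one of them is used through its upper envelope of class $R(C^\circ)$.

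The main obstacle is the boundary case of the comparison proof. There we must check that the maximizers $x_\varepsilon$ indeed lie in $C^\circ$ near $\hat x$ (within the neighborhood $B$ and for $\varepsilon<r$). We must also check that the extra terms coming from $l(\varepsilon)$ and from the polynomial-growth localization stay controlled in the Hamiltonian difference without boundedness of $C$ or of the coefficients.
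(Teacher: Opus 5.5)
Your overall architecture matches the paper's. $V_\star$ is a supersolution on $[0,T)\times C$, and $(V^\circ)^\star$ is a subsolution on $[0,T)\times C^\circ$ and of class $R(C^\circ)$ via the inward-pointing flow. Both have polynomial growth, and a comparison principle then gives $(V^\circ)^\star\le V_\star$, which closes the sandwich. The paper simply cites its appendix result, Theorem \ref{Teo:Comparison}, for that last step.

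The step that does not go through as you wrote it is the boundary case of your own comparison argument. Let $(t_\varepsilon,x_\varepsilon,s_\varepsilon,y_\varepsilon)$ maximize your $\Phi_\varepsilon$, with jets $(p_\varepsilon,X_\varepsilon)$ for the subsolution at $x_\varepsilon$ and $(q_\varepsilon,Y_\varepsilon)$ for the supersolution at $y_\varepsilon$. The Hessian of $\big|\frac{x-y-l(\varepsilon)}{\varepsilon}\big|^2$ is $\frac{2}{\varepsilon^2}\begin{pmatrix} I_d & -I_d\\ -I_d & I_d\end{pmatrix}$, so Ishii's lemma only yields
\[
H(y_\varepsilon,q_\varepsilon,Y_\varepsilon)-H(x_\varepsilon,p_\varepsilon,X_\varepsilon)\le \frac{1}{\varepsilon^{2}}\sup_{a\in U}\|\sigma(x_\varepsilon,a)-\sigma(y_\varepsilon,a)\|^2+\dots\le L^2\,\frac{|x_\varepsilon-y_\varepsilon|^2}{\varepsilon^2}+\dots,
\]
with $L$ the Lipschitz constant of $\sigma$. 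Your penalty forces $x_\varepsilon-y_\varepsilon=l(\varepsilon)+o(\varepsilon)$, so this diffusion residual equals $L^2|l(\varepsilon)|^2/\varepsilon^2+o(1)$. Condition \eqref{ineq:defR(C)-1} makes it bounded along a subsequence, but not small. For the $l$ built from Assumption \ref{assump:InwardPointing}, one has $|l(\varepsilon)|/\varepsilon\to|b(\hat x,u_{\hat x})|$, and this limit is strictly positive: if $b(\hat x,u_{\hat x})=0$, the assumption would give $B(\hat x,rt)\subseteq C^\circ$, contradicting $\hat x\in\partial C$. You therefore only reach an inequality of the form $\rho M\le(\text{nonpositive terms})+c$ with $c>0$ independent of $\varepsilon$, which contradicts nothing. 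The first-order terms are harmless, since $|x_\varepsilon-y_\varepsilon|\,|q_\varepsilon|\lesssim\varepsilon\cdot\varepsilon^{-1}o(1)$; it is only the second-order cross term that survives.

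The missing idea is a second, independent small weight on the spatial penalty. This is exactly the role of the factor $\varepsilon$ in the paper's $\Theta^n$, following \cite{bouchard2012weak}. Replace your spatial penalty by $\beta\big|\frac{x-y-l(\varepsilon)}{\varepsilon}\big|^2$ with $\beta>0$ fixed. For fixed $\beta$ the penalty still vanishes at the maximizers, so $x_\varepsilon-y_\varepsilon-l(\varepsilon)=o(\varepsilon)$, and \eqref{in:defR(C)-2} still places the subsolution point in $C^\circ$. Along a subsequence realizing the $\liminf$ in \eqref{ineq:defR(C)-1}, the diffusion residual is now bounded by $\beta L^2c_0^2+o(1)$, where $c_0$ is that $\liminf$. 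Sending $\varepsilon\to0$ first and then $\beta\to0$ yields the contradiction, because the strict margin produced by $\rho$ and the polynomial perturbation does not depend on $\beta$. With this change, or simply by citing Theorem \ref{Teo:Comparison}, your proof becomes the paper's.

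A minor point: your check of the terminal ordering $(V^\circ)^\star(T,\cdot)\le 0\le V_\star(T,\cdot)$ on $C$ is worthwhile, since Theorem \ref{Teo:Comparison} requires it and the paper leaves it implicit. However, the bound through the feedback $\tilde u$ of Assumption \ref{assump:lipschitz-control} is already locally uniform in $y\in C^\circ$ up to $\partial C$. So no inward trajectory is needed there, and Definition \ref{def:RO} only concerns $t<T$ anyway.
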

\begin{proof}
    Recalling that $V \leq V^\circ$ on $[0,T] \times C^\circ$, one has that $V^\star \leq (V^\circ)^\star$ on $[0,T] \times \overline{(C^\circ)} = [0,T] \times C$. Now by Propositions \ref{prop:SupersolutionProperty}, \ref{prop:SubsolutionProperty}, \ref{prop:PolynomialGrowth} we have that $V$ and $V^\circ$ are supersolution and subsolution of \eqref{eq:ConstrainedHJB} respectively. Applying Theorem \ref{Teo:Comparison}, we get that $(V^\circ)^\star \leq V_\star$ on $[0,T] \times C$. Therefore, we have
    \[
    V^\circ \leq (V^\circ)^\star \leq V_\star \leq V,
    \]
    hence obtaining the equality $V=V^\circ$ and the continuity of the functions.
\end{proof}

\section{Solving the planning problem in the case of Itô diffusions}\label{sec:ito-process}


In this section we apply the results of Section \ref{sec:State-Constr} to solve Problem \eqref{prob:equivalent-problem}. The electric market has several sources of uncertainty that include, among others, the randomness that comes from the volatility in the production and the different amount of production capacity depending on the time and season in which one is producing. The model is able to tackle in some way both types of uncertainties, but we will focus on the first one.
    
From now on, let us separate the control into two, the investment plan which is restricted to a maximum budget $\bar{\mu}>0$ and a stabilization control which is an indicator at each node of what proportion of the total volatility is being stabilized. We consider thus
\begin{align*}
 U = U_{\bar \mu} \times [0,1]^N, \quad \text{ with } \quad   U_{\bar \mu} := \bigg\{ \mu \in \R^N_+ :  \sum_{i=1}^N h^i_\mu\mu^i \leq \bar{\mu} \bigg\}, 
\end{align*}
    where $h^i_\mu \in (0,\infty)$ represents the unitary cost of the technology used to increase the capacity at node $i \in \{1,\dots,N\}$. Additionally, we consider $h^i_\alpha \in (0,\infty)$ the stabilization cost at the node $i\in \{1,\dots,N \}$. We do not limit the total spend in stabilization, given that it is an emergency purposed control. Nonetheless, we limit the investment by a fixed budget $\bar{\mu}$ at each instance $t \in [0,T]$, and since we consider a continuation of investment, taking a year as the unit of time, we can interpret $\bar{\mu}$ as the annual budget for investment in renewable energies.

For $\nu = (\mu,\alpha) \in \Uc$, let $(Q^{\nu,t,x}, D^{t,x})$ be the unique strong solution of the following SDE
\begin{align*}
    Q^i_s = x_Q^i + \int_t^s \mu_\tau^i \text{d}\tau + \int_t^s \sigma^i (Q^i_\tau - \hat{Q}^i) (1-\alpha^i_\tau)\text{d}W_\tau, \quad
    D^i_s = x_D^i + \int_t^s p^i_D(\tau)\text{d}\tau,\quad i=1,\dots,N,
\end{align*}
With \green{$x= (x_Q,x_D) \in \R^N\times \R^N$ the initial condition,} $\sigma^i\geq0$ for $i=1,\dots,N$, the volatility rate at each node,  $\hat{Q}\in \R^N_+$ a fixed amount of nonstochastic capacity,
and $p_D : \R_+ \to \R^N$ a bounded Lipschitz continuous function. We then consider the (demand-independent) cost of investment functions $h:U \times \R^N \to \R^N$ as
\begin{align*}
    h^i((\mu,\alpha),Q) = h^i_\mu \mu^i + h^i_\alpha \sigma^i(Q^i-\hat{Q}^i) \alpha^i, \quad i=1,\dots,N,
\end{align*}
which gives us all the elements for \eqref{prob:equivalent-problem} to be well defined. The presence of $\hat{Q}$ represents a previously available non-renewable source, whereas all the additional capacity is renewable energy, therefore the volatility depends only on the extra amount $(Q-\hat{Q})$.
    
Let us take a look at the dynamics and discuss how to use the tools presented in Section \ref{sec:State-Constr}. The drift term in the dynamics of $Q$ represents the acquisition of technology, which is linked to the investment budget, and we assume that it is normalized. The volatility term represents a fixed unitary volatility, considering only the renewable part of the capacity, and it is stabilized through the control $\alpha$. While $Q$ is a controlled capacity, $D$ is a deterministic prediction
of the demand for a period of time. We need a model such that in critical scenarios it is possible to return to the interior of the constrained set. 
This is a condition that links the planification $p_D$ and the overall investment budget $\bar{\mu}$. 

\begin{remark}\label{remark:InwardPointing}
    In the electricity market context, Assumption \ref{assump:InwardPointing} has an economic interpretation. What does it mean to be in the boundary of the set $K$? At these points the capacity is in a critical situation in which one has to produce at maximum capacity just to fulfill the demand. Moving just a little in the wrong direction puts us outside of the set $K$, which would mean not fulfilling the demand, which is catastrophic for the market (and potentially illegal). Therefore, whatever the cost, the network needs to go back to a comfortable state. For this reason, payments are made to turn off the volatility, for instance by buying hydraulic energy, and exit the critical state. Mathematically, in our model, this translates into turning off the volatility by taking $\alpha\equiv1$, and investing 
    in an emergency plan. For instance, forgetting about the transmission lines and producing the extra energy locally at each node.
\end{remark}

\begin{assumption}\label{assump:IP-constant-A}
     The budget $\overline{\mu}$ satisfies
     \begin{align*}
         \overline{\mu} > \sup_{s \in [0,T]} \left( \sum_{i=1}^N h^i_\mu (p_D^{i} (s))^+ \right).
     \end{align*}
\end{assumption}


\begin{proposition}\label{prop:5 implies 3,4}
    Let Assumption \ref{assump:IP-constant-A} hold true. Then Assumptions \ref{assump:lipschitz-control} and \ref{assump:InwardPointing} hold true.
\end{proposition}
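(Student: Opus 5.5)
We work with the state $x=(x_Q,x_D)\in\R^{2N}$ and the constraint set $C=K=G^{-1}(\R_+)$. The key observation is monotonicity of $G$: by its definition \eqref{func:G}, $G$ is nondecreasing in $Q$ and nonincreasing in $D$. Moreover, $G(Q+a\mathbf 1,D+b\mathbf 1)\ge G(Q,D)+a-b$, because raising every $q^i$ by $a$ is admissible inside $[0,Q+a\mathbf 1]$ when $q\le Q$. The plan is to choose, by Assumption \ref{assump:IP-constant-A}, a constant investment
$$\mu^{\star,i}:=(p_D^i)^+_{\max}+\delta, \qquad (p_D^i)^+_{\max}:=\sup_s (p_D^i(s))^+,$$
with $\delta>0$ small enough that $\sum_i h^i_\mu\mu^{\star,i}\le\bar\mu$. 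We pair it with $\alpha^\star=\mathbf 1$, which kills the volatility. Under the control $u^\star=(\mu^\star,\mathbf 1)$ the dynamics become deterministic: $\dot Q^i=\mu^{\star,i}$ and $\dot D^i=p_D^i(s)$. Hence the capacity margin $Q^i-D^i$ increases at rate at least $\delta$, and so
$$G(Q_s,D_s)\ge G(x)+\delta(s-t).$$

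\textbf{Assumption \ref{assump:lipschitz-control}.} Take $\tilde u\equiv\hat u\equiv u^\star$. Constant maps are trivially Lipschitz, and the resulting SDE is the ODE above. Starting in $C$ (respectively $C^\circ$), $G$ stays $\ge0$ along the trajectory. It remains to see that it also stays in $C^\circ$: I will argue $\{G>0\}\subseteq C^\circ$ by continuity of $G$ (Proposition \ref{prop:Kcloseandconvex}). Conversely, a point with $G=0$ lies in $C^\circ$ only if nearby points have $G\ge0$. This would contradict strict decrease of $G$ in the $D$-direction, since decreasing all $D$ increases the margin strictly. Hence $C^\circ=\{G>0\}$, and starting in $C^\circ$ the trajectory has $G>0$ for all $s$. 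One subtlety: the drift of $D$ is time-dependent, whereas Section \ref{sec:State-Constr} uses autonomous coefficients. I would add time as a state coordinate, or simply note that all arguments go through with $t$-dependent Lipschitz coefficients. The same remark covers the Lipschitz requirement on $b,\sigma$, which holds since $\sigma^i(Q^i-\hat Q^i)(1-\alpha^i)$ is Lipschitz on bounded control sets.

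\textbf{Assumption \ref{assump:InwardPointing}.} Fix $x\in\partial C$ and take $u_x=u^\star$; then $\sigma(z,u_x)=0$ for all $z$. For the ball condition, set $z\in C\cap B(x,\eta)$ and $b=b(z,u^\star)=(\mu^\star,p_D(s))$. For $w$ with $|w|\le rt$, I estimate $G(z+tb+w)$ using the lower bound on the $Q$-coordinates and the upper bound on the $D$-coordinates:
$$G(z+tb+w)\ge G(z)+t\min_i(\mu^{\star,i}-p_D^i)-2|w|_\infty\ge t\delta-2rt>0$$
if $r<\delta/2$. Here I use the shift inequality above and the fact that $G$ is $1$-Lipschitz in sup norm in $Q$ and $D$ separately, which follows from the min-max structure with truncation of $q$. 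Hence $B(z+tb,rt)\subseteq\{G>0\}=C^\circ$.

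\textbf{Main obstacle.} The step I expect to be hardest is the Lipschitz and monotonicity estimate for $G$ in $Q$ when some $Q^i$ decreases. Shrinking $Q^i$ by $\epsilon$ forces $q^i$ down by at most $\epsilon$, which is what gives the $|w|$ term. Positivity constraints $Q\in\R^N_+$ near $Q^i=0$ need a check. A related point is making sure the identity $C^\circ=\{G>0\}$ holds, which is needed to land in the interior.
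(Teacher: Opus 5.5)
\textbf{Gap: the constant investment is not admissible under Assumption~\ref{assump:IP-constant-A}.} The constant control $\mu^{\star,i}=\sup_s (p^i_D(s))^+ + \delta$ need not lie in $U_{\bar\mu}$. Assumption~\ref{assump:IP-constant-A} only bounds $\sup_{s}\sum_i h^i_\mu (p_D^i(s))^+$. Your budget check instead needs $\sum_i h^i_\mu \sup_s (p_D^i(s))^+<\bar\mu$, and this quantity can be strictly larger when demand growth peaks at different nodes at different times.

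A counterexample: take $N=2$, $h^1_\mu=h^2_\mu=1$, $p_D^1(s)=(1-s)^+$ and $p^2_D(s)=\min(s,1)$. These are bounded and Lipschitz, and $p^1_D+p^2_D\equiv 1$. Then $\bar\mu=3/2$ satisfies Assumption~\ref{assump:IP-constant-A}. However, every constant $\mu$ with $\mu^i> p^i_D(s)$ for all $s$ costs more than $2$. So no constant admissible control dominates $p_D$ uniformly in time. Both of your verifications therefore fail: Assumption~\ref{assump:lipschitz-control} with $\tilde u\equiv\hat u\equiv u^\star$, and Assumption~\ref{assump:InwardPointing} with $u_x=u^\star$. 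Your argument is valid only under the stronger budget $\bar\mu>\sum_i h^i_\mu\sup_s(p^i_D(s))^+$, for instance when $p_D$ is constant as in the numerical section.

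\textbf{The repair, which is the paper's route.} Let the investment track the current demand growth, with time as an extra state coordinate. You already suggest augmenting time, and the HJB in Theorem~\ref{Teo:HJB_CE} is written in the variables $(s,x,y)$.

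\begin{itemize}
\item Choose $\delta>0$ with $\sup_s\sum_i h^i_\mu\big((p^i_D(s))^+ + 2\delta\big)<\bar\mu$.
\item For Assumption~\ref{assump:lipschitz-control}, take the feedback $\tilde u(s,Q,D)=\hat u(s,Q,D)=\big((p_D(s))^++\delta,\mathbf 1\big)$. It is Lipschitz in $(s,Q,D)$ because $p_D$ is Lipschitz and $(\cdot)^+$ is $1$-Lipschitz. Your shift inequality then still gives $G(Q_s,D_s)\ge G(Q_t,D_t)+\delta(s-t)$.
\item For Assumption~\ref{assump:InwardPointing}, $u_x$ must be a fixed element of $U$. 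At a boundary point with time coordinate $s_0$, freeze $u_x=\big((p_D(s_0))^+ + 2\delta,\mathbf 1\big)$. The paper makes the same time-dependent choice, $\mu^i=(p^i_D(s))^++2\varepsilon$.
\item By Lipschitz continuity of $p_D$, $\mu^i-p^i_D(s')\ge\delta$ for all $s'$ near $s_0$. This margin is what makes the drift $(1,\mu,p_D(s'))$ at nearby points $z=(s',Q',D')$ point inward.
\item Your estimate $G(z+tb+w)\ge t\delta-2|w|_\infty$ with $r<\delta/2$ then goes through on a small ball around the boundary point.
\end{itemize}

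The rest of your proposal is sound: switching off the volatility with $\alpha=\mathbf 1$, the shift and sup-norm Lipschitz bounds for $G$, and the identification $K^\circ=\{G>0\}$ via the exact identity $G(Q,D+a\mathbf 1)=G(Q,D)-a$. The last point is used implicitly in the paper's proof but never justified there.
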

\begin{proof}
    We start by considering Assumption \ref{assump:InwardPointing}. Let us consider $(s,Q,D) \in \R_+ \times  K$. Due to the strict inequality in Assumption \ref{assump:IP-constant-A}, there exist $\varepsilon>0$ such that
    \begin{align*}
        \overline{\mu} > \sup_{s \in [0,T]} \left( \sum_{i=1}^N h^i_\mu [(p_D^{i} (s))^+ + 2\varepsilon] \right).
    \end{align*}
    Then, by taking $\mu^i = (p^i_D(s))^+ + 2\varepsilon$ for each $i=1,\dots,N$, we have that $\mu \in U_{\mu}$ and $p_D^i(s) + \varepsilon < \mu^i,  \forall i = 1,\dots, N. $
    By taking $u = (\mu,\textbf{1})\in U$ the volatility turns off, which gives us the first part of Assumption \ref{assump:InwardPointing}. What is left to check is the interior pointing condition. For $t>0$, note that we have  
    \begin{align*}
        G(Q+t\mu^i, D+tp_D(s))  &= \max_{\phi \in [\underline{\phi},\overline{\phi}]} \min_i \left( (Q^i + t\mu^i) - (D^i + tp^i_D(s)) + \sum_{e \in K_i} \left( sgn(i,e)\phi^e - \frac{r^e}{2}(\phi^e)^2 \right) \right) \\
        & \geq  G(Q,D)+t\varepsilon
         \geq t\varepsilon 
    \end{align*}
    By taking $r=\varepsilon/4$, by an analogous calculation for the function $G$, we have that the second condition for Assumption \ref{assump:InwardPointing} holds true. 
    
    To see that Assumption \ref{assump:lipschitz-control} holds, we need to prove the existence of a predictable control $\nu$ such that the process $(Q^\nu,D)$ stays in  $K^\circ$. For $\eta>0$ small enough, consider the control $\nu(s,Q,D) := (p_D(s)+ \eta)$, and any $(t,Q_t,D_t) \in [0,T]\times K^\circ$. Then, by direct calculation, we get
$
        G(Q^\nu_s, D_s) \geq (s-t)\eta + G(Q_t,D_t)> (s-t)\eta \geq 0,
$
    obtaining that the trajectory stays the whole period in the interior of $K$. Since $\nu$ is a Lipschitz control with respect to the triplet $(s,Q,D)$, Assumption \ref{assump:lipschitz-control} holds true. 
\end{proof}

Now that the chosen dynamics fits the framework of Section \ref{sec:State-Constr}, what is left to check is that the objective function is also well suited. With this in mind, we state the following assumption.

\begin{assumption}\label{assump:C(Q,D) Lipschitz}
    (i) Assumption \ref{assump:1} holds true and the function
        $C(Q,D) = \sum_{i=1}^N c^i(q^{\star,i}(Q,D),Q^i)$,
    is  a globally Lipschitz continuous function on $K$. 
    
    (ii) $h$ is globally Lipschitz continuous with respect to $(Q,D)$ in $K$, uniformly in $(\mu,\alpha)$.
\end{assumption}

\begin{remark}
    The Lipschitz continuity assumption for the problem \eqref{prob:DAPQD} is intuitive, as a change in capacity or demand only requires a relocation of the energy plus a little change in the amounts of production. The graph structure of the problem makes it complicated to prove in the general case, but in specific cases it is not hard to check. For example, in Section \ref{sec:Num-Results} the result comes directly from the Lipschitz continuity of \eqref{func:PWlinear} and the smoothness of the flow constraint.
\end{remark}


Let $H_{CE}  : \R_+ \times  \R^N\times  \R^N \times  \R\times \R^N\times  \R^N\times S^{N} \to\R$ be the Hamiltonian for the problem of CE
\begin{align*}
    H_{CE}(t,x,y,r,w,v,X)  := \sup_{u \in U} \left\{ - r - \sum_{i=1}^N \left [u_\mu^i w^i + p_D^i(s)v^i  + \frac{1}{2} (\sigma^i (1-u_{\alpha}^i)(x^i-\hat{Q}^i))^2 X^{i,i} + h^i(u,x,y) \right] \right\}.
\end{align*}
Following the results of the previous section, we have the following verification result.
\begin{theorem} \label{Teo:HJB_CE}
    Let Assumptions \ref{assump:IP-constant-A} and \ref{assump:C(Q,D) Lipschitz} hold true. One has that $V = V^\circ$ on $[0,T] \times K^\circ$, $V$ is continuous and is the unique constrained viscosity solution on $[0,T)\times K $ of \begin{align*}
        -\partial_t \varphi + H_{CE}(s,x,y,\partial_s\varphi, D_x \varphi, D_y \varphi, D^2_x\varphi) - \sum_{i=1}^N c^i(q^\star (x,y),x) = 0,& \quad (t,s,x,y) \in [0,T) \times \R \times K\\
        \varphi(T,s,x,y) = 0, & \quad (s,x,y)\in \R \times K
    \end{align*}
    in the class of functions with polynomial growth and having an upper semicontinuous envelope of class $R(K^\circ)$.
\end{theorem}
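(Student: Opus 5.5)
The plan is to reduce Theorem \ref{Teo:HJB_CE} to Theorem \ref{th:HJB-constrained}. The state is $X=(s,Q,D)$, where the time variable $s$ is appended so that the dynamics become autonomous. The constraint set is $C=\R\times K$, or $[0,T]\times K$ after restricting time. The drift is $b(X,u)=(1,\mu,p_D(s))$ and the volatility is $\sigma(X,u)=\mathrm{diag}\big(\sigma^i(Q^i-\hat Q^i)(1-\alpha^i)\big)$ acting on the $Q$-components. The running cost is
\[
f(X,u)=C(Q,D)+\sum_i h^i(u,Q),
\]
with $C$ as in Assumption \ref{assump:C(Q,D) Lipschitz}. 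By Proposition \ref{prop:LPuniqueness}, the value of \eqref{prob:equivalent-problem} coincides with the value $V$ of Section \ref{sec:State-Constr} for these data.

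\textbf{Verifying the structural hypotheses.} I would check these first.
\begin{itemize}
\item $U=U_{\bar\mu}\times[0,1]^N$ is compact.
\item $b$ and $\sigma$ are Lipschitz. For $b$ this uses that $p_D$ is bounded and Lipschitz. For $\sigma$ it uses that the coefficient $(x^i-\hat Q^i)(1-\alpha^i)$ is Lipschitz on $\R^N\times[0,1]$ in $x$ uniformly in $\alpha$. I would also check joint continuity in $u$, which holds locally.
\item $f$ is continuous, nonnegative, and Lipschitz in the state uniformly in $u$. On $K$ this is Assumption \ref{assump:C(Q,D) Lipschitz}(i)--(ii). Off $K$ I would extend $C$ by a Lipschitz extension (McShane), since the value only sees $f$ on $C$ for admissible controls.
\item $C$ has nonempty interior with $\overline{C^\circ}=C$. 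This follows from convexity and closedness of $K$ (Proposition \ref{prop:Kcloseandconvex}) together with nonempty interior. Nonempty interior should follow from Assumption \ref{assump:IP-constant-A}: increasing $Q$ strictly increases $G$.
\end{itemize}

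\textbf{Inward pointing and Lipschitz-control hypotheses.} Proposition \ref{prop:5 implies 3,4} gives both Assumption \ref{assump:lipschitz-control} and Assumption \ref{assump:InwardPointing}. One subtlety needs care: in the augmented state, the pointing condition must hold uniformly for $z$ near a boundary point, including the time coordinate. The proof of Proposition \ref{prop:5 implies 3,4} provides a uniform $\varepsilon$ over $s\in[0,T]$, and $G$ is $1$-Lipschitz in $Q$ and $D$ (it is a max--min of functions that are affine in $(q,D)$, with $q\in[0,Q]$). Together these yield a ball $B(z+tb,rt)\subseteq C^\circ$.

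\textbf{Applying the abstract theorem.} Theorem \ref{th:HJB-constrained} then gives $V=V^\circ$ on the interior, continuity, and uniqueness in the class of polynomial-growth functions whose upper envelope is in $R(K^\circ)$. What remains is to identify the abstract Hamiltonian $\sup_u\{-\Lc^u-f\}$ with $H_{CE}-C$, so that $f$ splits into the control-independent part $C$ and the part $h$. Writing $\Lc^u$ explicitly gives
\[
\Lc^u=\partial_s\varphi+\mu\cdot D_x\varphi+p_D\cdot D_y\varphi+\tfrac12\sum_i\big(\sigma^i(1-\alpha^i)(x^i-\hat Q^i)\big)^2\partial_{x^ix^i}\varphi.
\]
Here the diffusion matrix is diagonal because each node has its own noise term. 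This matches $H_{CE}$ with $r=\partial_s\varphi$.

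\textbf{Main obstacle.} I expect the hardest step to be the quadratic-growth estimate (Proposition \ref{prop:PolynomialGrowth}). It is needed to land in the uniqueness class, and it requires $V$ to be finite, which is supplied by the Lipschitz feedback controls. A secondary obstacle is that $h_\alpha$ contains $Q^i-\hat Q^i$, which may be negative. I would handle it with absolute values, or note that it stays Lipschitz with linear growth, which is sufficient.
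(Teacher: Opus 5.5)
Your proposal is correct and follows the paper's route: the paper's proof simply invokes Proposition \ref{prop:5 implies 3,4} to obtain Assumptions \ref{assump:lipschitz-control} and \ref{assump:InwardPointing} and then applies Theorem \ref{th:HJB-constrained}. It leaves implicit the checks you spell out, namely the time augmentation with $C=\R\times K$, a Lipschitz extension of $C(Q,D)$ off $K$, Lipschitz continuity of $\sigma$ in the state uniformly in $\alpha$, and the identification of $\sup_u\{-\Lc^u-f\}$ with $H_{CE}-C$. One small point: a single $u_x$ has to serve all $z$ near a boundary point, including nearby time coordinates, so the uniform $\varepsilon$ alone is not enough and you also need continuity of $p_D$ (take $\mu^i=(p_D^i(s_0))^{+}+2\varepsilon$ at the base point and shrink $\eta$).
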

\begin{proof}
    Due to Proposition \ref{prop:5 implies 3,4}, we can apply Theorem \ref{th:HJB-constrained} to obtain the desired result.
\end{proof}

\section{Numerical results}\label{sec:Num-Results}

In this section, we numerically solve the HJB equation that arises from Theorem \ref{Teo:HJB_CE} in a simplified market model. To this end, we consider \nico{three} settings using the three-node representation of the Chilean electric market shown in Figure \ref{graph:Chile}. The first setting assumes constant demand, that is, $p_D \equiv 0$. The second \nico{and third settings} assume a linear increase in demand, that is, $p_D \equiv \iota > 0$. The settings reflect short-term, mid-term and long-term planning, respectively. The first corresponds to a period of two years, where demand variations are negligible and a constant demand is appropriate. The second one covers a period of one decade \nico{and the third one of twenty years}, so the increase in demand must be considered. \nico{The main difference between the last two settings is that, given the initial capacities in the network, the state constraint activates only in the long-term.} We calibrate the simulations to mimic the Chilean data presented in \cite{hernandez2023pollution}. At each node, we consider two non-renewable technologies, coal and gas, and one renewable technology, so that each node has a piecewise linear cost function of the form
\begin{align}\label{func:PWlinear}
    c^i(q,Q) = \int_0^{q^i} \big( c^i_R \cdot 1_{[0,Q^i-\hat{Q}^i] } (s) +  c^i_{\text{coal}} \cdot 1_{(Q^i - \hat{Q}^i,Q^i - Q^i_{\text{gas}})}(s) + c^i_{\text{gas}} \cdot 1_{[Q^i_{\text{gas}}, \infty)}(s) \big)  \mathrm{d}s,
\end{align}
for each $i =1,\dots, N$. The marginal costs satisfy $0<c_R^i< c_{\text{coal}}^i<c^i_{\text{gas}}$, the existing capacities $Q^i_\text{coal},Q^i_\text{gas}  \geq 0$, and $  \hat{Q}^i = Q^i_\text{coal}+ Q^i_\text{gas} $. This cost function represents three distinct unit costs corresponding to renewable energy, coal, and gas. It is important to note that $Q_0 \geq \hat{Q}$, which means that the initial capacity includes an existing renewable capacity given by $Q_{\text{renewable}} := Q_0 - \hat{Q}$.

For easy reference, we present in Table \ref{table:units} the units used for the parameters and functions in the model. The time unit is one hour, which explains the units of the instantaneous quantities.

\begin{table}[h]
    \centering
    \small
    \begin{tabular}{|c|c||c|c|} \hline
            \bf Quantity  & \bf Units  & \bf Quantity & \bf Units      \\ \hline
            $Q^i_t$  &  MW     &  $\mu^i_t$     &  $\text{MW} \times  (\text{hours})^{-1}$  \\ \hline
            $D^i_t$  &  MW     &  $h^i_\mu$    &    dollars $\times$ $(\text{MW})^{-1}$   \\ \hline
            $q^i_t$  &  MW     &   $\bar{\mu}$   & dollars $\times $  $(\text{hours})^{-1}$    \\ \hline
            $\phi^e_t$  & MW     &   $\alpha^i_t$    &  scalar    \\ \hline
            $r^e$ & $(\text{MW})^{-1}$    &   $h^i_\alpha$   &   $\text{dollars} \times (\text{MW} \times (\text{hours})^{\frac{1}{2}})^{-1}$  \\ \hline
            $c^i_R, c^i_{\text{coal}},c^i_{\text{gas}} $ & $ \text{dollars} \times (\text{MW} \times 
 \text{hours})^{-1}$     &  $\sigma^i$   &  $(\text{hours})^{\frac{1}{2}}$     \\ \hline
    \end{tabular}
    \caption{Units in the model}
    \label{table:units}
\end{table}

In Subsections \ref{subsec:st-planning}, \ref{subsec:lt-planning} and \ref{subsec:slt-planning}, we plot 50 trajectories of the main processes. We prefer to display, instead of the behavior of individual trajectories, their distribution over the entire periods.


\subsection{Short-term planning} \label{subsec:st-planning}

In this subsection, we present simulations for the problem with a horizon of $T = 2$ years, assuming a constant demand process at each node, as the changes are negligible over this period. The initial conditions used for this simulation are summarized in Table \ref{table:constant-demand-initial}. We also set the flow limit at each edge to $\phi^e \in [-6 \text{ GW}, 6 \text{ GW}]$. Regarding the dynamics, we fix the total maximum budget to $\bar{\mu} T = 9 \cdot 10^6$ dollars for the two-year period, and set the volatility at each node to $\sigma^i = 5\%$. The cost parameters chosen for this scenario are given in Table \ref{table:costs-constant-demand}.\footnote{The investment costs are based on the following IRENA report: \url{https://www.irena.org/-/media/Files/IRENA/Agency/Publication/2024/Sep/IRENA_Renewable_power_generation_costs_in_2023_executive_summary.pdf}} 

\begin{table}[h]
    \small 
    \centering
    \begin{tabular}{|c||c||c||c||c||c|}
    \hline
     \bf Location & $Q_0$ & $Q_{\text{coal}}$ & $Q_{\text{gas}}$ & $Q_{\text{renewable}}$ & $D_0$  \\
     \hline
     \bf North & $6000$ & $1800$& $2400$ & $1800$ & $3000$ \\
     \hline
     \bf South & $2000$ & $800$ & $1000$ & $200$ & $1000$  \\
     \hline
     \bf Center & $12000$ & $1200$ & $8400$ & $2400$ & $6000$  \\
     \hline 
    \end{tabular}
    \caption{Initial conditions in MW }
    \label{table:constant-demand-initial}

\end{table}


\begin{table}[h]
    \centering
    \begin{tabular}{|c||c||c||c||c||c|}
    \hline
     \bf Location & $c_R$ & $c_{\text{coal}}$ & $c_{\text{gas}}$ & $h_\mu$ & $h_\alpha$  \\
     \hline
     \bf North & $ 1    $ & $40 $ &  $80 $  &  $758 $   & $100 $  \\
     \hline
     \bf South & $1 $ & $40 $  &  $80 $  &   $2806 $  & $100 $  \\
     \hline
     \bf Center & $1 $ & $40 $ &  $80 $  &  $758 $   & $100 $ \\
     \hline 
    \end{tabular}
    \caption{Investment and Operational costs
    }
    \label{table:costs-constant-demand}
\end{table}

As shown in Figure \ref{fig:st-capacities}, the capacities in both the North and South remain largely unchanged throughout the entire period, with only minor fluctuations due to the inherent volatility of their stochastic dynamics. In contrast, the capacity in the Center increases over time. The controls $\nu = (\mu, \alpha)$, displayed in Figure \ref{fig:st-combined-nu}, show that investment is concentrated in the first half of the period, while no stabilization effort is applied. This pattern in the investment control is expected since the necessary condition for minimizing the Hamiltonian requires that the installation cost of renewable capacity, $h^i_\mu$, be lower than the marginal value of additional energy. Once this condition no longer holds, further investment is no longer optimal.

\begin{figure}[h]
    \centering
    \includegraphics[width=0.4\linewidth]{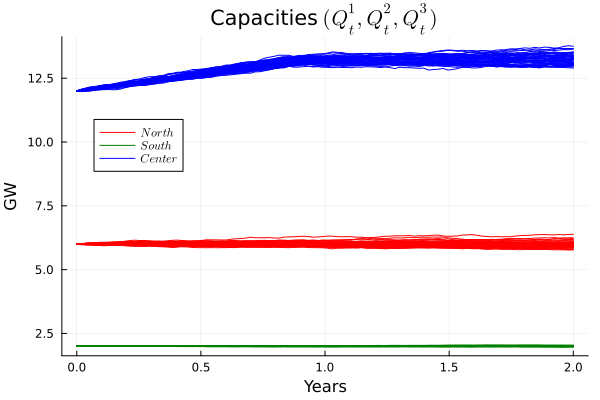}
    \caption{Short-term Capacities}
    \label{fig:st-capacities}
\end{figure}

\begin{figure}[h]
    \centering
    \begin{subfigure}[b]{0.33\linewidth}
        \centering
        \includegraphics[width=\linewidth]{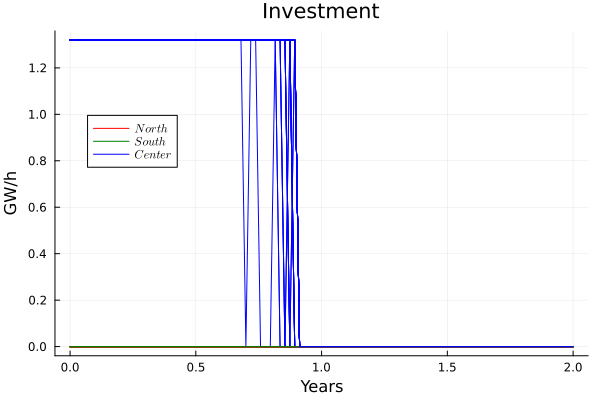}
        \caption{Short-term investment}
        \label{fig:st-investment}
    \end{subfigure}\quad 
    \begin{subfigure}[b]{0.33\linewidth}
        \centering
        \includegraphics[width=\linewidth]{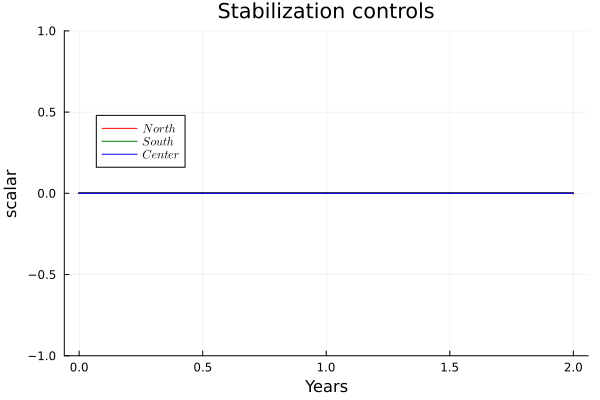}
        \caption{Short-term stabilization}
        \label{fig:st-stabilization}
    \end{subfigure}
    \caption{Short-term control $(\nu)$}
    \label{fig:st-combined-nu}
\end{figure}

The absence of stabilization spending, as shown in Figure \ref{fig:st-stabilization}, is unsurprising in this context. As discussed in Remark \ref{remark:InwardPointing}, stabilization becomes relevant only in critical scenarios. However, in this case, the capacity, especially the non-stochastic component, greatly exceeds the demand at each node and keeps the system far from any critical threshold. In fact, each node is initially equipped to cover twice its demand. Under such non-critical conditions, stabilization represents a significant cost without providing any tangible benefit because the value of $\alpha$ has no effect on the expected cumulative production cost.

\begin{figure}[h]
    \centering
    \begin{subfigure}[b]{0.33\linewidth}
        \centering
        \includegraphics[width=\linewidth]{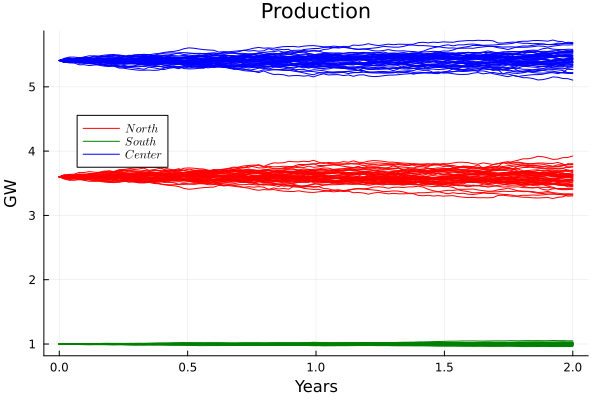}
        \caption{Short-term production}
        \label{fig:st-production}
    \end{subfigure}
    \quad
    \begin{subfigure}[b]{0.33\linewidth}
        \centering
        \includegraphics[width=\linewidth]{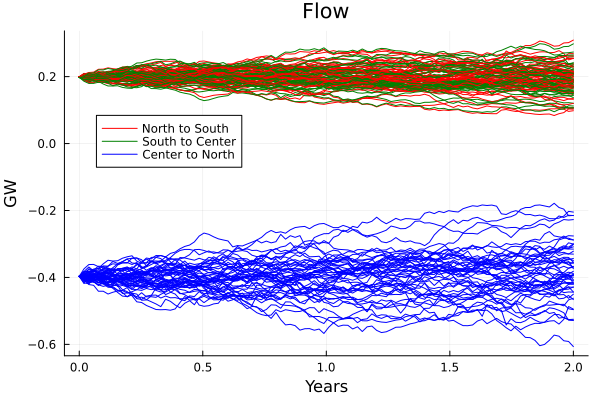}
        \caption{Short-term flow}
        \label{fig:st-flow}
    \end{subfigure}
    \caption{Short-term control $(q,\phi)$}
    \label{fig:st-combined-qphi}
\end{figure}

The solution of \eqref{prob:nu-subproblem} under the given control $\nu = (\mu, \alpha)$ is shown in Figure \ref{fig:st-combined-qphi}. Additionally, Figure \ref{fig:st-types-of-production} shows the level of production by energy source. The operational control $(q, \phi)$ appears steady, with fluctuations arising only from the volatility inherited from the stochastic dynamics of the capacity. This illustrates that the changes in the network are local, in the sense that there are no real changes in the flow, only in the type of production. This is even clearer in Figures \ref{fig:st-renewable} and \ref{fig:st-gas}, where we observe that the optimal strategy consists of a transition from gas to renewable energy as the source of production.

\begin{figure}[h]
    \centering
    \begin{subfigure}[b]{0.30\linewidth}
        \centering
        \includegraphics[width=\linewidth]{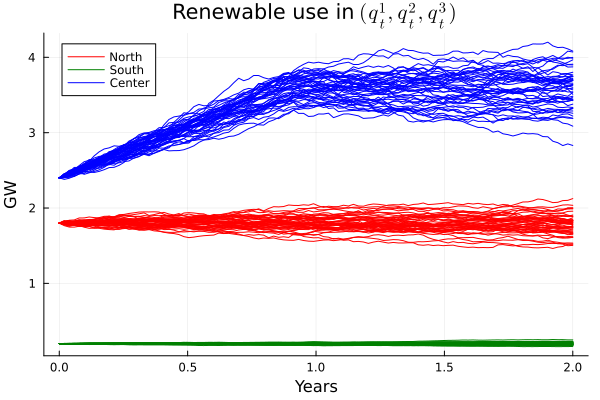}
        \caption{Renewable}
        \label{fig:st-renewable}
    \end{subfigure}
    \hfill 
    \begin{subfigure}[b]{0.30\linewidth}
        \centering
        \includegraphics[width=\linewidth]{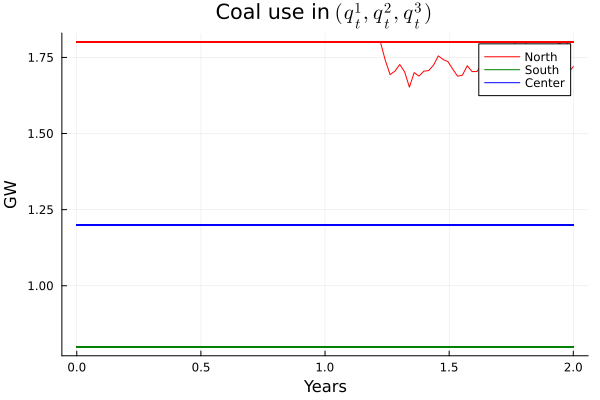}
        \caption{Coal}
        \label{fig:st-coal}
    \end{subfigure}
    \hfill 
    \begin{subfigure}[b]{0.30\linewidth}
        \centering
        \includegraphics[width=\linewidth]{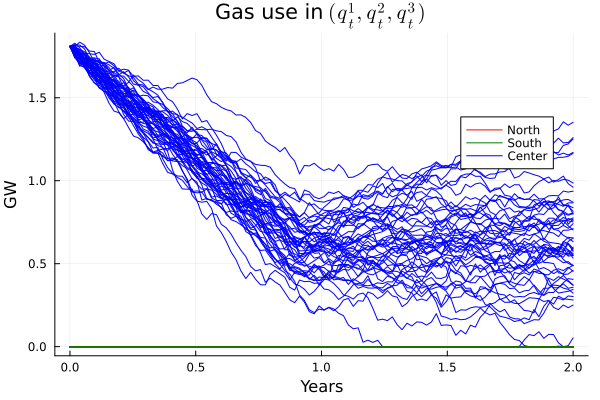}
        \caption{Gas}
        \label{fig:st-gas}
    \end{subfigure}
    \caption{Short-term types of production}
    \label{fig:st-types-of-production}
\end{figure}

The behavior of both investment and operational planning is a consequence of the short time period, as there is not enough time to achieve a structural change in the network’s overall energy transmission. 

\subsection{Mid-term planning} \label{subsec:lt-planning}

In this subsection we present simulations for the problem with $T = 10$ years and a linear increase in demand from $D_0$ to $D_{10} = \frac{53}{43} D_0$, which mimics an annual increase of $2\%$ per year. The initial conditions given in Table \ref{table:constant-demand-initial} are also used in this setting, along with the flow limit $\phi^e \in [-6 \text{ GW}, 6 \text{ GW}]$ at each edge. For the dynamics, we maintain the annual budget and adjust it to the 10-year period, resulting in a total budget of $\bar{\mu} T = 4.5 \cdot 10^7$ dollars. The volatility is kept at $\sigma^i = 5\%$ at each node. The cost parameters are the same as in the previous simulation, shown in Table \ref{table:costs-constant-demand}.

\begin{figure}[h]
    \centering
    \includegraphics[width=0.4\linewidth]{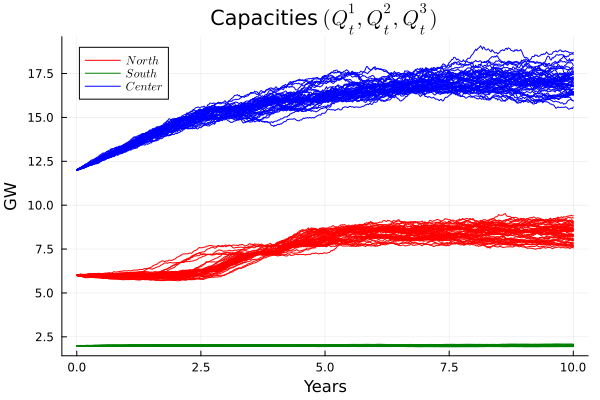}
    \caption{Mid-term Capacities}
    \label{fig:lt-capacities}
\end{figure}

Figure \ref{fig:lt-capacities} shows a sustained increase in capacity at both the North and Center nodes. In contrast, capacity in the South remains unchanged throughout the entire period. This is expected, as investing in new technology at that node is substantially more expensive. The controls $\nu = (\mu, \alpha)$ shown in Figure \ref{fig:lt-combined-nu} indicate sustained investment, shared between the North and Center until the eighth year, and \nico{as in the short-term case}, no stabilization effort is applied. Regarding the investment control, we observe an initial preference for investment in the Center. This is consistent with the previous case, where a local transition from gas to renewable sources was needed. Once this transition is complete, investment alternates between the North and Center. The absence of stabilization spending is expected for the same reasons as in the short-term simulation.

\begin{figure}[h]
    \centering
    \begin{subfigure}[b]{0.33\linewidth}
        \centering
        \includegraphics[width=\linewidth]{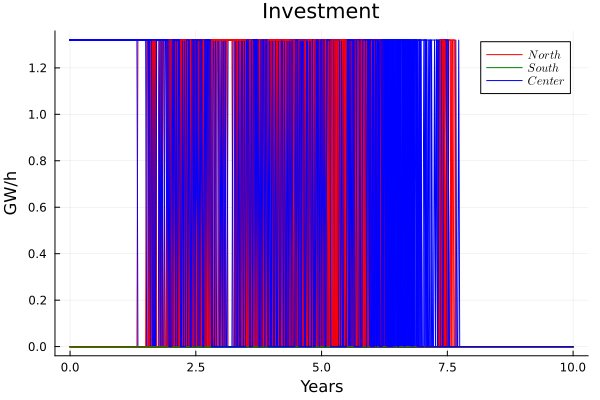}
        \caption{Mid-term investment}
        \label{fig:lt-investment}
    \end{subfigure}
    \quad
    \begin{subfigure}[b]{0.33\linewidth}
        \centering
        \includegraphics[width=\linewidth]{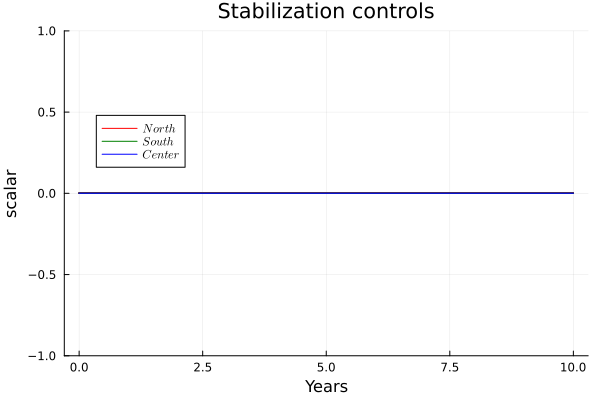}
        \caption{Mid-term stabilization}
        \label{fig:lt-stabilization}
    \end{subfigure}
    \caption{Mid-term control $(\mu,\alpha)$}
    \label{fig:lt-combined-nu}
\end{figure}

The continuous solution of \eqref{prob:nu-subproblem} under the given control $\nu = (\mu, \alpha)$ is shown in Figure \ref{fig:lt-combined-qphi}, and the breakdown of production by energy source is shown in Figure \ref{fig:lt-types-of-production}. We observe that the production control undergoes significant changes throughout the period, which can be divided into three main phases.

The first phase begins with an increase in production at the Center, accompanied by a brief decrease in production at the North. As shown in Figure \ref{fig:lt-flow}, this is due to a reduced need for flow in the Center. Once this transition is complete, the second phase starts, close to the beginning of the second year. During this phase, production at the North increases gradually, while internally there is a complete transition from coal to renewable sources. This can be seen in Figures \ref{fig:lt-renewable} and \ref{fig:lt-coal}. The transition occurs simultaneously at both the North and Center nodes, and the phase concludes with an almost complete shift to renewable energy at both locations.

The third phase begins around the fourth year and is characterized by an increase in flows toward the South. Midway through this phase, the investment control $\mu$ is turned off, so the capacity and production processes are influenced only by the volatility inherent in the dynamics.

In summary, the first phase corresponds to the transition from gas to renewable energy at the Center; the second phase to the transition from coal to renewable energy at both the North and Center; and the third phase to the transition from coal to external renewable sources at the South. As shown in Figure \ref{fig:lt-combined-qphi}, the uncertainty associated with renewable energy sources translates into uncertainty in the operational planning solution. This is expected since, in the hierarchy of controls, the operational component responds to the investment through the Day-ahead problem.

\begin{figure}[h]
    \centering
    \begin{subfigure}[b]{0.33\linewidth}
        \centering
        \includegraphics[width=\linewidth]{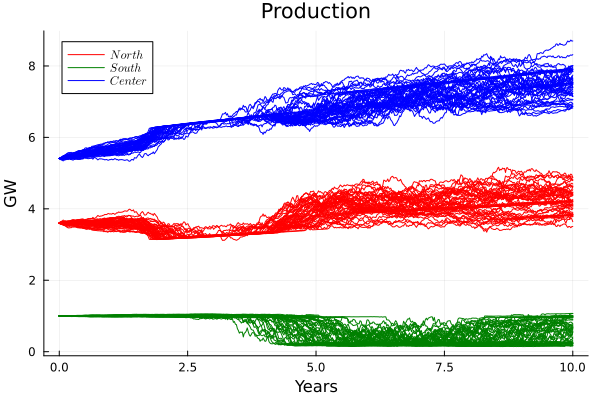}
        \caption{Mid-term production}
        \label{fig:lt-production}
    \end{subfigure}
    \quad
    \begin{subfigure}[b]{0.33\linewidth}
        \centering
        \includegraphics[width=\linewidth]{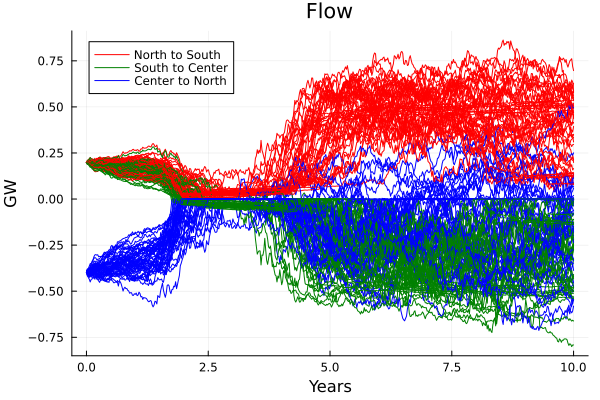}
        \caption{Mid-term flow}
        \label{fig:lt-flow}
    \end{subfigure}
    \caption{Mid-term control $(q,\phi)$}
    \label{fig:lt-combined-qphi}
\end{figure}
\begin{figure}[h!]
    \centering
    \begin{subfigure}[b]{0.30\linewidth}
        \centering
        \includegraphics[width=\linewidth]{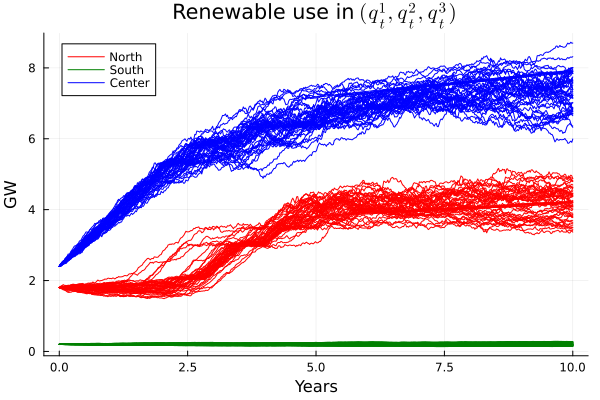}
        \caption{Renewable}
        \label{fig:lt-renewable}
    \end{subfigure}
    \hfill 
    \begin{subfigure}[b]{0.30\linewidth}
        \centering
        \includegraphics[width=\linewidth]{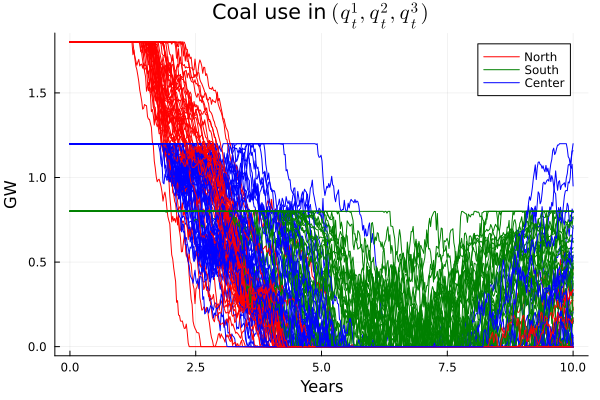}
        \caption{Coal}
        \label{fig:lt-coal}
    \end{subfigure}
    \hfill 
    \begin{subfigure}[b]{0.30\linewidth}
        \centering
        \includegraphics[width=\linewidth]{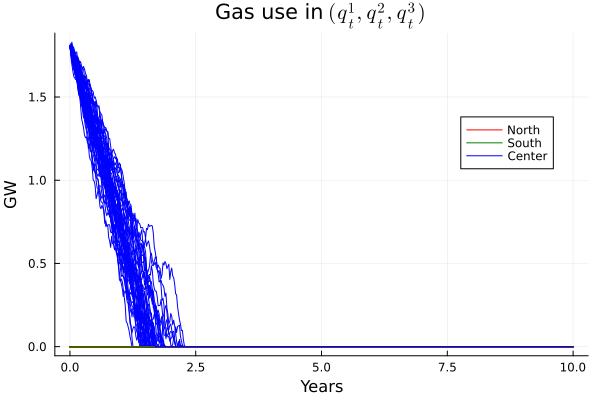}
        \caption{Gas}
        \label{fig:lt-gas}
    \end{subfigure}
    \caption{Mid-term types of production}
    \label{fig:lt-types-of-production}
\end{figure}

The overall behavior of the network in \nico{the mid-term} setting is consistent with what we expect from a massive entry of renewable energy. In this case, the network’s internal transmission changes, transforming the electric market into a system that depends primarily on renewable sources.

\subsection{Long-Term Planning} \label{subsec:slt-planning}

In this subsection, we present simulations for the problem with $T = 20$ and a linear increase in demand from $D_0$ to $D_{20} = \frac{37}{17} D_0$. This growth, higher than the annual increase considered in the mid-term planning, activates the state constraint of the problem, since the initial capacities reported in Table \ref{table:constant-demand-initial} are insufficient to meet the final demand. The remaining data are the same as in Subsection \ref{subsec:lt-planning}, with the cost parameters presented in Table \ref{table:costs-constant-demand}.
\begin{figure}[h]
    \centering
    \includegraphics[width=0.4\linewidth]{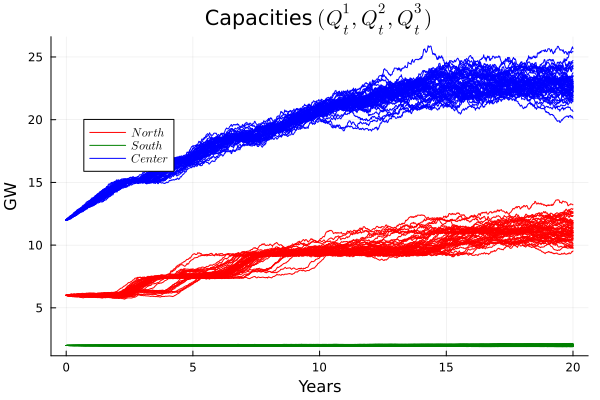}
    \caption{Long-term Capacities}
    \label{fig:slt-capacities}
\end{figure}

As shown in Figure \ref{fig:slt-capacities}, the evolution of capacity exhibits behavior similar to that in the previous setting, with a sustained increase at the North and Center nodes, while capacity in the South remains essentially constant, varying only due to stochastic fluctuations. This outcome is again expected, since investment costs in the North and Center are lower than in the South, see Table \ref{table:costs-constant-demand}.
\begin{figure}[h]
    \centering
    \begin{subfigure}[b]{0.33\linewidth}
        \centering
        \includegraphics[width=\linewidth]{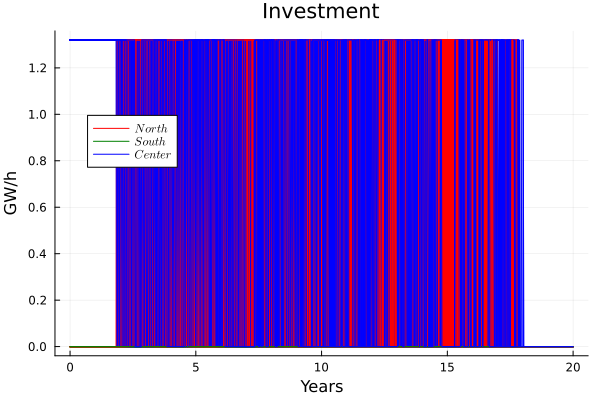}
        \caption{Long-term investment}
        \label{fig:slt-investment}
    \end{subfigure}
    \quad
    \begin{subfigure}[b]{0.33\linewidth}
        \centering
        \includegraphics[width=\linewidth]{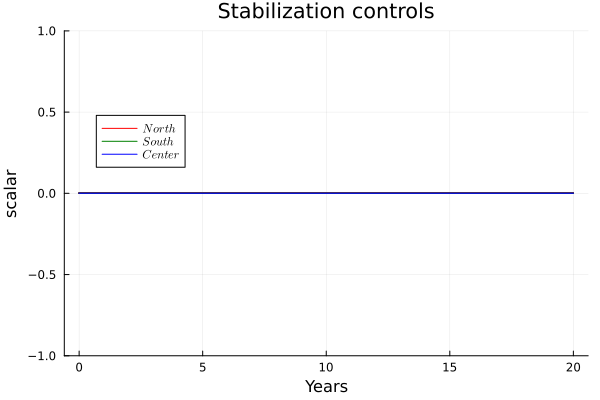}
        \caption{Long-term stabilization}
        \label{fig:slt-stabilization}
    \end{subfigure}
    \caption{Long-term control $(\mu,\alpha)$}
    \label{fig:slt-combined-nu}
\end{figure}

The controls $\nu = (\mu, \alpha)$ displayed in Figure \ref{fig:slt-combined-nu} appear similar to those in Figure \ref{fig:lt-combined-nu}, as no stabilization effort is applied in either case. However, there is a fundamental theoretical difference. In the mid-term setting, this is due to the large initial capacity. In the long-term setting, since $\hat Q^i < D_{20}^i$ at every node $i$, it follows that for any $U_{\bar\mu}$-valued control $\mu$, choosing $\nu \equiv (\mu,0) \in \Uc$\footnote{Here, $0$ denotes the random variable $\alpha : [0,T]\times \Omega \to \R^N$, defined by $(t,\omega) \mapsto (0,\cdots,0)$.} results in a positive probability of exiting the set $K$ at some time. Therefore, it is optimal for the ISO to invest significantly in new energy capacity in order to reduce the probability of incurring stabilization costs. Stabilization efforts are only used in critical scenarios, which occur with low probability and are not observed in our simulations.
Regarding the investment control, we observe an initial preference for the Center, associated with discontinuing the use of gas. Once this transition is complete, investment alternates between the North and Center. However, in contrast to the mid-term case, this investment is sustained over a larger portion of the time horizon. This behavior is explained by the faster growth in demand: higher demand increases the marginal value of energy, so the Hamiltonian is optimized by keeping the investment control active for a longer period.

The continuous solution of \eqref{prob:nu-subproblem} under the control $\nu = (\mu,\alpha)$ is shown in Figure \ref{fig:slt-combined-qphi}, and the breakdown of production by energy source is shown in Figure \ref{fig:slt-types-of-production}.
\begin{figure}[h]
    \centering
    \begin{subfigure}[b]{0.33\linewidth}
        \centering
        \includegraphics[width=\linewidth]{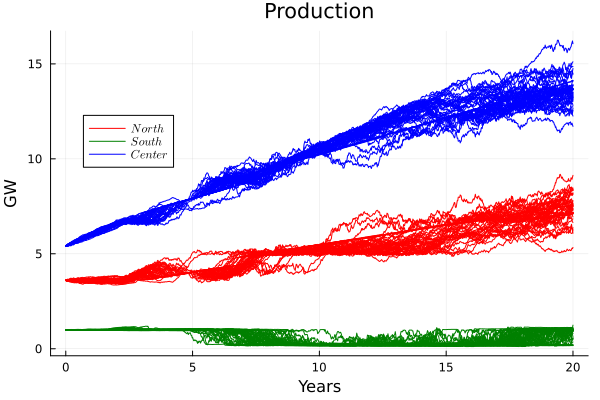}
        \caption{Long-term production}
        \label{fig:slt-production}
    \end{subfigure}
    \quad
    \begin{subfigure}[b]{0.33\linewidth}
        \centering
        \includegraphics[width=\linewidth]{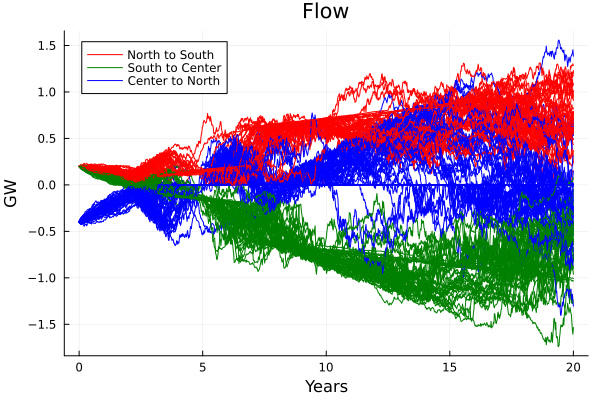}
        \caption{Long-term flow}
        \label{fig:slt-flow}
    \end{subfigure}
    \caption{Long-term control $(q,\phi)$}
    \label{fig:slt-combined-qphi}
\end{figure}
\begin{figure}[h!]
    \centering
    \begin{subfigure}[b]{0.30\linewidth}
        \centering
        \includegraphics[width=\linewidth]{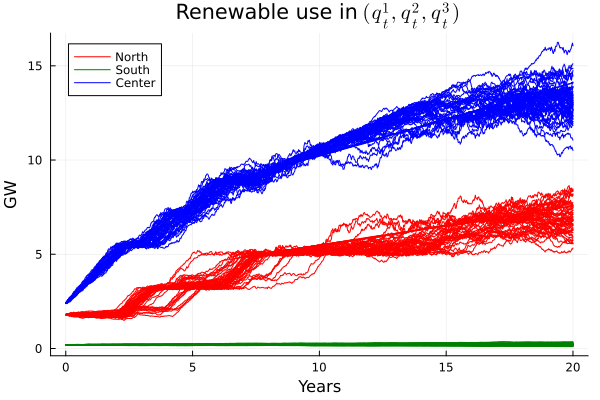}
        \caption{Renewable}
        \label{fig:slt-renewable}
    \end{subfigure}
    \hfill 
    \begin{subfigure}[b]{0.30\linewidth}
        \centering
        \includegraphics[width=\linewidth]{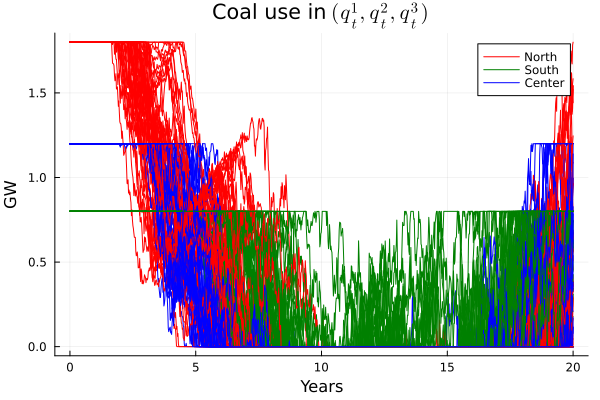}
        \caption{Coal}
        \label{fig:slt-coal}
    \end{subfigure}
    \hfill 
    \begin{subfigure}[b]{0.30\linewidth}
        \centering
        \includegraphics[width=\linewidth]{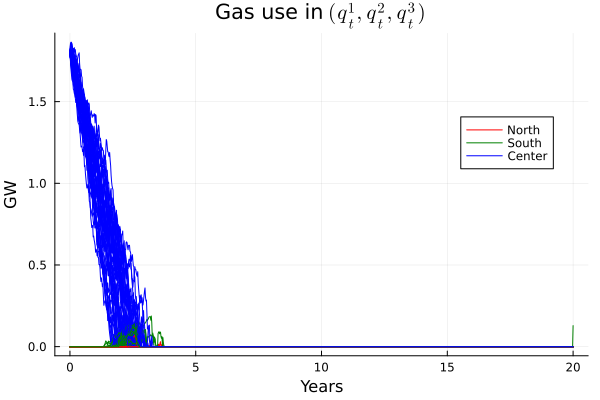}
        \caption{Gas}
        \label{fig:slt-gas}
    \end{subfigure}
    \caption{Long-term types of production}
    \label{fig:slt-types-of-production}
\end{figure}
Similarly to the previous setting, the production control evolves throughout the entire period. Its increase follows the same three main phases identified in the mid-term case.
The key difference between mid-term and long-term planning is that, in the latter, the increase in production must be sustained over a longer portion of the time horizon, as demand continues to grow and progressively approaches the initial available capacity.

\begin{remark}
    As the attentive reader may have already noticed, the finite difference scheme to solve the HJB equation that arises from Theorem \ref{Teo:HJB_CE} cannot handle the boundary conditions on $K$. Indeed, since the value function is only a supersolution on the boundary of the domain, its value on it is not determined. To the best of the authors’ knowledge, there is no standard framework to numerically solve state-constrained PDEs. We overcome this issue by using a penalization-type heuristic, in which we take a sufficiently large penalization parameter $\rho>0$, so that the HJB equation related to the unconstrained penalized problem will be given by
    \[ -\partial_t \varphi + H_{CE}(s,x,y,\partial_s\varphi, D_x \varphi, D_y \varphi, D^2_x\varphi) - \sum_{i=1}^N c^i(q^\star (x,y),x) = \rho d_K(x,y), \quad (t,s,x,y) \in [0,T) \times \R \times \R^N \times \R^N, \]
    where $d_K(x,y)$ denotes the distance between $(x,y)$ and the set $K$. We then expect the value functions of the penalized problems to converge monotonically to the constrained value function within the set $K$. The validity of this approach has been established in the deterministic setting by Bokanowski, Forcadel, and Zidani \cite{bokanowski2011deterministic}. Extending these results to the stochastic setting is beyond the scope of the present work.
\end{remark}

\section{Conclusions and future work} \label{sec:conclusion}

In this paper, we formulate and analyze the problem of an ISO seeking to minimize both operational and investment costs in an electricity network. Investment is directed toward renewable energy sources, whose intermittency introduces risk to the proper operation of the system. We provide a complete characterization of the problem within a stochastic control framework and establish well-posedness under realistic assumptions, namely Assumptions \ref{assump:IP-constant-A} and \ref{assump:C(Q,D) Lipschitz}. To the best of the authors’ knowledge, this is the first work to simultaneously account for both types of costs while incorporating a network structure that captures the geographical dimension of the electricity market. By first addressing the operational component, we are able to reformulate the investment problem as a stochastic control problem with state constraints.

\medskip
The numerical results in Section \ref{sec:Num-Results}, inspired in the Chilean network, highlight the relevance of several modeling features. In particular, the differentiation of energy sources already proves useful in the short-term planning setting, and remains important in the mid-term and long-term scenarios. However, the benefits of geographical differentiation and network structure become most apparent in the mid-term and long-term settings. The optimal investment strategy consistently avoids expanding capacity in the South, instead relying on transmission from the Center and North. This reflects the lower investment costs and greater potential for renewable development in those regions. These results suggest that incorporating a network structure is essential for accurately modeling large-scale, long-term investment in electricity systems.

\medskip
The state constraint introduces a significant technical challenge in the analysis. However, the numerical results, particularly the similarity between the solutions in Subsections \ref{subsec:lt-planning} and \ref{subsec:slt-planning}, indicate that optimal strategies are mostly unaffected by it. Differences arise only in rare events, or emergency situations, and in general it is optimal to avoid incurring stabilization costs. At the same time, Proposition \ref{prop:LPuniqueness} shows that the state constraint arises naturally from the operational component of the problem. This indicates that, even if it is not active most of the time, it remains a fundamental structural feature that should be preserved in more refined models of electricity markets. In this sense, the general framework developed in Section \ref{sec:State-Constr} is designed to accommodate future extensions. Overall, we believe the model represents a step forward in the continuous-time modeling of electricity markets with network structure.

\medskip
Regarding extensions of our work, a natural direction is to drop the assumption that the ISO’s interests are aligned with those of the producers. In that case, investment and operational decisions are no longer optimized jointly, but are instead determined through a sequential game in which producers choose investment first and anticipate the ISO’s operational response. Such formulation is closely related to the problem studied in this paper. Moreover, the arguments used in Proposition \ref{prop:LPuniqueness} suggest that it can be recast as a Nash-type differential game with a shared state constraint for the producers. This motivates the development of a theory of continuous-time stochastic games with state constraints on the controlled state process. To the best of the authors’ knowledge, a general treatment of this setting is still lacking, and it may have applications beyond electricity markets.

{\small
\bibliographystyle{plainurl} 
\bibliography{library}
}

\appendix

\section{Pseudo-Markov property} 

In this part, we use the setting presented in Section \ref{sec:State-Constr}. For $\omega \in \Omega$ and $ r \geq 0$, we denote $\omega^r_{\cdot} := \omega_{\cdot \land r}$ and $T_r(\omega)_{\cdot}:= \omega_{r \lor \cdot} - \omega_r$, from where $\omega = \omega^r + T_{r}(\omega)$. For $\nu \in \Uc(t,x)$, $\theta \in \mathcal{T}^{t}$, and $\tilde{\omega} \in \Omega$, we denote $\tilde{\nu}_\omega (\tilde{\omega}):= \nu(\omega^{\theta(\omega)} + T_{\theta(\omega)}(\tilde{\omega}))$. Clearly $\tilde{\nu}_\omega$ is an element of $\Uc(\theta(\omega), X^{\nu,t,x}_{\theta(\omega)})$ for each $\omega  \in \Omega$. 

\begin{lemma}\label{lemma:flow-property}
    Consider $f:\R^d \times U \to \R$ a measurable function, uniformly Lipschitz continuous in the first variable. For each $(t,x) \in [0,T) \times \R^d, \nu \in \Uc_t$ and $\theta \in \Tc^t$, we have that
    \begin{align*}
        \E \left[ \int_{t}^T f(X^{\nu,t,x}_s, \nu_s) \mathrm{d}s \bigg| \Fc_{\theta} \right] (\omega) = \int_t^{\theta(\omega)}f(X^{\nu,t,x}_s,\nu_s)(\omega)\mathrm{d}s +  J(\tilde{\nu}_\omega; \theta(\omega),X^{\nu,t,x}_{\theta(\omega)}(\omega)), \quad \P\text{-a.s.}
    \end{align*} 
\end{lemma}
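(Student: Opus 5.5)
We split the integral at $\theta$ and use the flow property of the SDE, following the pseudo-Markov property of Claisse, Talay and Tan \cite{claisse2016pseudo}. Since $\int_t^{\theta} f(X^{\nu,t,x}_s,\nu_s)\,\mathrm{d}s$ is $\Fc_\theta$-measurable (the integrand is progressively measurable and $\theta$ is a stopping time), it comes out of the conditional expectation. It is integrable because $f$ has linear growth (Lipschitz in $x$, continuous on the compact set $U$) and $X$ has finite second moments. This leaves the identity
\[
\E\left[\int_\theta^T f(X^{\nu,t,x}_s,\nu_s)\,\mathrm{d}s \,\Big|\, \Fc_\theta\right](\omega) = J\big(\tilde\nu_\omega;\theta(\omega),X^{\nu,t,x}_{\theta(\omega)}(\omega)\big), \quad \P\text{-a.s.},
\]
which we prove in three steps.

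\textbf{Step 1 (flow property).} For $\P$-a.e. $\omega$, the shifted process $\tilde\omega\mapsto X^{\nu,t,x}_s(\omega^{\theta(\omega)}+T_{\theta(\omega)}(\tilde\omega))$ solves \eqref{eq:SDE estandar} on $[\theta(\omega),T]$, driven by $\tilde\nu_\omega$, with initial datum $X^{\nu,t,x}_{\theta(\omega)}(\omega)$. Its initial datum is well defined because $X_{\theta}$ depends only on $\omega^{\theta}$. By pathwise uniqueness of strong solutions under the Lipschitz assumptions on $b$ and $\sigma$, this process coincides $\P(\mathrm{d}\tilde\omega)$-a.s. with $X^{\tilde\nu_\omega,\theta(\omega),X_{\theta(\omega)}(\omega)}$.

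\textbf{Step 2 (regular conditional probability).} On the canonical space, a regular conditional probability of $\P$ given $\Fc_\theta$ is the law of $\omega^{\theta(\omega)}+T_{\theta(\omega)}(\tilde\omega)$ under $\P(\mathrm{d}\tilde\omega)$. This follows from the strong Markov property of Brownian motion: $T_\theta(W)$ is again a Brownian motion independent of $\Fc_\theta$. Hence, for any nonnegative or integrable measurable functional $\Psi$ of the path,
\[
\E[\Psi\mid\Fc_\theta](\omega)=\int\Psi\big(\omega^{\theta(\omega)}+T_{\theta(\omega)}(\tilde\omega)\big)\,\P(\mathrm{d}\tilde\omega)\quad\text{a.s.}
\]
We apply this with $\Psi=\int_\theta^T f(X_s,\nu_s)\,\mathrm{d}s$ and substitute Step 1 inside the integral, which gives exactly $J(\tilde\nu_\omega;\theta(\omega),X_{\theta(\omega)}(\omega))$.

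\textbf{Step 3 (main obstacle).} The main difficulty is making Step 1 rigorous. The stochastic integral is defined only up to null sets, so it is not a priori clear that it commutes with the path-shift $\omega\mapsto\omega^{\theta}+T_\theta(\cdot)$ for $\P$-a.e. $\omega$ simultaneously. The approach is to first treat simple predictable controls and elementary stopping times taking finitely many values. In that case the solution is built by Picard iteration, and each iterate is a measurable functional of the path that commutes with concatenation. We then pass to the limit using the $L^2$ stability estimates for SDEs in $(\nu,x)$ together with Fubini's theorem, extracting an a.s.-convergent subsequence in $\tilde\omega$ for a.e. $\omega$. Finally, general $\theta$ is approximated from above by discrete stopping times, using the continuity of paths and the integrability of $f(X,\nu)$. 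This is precisely the argument of \cite{claisse2016pseudo}, and we rely on it rather than reproducing it in full.
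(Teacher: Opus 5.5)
Your proof is correct, but it is organized differently from the paper's. The paper uses \cite{claisse2016pseudo} only as a black box for bounded measurable \emph{terminal} payoffs, in four moves. It adjoins the running-cost state $Z_s=z+\int_t^s f(X_r,\nu_r)\,\mathrm{d}r$, so that the Lagrange cost becomes the Mayer payoff $\pi(X_T,Z_T)=Z_T$. It truncates this payoff to $\pi_n=\pi 1_{\{|\pi|\le n\}}$ and applies the pseudo-Markov property for each $n$. It then uses dominated convergence on both sides, checking the identity on sets $A\in\Fc_\theta$, to identify the limit as the conditional expectation. The term $\int_t^{\theta}f\,\mathrm{d}s$ reappears only at the end, as $Z_\theta$.

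You instead split off the $\Fc_\theta$-measurable part at the start. You then work with the two ingredients underlying that theorem: the regular conditional probability of Wiener measure given $\Fc_\theta$, realized by the concatenation $\omega^{\theta(\omega)}+T_{\theta(\omega)}(\tilde\omega)$, and the a.s. flow identity for the concatenated solution. The conditional-probability representation holds for any integrable measurable functional, and $\nu$ evaluated on the concatenated path is $\tilde\nu_\omega$ by definition. So you can handle $\int_\theta^T f(X_s,\nu_s)\,\mathrm{d}s$, which depends on the control as well as on $X$, directly, with no state augmentation and no truncation or limit step. You do use, implicitly, Galmarino's test $\theta(\omega^{\theta(\omega)}+T_{\theta(\omega)}(\tilde\omega))=\theta(\omega)$ to fix the lower limit of integration.

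Your route is shorter and gives the identity for any integrable functional of $(X,\nu)$. However, it relies on the flow identity inside \cite{claisse2016pseudo} rather than on its stated result. The paper needs only the published statement for bounded terminal payoffs, at the cost of the augmentation and approximation arguments.

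One minor point: your integrability argument uses continuity of $f$ in $u$, which the lemma does not assume. What is actually needed is $\sup_{u\in U}|f(0,u)|<\infty$. The paper's proof uses the same bound tacitly when it appeals to the SDE moment estimates.
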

    \begin{proof}
    The result is true in the context of bounded measurable terminal costs, as is the main result in \cite{claisse2016pseudo}, so the proof is focused on approximating our running cost by this type of function. First, given $(t,x,z)\in \R \times \R^d \times \R$, let us consider the system $(X,Z)^{t,x,z,\nu}$ where $X$ follows the SDE \eqref{eq:SDE estandar} and $Z$ follows
        \begin{align*}
            Z^{t,x,z,\nu}_s = z + \int_t^s f(X^{t,x,\nu}_s, \nu_s)\text{d}s.
        \end{align*}
        Let $\pi :\R^{d+1} \to \R$ be the projection of the last coordinate $\pi(x) = x^{d+1}$. It is clear that
        \begin{align*}
            \pi(X^{t,x,\nu}_T,Z^{t,x,0,\nu}_T) = Z^{t,x,0,\nu}_T =\int_{t}^T f(X^{t,x,\nu}_s,\nu_s)\text{d}s.
        \end{align*}
        Considering $\pi_n(x) = \pi(x) \cdot 1_{|\pi|\leq n}$ for each $ n \in \N$, clearly $|\pi_n(x,z)|\leq |\pi(x,z)|, \forall (x,z) \in \R^{d+1}$ and, similar to Proposition \ref{prop:PolynomialGrowth}, we know by the estimates for SDEs that 
        \begin{align*}
            \E\left[ |\pi(X^{t,x,\nu}_T ,Z^{t,x,0,\nu}_T ) |\right] = \E \left[ |Z^{t,x,0,\nu}_T| \right] \leq C(1+|x|^2)<\infty,
        \end{align*}
        with $C>0$. Then, from the dominated convergence theorem, we obtain that
        \begin{align}\label{limit:pseudo-markov}
            \E\left[ \pi_n(X^{t,x,\nu}_T ,Z^{t,x,0,\nu}_T) \right] \longrightarrow \E \left[ \pi(X^{t,x,\nu}_T , Z^{t,x,0,\nu}_T) \right].
        \end{align}
        Secondly, we use \cite[Theorem 2]{claisse2016pseudo} on $\pi_n$ and get
        \begin{align}\label{eq:pseudo-markov-bounded}
             \E \left[ \pi_n \left(X^{t,x,\nu}_T , Z^{t,x,0,\nu}_T \right) | \Fc_\theta \right](\omega) = \E \left[ \pi_n \left(X^{\theta(\omega),X^{t,x,\nu}_{\theta(\omega)},\tilde{\nu}_\omega }_T , Z_T^{\theta(\omega), X^{t,x,\nu}_{\theta(\omega)},Z^{t,x,0,\nu}_{\theta(\omega)}, \tilde{\nu}_\omega } \right) \right] =:\gamma_n(\omega) \quad \P\text{-a.s.},
        \end{align}
        where we assume that this equality holds for all $n \in \N$ in a set $\Omega' \subseteq \Omega$ with $\P(\Omega') = 1$\footnote{It is enough to take the intersection of all the almost sure sets in \eqref{eq:pseudo-markov-bounded}.}. A direct result from the construction of $\gamma_n$ is that, following the same argument as before, we obtain that 
        \begin{align*}
            \gamma_n (\omega) \longrightarrow \gamma(\omega) := \E \left[ \pi \left(X^{\theta(\omega),X^{t,x,\nu}_{\theta(\omega)},\tilde{\nu}_\omega }_T , Z^{\theta(\omega), X^{t,x,\nu}_{\theta(\omega)},Z^{t,x,0,\nu}_{\theta(\omega)}, \tilde{\nu}_\omega } \right) \right] \quad \P\text{-a.s.}
        \end{align*}
        Thirdly, we have that $\gamma$ is the conditional expectation of $\pi$ with respect to $\Fc_\theta$. Indeed, given that for $A \in \Fc_\theta$ and for each $n \in \N$ we have
        \begin{align*}
            \int_{A} \gamma_n(\omega) \text{d}\P(\omega) = \int_{A} \pi_n(X^{t,x,z}_T , Z^{t,x,0,\nu}_T) (\omega) \text{d}\P(\omega),
        \end{align*}
        it follows, by using again the dominated convergence theorem on both sides, that
        \begin{align*}
            \int_A \gamma (\omega)\text{d}\P(\omega) = \int_A \pi(X^{t,x,z}_T , Z^{t,x,0,\nu}_T) (\omega) \text{d}\P(\omega),
        \end{align*}
         which, from the definition of conditional expectation, implies that $\gamma (\omega) = \E \left[ \pi(X^{t,x,z}_T , Z^{t,x,0,\nu}_T) | \Fc_\theta \right](\omega)$ for every $\omega \in \Omega'$. This gives us the equality
         \begin{align*}
             \E \left[\pi(X^{t,x,z}_T , Z^{t,x,0,\nu}_T) | \Fc_\theta\right](\omega) =  \E \left[ \pi \left(X^{\theta(\omega),X^{t,x,\nu}_{\theta(\omega)},\tilde{\nu}_\omega }_T , Z_T^{\theta(\omega), X^{t,x,\nu}_{\theta(\omega)},Z^{t,x,0,\nu}_{\theta(\omega)}, \tilde{\nu}_\omega } \right)\right]  \quad \P\text{-a.s.}
         \end{align*}
         Finally, let us go back to the Lagrangian formulation of the problem, the equality is translated into 
         \begin{align*}
             \E \left[ \int_t^T f(X^{t,x,\nu}_s, \nu_s)\text{d}s  \bigg| \Fc_{\theta} \right](\omega) &= \E \left[ Z_{\theta(\omega)}^{t,x,0,\nu} (\omega)+\int_{\theta(\omega)}^{T} f(X^{\theta(\omega),X^{t,x,\nu}_{\theta(\omega)},\tilde{\nu}_\omega} _s , (\tilde{\nu}_\omega)_s) \text{d}s \right] \quad \\ 
             & = Z^{t,x,0,\nu}_{\theta(\omega)}(\omega) + J(\tilde{\nu}_\omega ; \theta(\omega), X^{t,x,\nu}_{\theta(\omega)}(\omega)) \\
             &= \int_{t}^{\theta(\omega)} f(X^{\nu,t,x}_s ,\nu_s)(\omega) \text{d}s + J(\tilde{\nu}_\omega ; \theta(\omega), X^{t,x,\nu}_{\theta(\omega)}(\omega))
         \end{align*}
         for each $\omega \in \Omega'$, obtaining the desired result.
    \end{proof}
\section{Comparison Principle}
The comparison result we consider is analogous to \cite[Theorem A.3.]{bouchard2012weak} with some adapted arguments between the Mayer and Lagrange formulation. As is standard in comparison results for HJB equations, we want to build a contradiction by using a priori estimates of the difference between a supersolution and a subsolution. To do so, it is necessary to have an estimation result for the difference of the Hamiltonians. Let us define
\begin{align*}
    H(x,p,X) := \sup_{u \in U} \left\{ -\Lc^{u}(x,p,X)- f(x,u) \right\},
\end{align*}
which, by Berge's maximum theorem, is continuous, see \cite[Theorem 17.31]{Charalambo2006}. For a pair of matrices $X,Y \in S^d$, we say that $X\leq Y$ if and only if the matrix $(Y-X)\in S^d$ is semipositive definite. 

\begin{lemma}\label{lemma:pre-comparison}
    There exists $\gamma >0$ such that
    \begin{align*}
        \liminf_{\eta\to 0^{+}} (H(y,q,Y^\eta) - H(x,p,X^{\eta}))& \\\leq \gamma (|x-y|(1+|q|&+n^2|x-y|)+ (1+|x|)|p-q|+ (1+|x|^2)|Q|)
    \end{align*}
    for $(x,y)\in C$ with $|x-y|\leq 1$ and for all $(p,q,Q)\in \R^d \times \R^d \times S^{2d}$, $(X^\eta,Y^\eta)_{\eta>0}\subseteq S^d \times S^d$, and 
    $n\geq1$ such that
    \begin{align*}
        \left(\begin{array}{cc}
            X^{\eta} & 0 \\
            0 & -Y^\eta
        \end{array} \right) \leq A_n + \eta A^2_n, ~\forall \eta >0 ,
    \end{align*}
    where
    \begin{align*}
        A_n := n^2\left( \begin{array}{cc}
            I_d & -I_d \\
            -I_d &  I_d
        \end{array} \right) + Q .
    \end{align*}
\end{lemma}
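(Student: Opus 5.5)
The plan is the standard Crandall--Ishii--Lions estimate for the difference of Hamiltonians, adapted to the fact that $\sigma$ and $b$ are Lipschitz (hence of linear growth) and $f$ is uniformly Lipschitz in space. Since $U$ is compact and $H$ is a supremum over $u\in U$, for each $\eta>0$ we bound
\[
H(y,q,Y^\eta)-H(x,p,X^\eta)\le \sup_{u\in U}\Big\{\Lc^u(x,p,X^\eta)-\Lc^u(y,q,Y^\eta)+f(x,u)-f(y,u)\Big\}.
\]
This reduces the claim to a uniform-in-$u$ bound of each term. The running cost term is at most $L_f|x-y|$ by the uniform Lipschitz property of $f$.

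For the drift terms we split
\[
b(x,u)^\top p-b(y,u)^\top q=(b(x,u)-b(y,u))^\top q+b(x,u)^\top(p-q).
\]
The first piece is bounded by $L|x-y||q|$. For the second we use linear growth $|b(x,u)|\le C(1+|x|)$, which follows from the Lipschitz property and compactness of $U$, to get $C(1+|x|)|p-q|$. This produces the $|x-y||q|$ and $(1+|x|)|p-q|$ contributions.

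The second-order term is the main step. We test the matrix inequality against $\Sigma:=\begin{pmatrix}\sigma(x,u)\\ \sigma(y,u)\end{pmatrix}$, that is, with the vectors $(\sigma(x,u)\xi,\sigma(y,u)\xi)$ summed over an orthonormal basis of columns. This gives
\[
\mathrm{Tr}(\sigma\sigma^\top(x,u)X^\eta)-\mathrm{Tr}(\sigma\sigma^\top(y,u)Y^\eta)\le \mathrm{Tr}\big(\Sigma^\top(A_n+\eta A_n^2)\Sigma\big).
\]
The $n^2$ block contributes $n^2|\sigma(x,u)-\sigma(y,u)|^2\le n^2L^2|x-y|^2$. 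The $Q$ block contributes at most $|Q|(|\sigma(x,u)|^2+|\sigma(y,u)|^2)\le C|Q|(1+|x|^2)$, where we use $|y|\le|x|+1$ because $|x-y|\le1$ to replace $|y|$ by $|x|$. The $\eta A_n^2$ term is bounded by $\eta|A_n|^2C(1+|x|^2)$ for fixed $n$ and $Q$, so it vanishes as $\eta\to0^+$; this is exactly why the statement uses a $\liminf$. Since the bounds are uniform in $u$, taking the supremum and then $\liminf_{\eta\to0}$ gives the claim with $\gamma$ depending only on $L_f$, the Lipschitz constants of $b,\sigma$, and $\sup_{u\in U}(|b(0,u)|+|\sigma(0,u)|)$.

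The term $n^2|x-y|^2$ is exactly $|x-y|\cdot n^2|x-y|$, matching the stated form. The only delicate point is the bookkeeping in the trace step: the sign conventions must be right, namely that the block $-Y^\eta$ yields $-\mathrm{Tr}(\sigma\sigma^\top(y,u)Y^\eta)$, and the growth of $\sigma$ at $y$ must be controlled through $x$. I expect no real obstacle beyond this.
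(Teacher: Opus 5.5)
Your proposal is correct and follows the paper's proof essentially step for step. Like the paper, you bound the difference of suprema by the supremum of the differences over $u\in U$, and you split the drift term as $(b(x,u)-b(y,u))^\top q+b(x,u)^\top(p-q)$. You then test the block inequality against the columns of $(\sigma(x,u),\sigma(y,u))$, use $|x-y|\le 1$ to absorb $|y|$ into $|x|$, and let the $\eta A_n^2$ contribution, which is uniform in $u$, vanish in the $\liminf$.
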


\begin{proof}
    Let us fix $u \in U$, we have that
    \begin{align*}
        &-\Lc^u(y,q,Y^\eta) - f(y,u) + \Lc^u(x,p,X^\eta) + f(x,u) \\
       & =(b(x,u)-b(y,u))^\top q + b(x,u)^\top(p-q) - (f(y,u)-f(x,u)) -\frac{1}{2} \left( Tr(\sigma\sigma^\top (y,u)Y^\eta -\sigma \sigma^\top (x,u)X^\eta)  \right) \\
       &= (b(x,u)-b(y,u))^\top q + b(x,u)^\top (p-q) - (f(y,u)-f(x,u))\\& \quad + \frac{1}{2}\sum_{i=1}^d \left(\left(\begin{array}{c}
            \sigma(x,u)  \\
             \sigma(y,u)
       \end{array}\right)^{\cdot,i} \right)^\top \left( \begin{array}{cc}
           X^\eta & 0 \\
          0  & -Y^\eta
       \end{array} \right) \left(\begin{array}{c}
            \sigma(x,u)  \\
             \sigma(y,u)
       \end{array}\right)^{\cdot,i}
    \end{align*}
    where $z^{\cdot,i}$ denotes the $i$-th column of the vector $z$. Thanks to the uniform Lipschitz continuity of $b$ and $f$, and to the linear growth that results from it, there exists $\gamma_1>0$ such that 
    \begin{align*}
        (b(x,u)-b(y,u))^{\top}q + b(x,u)^{\top}(p-q) - (f(y,u)-f(x,u)) \leq \gamma_1 (|x-y|(1+|q|) + (1+|x|)|p-q|).
    \end{align*}
    Next, we recall that
    \begin{align*}
        \left( \begin{array}{cc}
            X^\eta & 0 \\
            0 & -Y^\eta
        \end{array} \right) \leq A_n + \eta A_n^2 \implies z^{\top} \left( \begin{array}{cc}
            X^\eta & 0 \\
            0 & -Y^\eta
        \end{array} \right)z \leq z^{\top} (A_n + \eta A^2_n)z, ~\forall z\in \R^{2d}.
    \end{align*}
    This, along with the definition of $A_n$, the Lipschitz continuity of  $\sigma$, and its linear growth ensures the existence of a constant $\gamma_2>0$ such that 
    \begin{align*}
        \left(\left(\begin{array}{c}
            \sigma(x,u)  \\
             \sigma(y,u)
       \end{array}\right)^{\cdot,i} \right)^{\top} \left( \begin{array}{cc}
           X^\eta & 0 \\
          0  & -Y^\eta
       \end{array} \right) \left(\begin{array}{c}
            \sigma(x,u)  \\
             \sigma(y,u)
       \end{array}\right)^{\cdot,i} \leq& n^2|\sigma^{\cdot, i}(x,u)- \sigma^{\cdot,i}(y,u)|^2 
        +\gamma_2(1+|x|^2 + |y|^2)|Q|\\& + (1+|x|^2+ |y|^2)O(\eta).
    \end{align*}
    Taking $\gamma>0$ large enough so both inequalities hold simultaneously, recalling that $|x-y|\leq 1$ and $\sup A - \sup B \leq \sup A-B$, we obtain
    \begin{align*}
        H(x,q,Y^\eta) - H(x,p,X^\eta)\leq& \gamma(|x-y|(1+|q|+n^2|x-y|) + (1+|x|)|p-q| +(1+|x|^2)|Q|)  \\&+ (1+|x|^2 + |y|^2)O(\eta),
    \end{align*}
    we conclude by taking $\liminf_{\eta \to 0^+}$ on both sides.
\end{proof}

\begin{theorem}\label{Teo:Comparison}  Let $u_1$ be an lower semicontinuous supersolution on $C$ and let $u_2$ be a upper  semicontinuous subsolution on $C^\circ$ of \eqref{eq:ConstrainedHJB}. If $u_1$ and $u_2$ have polynomial growth on $C$ and if $u_2$ is of class $R(C^\circ)$, then
\begin{align*}
    u_1(T,x) \geq u_2(T,x), ~\forall x \in C \implies u_1(t,x)\geq u_2(t,x), ~\forall (t,x) \in [0,T]\times C
\end{align*}
\end{theorem}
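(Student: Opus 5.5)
I will follow the classical doubling-of-variables argument, adapted as in \cite[Theorem A.3]{bouchard2012weak}. The class $R(C^\circ)$ of $u_2$ is used to keep the subsolution's point in the open set $C^\circ$, where the subsolution property holds.

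\textbf{Step 1 (reductions).} Argue by contradiction and assume $\sup_{[0,T]\times C}(u_2-u_1)>0$. Since both functions have polynomial growth, say of degree at most $p$, I first perturb them. Set $\tilde u_i(t,x)=e^{\beta t}u_i(t,x)$, which adds a zeroth-order term $\beta\varphi$ to the equation. Then replace $u_2$ by $u_2^\kappa:=\tilde u_2-\kappa e^{-\beta t}(1+|x|^{2p+2})$. Because $b$ and $\sigma$ have linear growth, $\Lc^u$ applied to $1+|x|^{2p+2}$ is bounded by $c(1+|x|^{2p+2})$ uniformly in $u\in U$. Choosing $\beta>c$ therefore makes $u_2^\kappa$ a strict subsolution on $C^\circ$: it satisfies the inequality with a margin of order $\kappa$. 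The difference $u_2^\kappa-\tilde u_1$ tends to $-\infty$ as $|x|\to\infty$. For $\kappa$ small it still has a positive supremum, which is attained at some $(\hat t,\hat x)\in[0,T]\times C$, since $u_2^\kappa-\tilde u_1$ is upper semicontinuous and coercive. The terminal hypothesis gives $\hat t<T$. Note that $u_2^\kappa$ stays of class $R(C^\circ)$, because the perturbation is continuous.

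\textbf{Step 2 (doubling with an inward shift).} If $\hat x\in C^\circ$ the standard argument applies. The difficulty is $\hat x\in\partial C$, where $u_2$ is not a subsolution. Here I use Definition \ref{def:RO} at $(\hat t,\hat x)$ to get $l$, $\lambda$, $r$ and $B$, and I take $\varepsilon_n\to0$ along which $\varepsilon_n^{-1}|l(\varepsilon_n)|$ stays bounded. I then maximize
\[
\Phi_n(t,x,s,y)=u_2^\kappa(t,x)-\tilde u_1(s,y)-n^2|x-y-l(\varepsilon_n)|^2-n^2|t-s-\lambda(\varepsilon_n)|^2-|x-\hat x|^4-|t-\hat t|^2
\]
over $x\in C$, $y\in C$, with $\varepsilon_n$ coupled to $n$, for instance $n\varepsilon_n\to\infty$ slowly.

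\textbf{Step 3 (where the maximizers lie).} Evaluate $\Phi_n$ at $(\hat t+\lambda(\varepsilon_n),\hat x+l(\varepsilon_n),\hat t,\hat x)$. The continuity property \eqref{eq:RO-continuous} gives $u_2^\kappa(\hat t+\lambda(\varepsilon_n),\hat x+l(\varepsilon_n))\to u_2^\kappa(\hat t,\hat x)$. Hence $\liminf_n\max\Phi_n\ge(u_2^\kappa-\tilde u_1)(\hat t,\hat x)$. The usual argument then shows that the maximizers $(t_n,x_n,s_n,y_n)$ converge to $(\hat t,\hat t,\hat x,\hat x)$. It also shows $n^2|x_n-y_n-l(\varepsilon_n)|^2\to0$, so $x_n=y_n+l(\varepsilon_n)+o(\varepsilon_n)$, using the coupling of $n$ and $\varepsilon_n$. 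Now \eqref{in:defR(C)-2} yields $x_n\in C^\circ$ for large $n$. This membership is the key point. The argument needs some care, since \eqref{in:defR(C)-2} is stated for $y\in O\cap B$ and $y_n$ may lie on $\partial C$. I would handle this by approximating $y_n$ from $C^\circ$, using $\overline{C^\circ}=C$ and the fact that $C^\circ$ is open.

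\textbf{Step 4 (conclusion).} Apply Ishii's lemma (see \cite{crandall1992user}) to $\Phi_n$. This gives elements of the parabolic superjet of $u_2^\kappa$ at $(t_n,x_n)$ and of the subjet of $\tilde u_1$ at $(s_n,y_n)$. Their matrices satisfy the inequality in Lemma \ref{lemma:pre-comparison}, with $Q$ the Hessian of the quartic localization term, which tends to $0$. The subsolution inequality holds at $x_n\in C^\circ$, and the supersolution inequality holds at $y_n\in C$. Subtracting them and applying Lemma \ref{lemma:pre-comparison} bounds the gap by
\[
\gamma\big(|x_n-y_n|(1+|q_n|+n^2|x_n-y_n|)+(1+|x_n|)|p_n-q_n|+(1+|x_n|^2)|Q_n|\big)+o(1).
\]
Here $p_n-q_n$ comes only from the localization gradient, so it tends to $0$. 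The term $n^2|x_n-y_n|^2$ is of order $n^2|l(\varepsilon_n)|^2+o(1)$, which vanishes provided $n\varepsilon_n\to0$. This is in tension with Step 3, so I expect the real work of the proof to lie in balancing the rates of $n$ and $\varepsilon_n$. The way out is to use only $x_n-y_n-l(\varepsilon_n)=o(\varepsilon_n)$ together with $n\varepsilon_n\to0$. The interior condition \eqref{in:defR(C)-2} allows an $o(\varepsilon)$ error, so Step 3 only needs $x_n-y_n-l(\varepsilon_n)=o(\varepsilon_n)$ rather than $n\varepsilon_n\to\infty$. The bound then tends to $0$, while the left-hand side is at least of order $\beta\cdot(\text{positive gap})+\kappa$. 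This is a contradiction. Finally, let $\kappa\to0$.
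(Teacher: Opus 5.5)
\textbf{Gap in Step 4.} Your proposed ``way out'' of the rate conflict does not work. Maximizing $\Phi_n$ tells you only that $n^2|x_n-y_n-l(\varepsilon_n)|^2\to0$, i.e.\ $|x_n-y_n-l(\varepsilon_n)|=o(1/n)$. Nothing in your argument improves this rate. If $n\varepsilon_n\to0$, then $\varepsilon_n\ll 1/n$, so an $o(1/n)$ error is not $o(\varepsilon_n)$. In that case \eqref{in:defR(C)-2} no longer places $x_n$ in $C^\circ$, and you cannot use the subsolution inequality at $(t_n,x_n)$, which is the entire purpose of the shift. If instead $n\varepsilon_n$ stays bounded away from $0$, interiority holds, but the second-order residual $\gamma n^2|x_n-y_n|^2\approx\gamma\,(n|l(\varepsilon_n)|)^2$ does not vanish. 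So no single coupling between $n$ and $\varepsilon_n$ gives both interiority and a vanishing bound, and Step 4 as written produces no contradiction.

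\textbf{Missing idea.} You need a second, independent small parameter that is sent to zero only after $n\to\infty$. This is what the paper does. It takes the shift $l(n^{-1})$ but weights the spatial penalty by a fixed $\varepsilon>0$, using $\tfrac12\varepsilon n^2|x+l(n^{-1})-y|^2$. For fixed $\varepsilon$, the error is still $o(1/n)$, so the subsolution point lies in $C^\circ$. Lemma \ref{lemma:pre-comparison} is then applied with $\varepsilon n^2$ in place of $n^2$. The residual becomes $\gamma\varepsilon\,(n|l(n^{-1})|)^2$, which is bounded along a suitable sequence precisely because of \eqref{ineq:defR(C)-1}; this is what that condition is for. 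The left-hand side (in your setup, $\beta$ times the positive gap plus the $\kappa$-margin) is bounded below independently of $\varepsilon$, so you let $n\to\infty$ and then $\varepsilon\to0$. In your own parametrization, this means taking $n\varepsilon_n\to c$ for a fixed $c>0$, along a sequence on which $\varepsilon_n^{-1}|l(\varepsilon_n)|\le M$. You then accept a residual of order $\gamma c^2M^2$ and only afterwards send $c\to0$.

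\textbf{Smaller point.} Your $\Phi_n$ has no growth penalty in $(s,y)$. For polynomial growth of degree $p>2$, the term $-\tilde u_1(s,y)$ can dominate $n^2|x-y-l(\varepsilon_n)|^2$, so the supremum need not be attained. Add the growth term in $y$ as well, as the paper does with $\phi(t,x)+\phi(s,y)$; equivalently, also perturb $\tilde u_1$ into a strict supersolution. Your treatment in Step 3 of a possibly boundary point $y_n$, by approximation from $C^\circ$, is fine and more careful than the paper's.
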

\begin{proof}
    Without loss of generality, we will prove the result for the equation
    \begin{align}\label{eq:altered-HJB}
        \rho \varphi - \partial_t\varphi + \sup_{u\in U} \left\{ - \Lc^{u}(\cdot, D\varphi,D^2\varphi) - f(\cdot,u) \right\} = 0,
    \end{align}
    since $\varphi$ is subsolution of \eqref{eq:ConstrainedHJB} if and only if $e^{-\rho t}\varphi$ is a subsolution of \eqref{eq:altered-HJB}, and analogously with the supersolution property. 
    
    Let us assume that $u_1(T,\cdot)\geq u_2(T,\cdot)$ on $C$. Due to the polynomial growth of both subsolution and supersolution, we take $p\geq 1$ and $A> 0$ to be such that $u_2 (t,x) - u_1(t,x)<A(1+|x|^p)$ for each $(t,x) \in [0,T]\times C$. Let us assume for contradiction that $\sup(u_2 - u_1) > 0$; then is possible to find $\iota>0$ and $(t_0,x_0) \in [0,T]\times C$ such that
    \begin{align}\label{ineq:xi}
        \xi := (u_2 - u_1 - 2\phi)(t_0,x_0) = \max_{[0,T]\times C} (u_2-u_1-2\phi)>0
    \end{align}
    where
    \begin{align*}
        \phi(t,x):= \iota e^{-\kappa t} (1+|x|^{2p}).
    \end{align*}
    The existence of a maximizer is guaranteed due to the polynomial growth of the difference of both functions and $\kappa>0$ is large enough so that the function
    \begin{align*}
        m(t,x):= -\rho \phi(t,x)+ \partial_t\phi(t,x) + \gamma((1+|x|)|D\phi(t,x)| + (1+|x|^2)|D^2\phi (t,x)|)
    \end{align*}
    is nonpositive on $[0,T]\times\R^d$\footnote{When computing the derivatives of $\phi$ explicitly, the existence of $\kappa>0$ is not hard to check.}, and $\gamma$ is the one from Lemma \ref{lemma:pre-comparison}. Note that the assumption $u_1(T,\cdot) \geq u_2(T,\cdot)$ on $C$ implies that $(t_0,x_0)\in [0,T)\times C$. Now we separate in cases depending on whether $x_0$ is in the boundary or not.

    \textbf{Case 1:} $x \in \partial C$. For each $n\geq1$, there exists a point $(t^n, x^n, s^n, y^n) \in ([0,T]\times C)^2$ satisfying
    \begin{align*}
        \Phi^n(t^n,x^n,s^n,y^n) = \max_{[0,T]\times C} \Phi^n, \quad \Phi^n(t,x,s,y):= u_2(s,y) - u_1(t,x) - \Theta^n(t,x,s,y)
    \end{align*}
    and 
    \begin{align*}
        \Theta^n(t,x,s,y) := \frac{1}{2}n^2 (|t+\lambda(n^{-1}) - s|^2 + \varepsilon|x + l(n^{-1}) -y|^2+ |t-t_0|^2 + |x-x_0|^4) + \phi(t,x)+\phi(s,y),
    \end{align*}
    with $\varepsilon>0$, and $l,\lambda$ given for $x_0$ by Definition \ref{def:RO}\footnote{This thanks to the assumption that $u_2$ is of class $R(C^\circ)$}. It follows from the definition of $(t^n,x^n,s^n,y^n)$, the continuity of the function $\phi$, and the continuity of $u_2$ along the trajectory $\varepsilon \to (t+\lambda(\varepsilon),x+l(\varepsilon))$ that
    \begin{align*}
       \Phi(t^n,x^n,s^n,y^n) &\geq \Phi^n(t_0,x_0,t_0+\lambda(n^{-1}), x_0+l(n^{-1})) \\
        &= (u_2 - u_1 - 2\phi)(t_0,x_0) + o(1) \\
        & = \xi +o(1).
    \end{align*}
    Due to the polynomial growth of the difference between $u_2-u_1$, the definition of $\Theta^n$, and the previous inequality, we have that $(t^n,x^n,s^n,y^n)$ stays in a compact set, therefore we can assume that, up to a subsequence, $(t^n,x^n,s^n,y^n) \to(t^\infty,x^\infty,s^\infty,y^\infty)\in ([0,T]\times C)^2$. Now, using again the definition of $\Theta^n$ we obtain that $t^\infty = s^\infty$ and $x^\infty = y^\infty$. We then have
    \begin{align*}
        \xi &= (u_2 - u_1 - 2\phi)(t_0,x_0) \\
        &  = \max_{[0,T]\times C}(u_2 - u_1 - 2\phi) \\
        & \geq (u_2- u_1-2\phi)(t^\infty,x^\infty) - |t^\infty - t_0|^2 - |x^\infty-x_0|^4 \\
        & \quad -\limsup_{n \to \infty} \frac{1}{2} n^2 (|t^n + \lambda(n^{-1})-s^n|^2 + \varepsilon|x^n + l(n^{-1})-y^n|^2) \\
        & \geq \liminf_{n\to \infty} \Phi^n(t^n,x^n,s^n,y^n) \\
        & \geq \xi,
    \end{align*}
    which implies that $(t^\infty,x^\infty) = (t_0,x_0)$ and 
    \begin{align*}
        \limsup_{n\to\infty} \frac{1}{2}n^2(|t^n + \lambda(n^{-1})- s^n|^2 + \varepsilon|x^n + l(n^{-1})-y^n|^2) = 0.
    \end{align*}
    After passing to another subsequence, we deduce that
    \begin{align}
        \label{lim:A7} (t^n,x^ns^n,y^n) &\to (t_0,x_0,t_0,x_0) \\
        \label{lim:A8} u_2(s^n,y^n)- u_1(t^n,x^n) &\to (u_2-u_1)(t_0,x_0) \\
         \label{eq:A9} s^n = t^n+\lambda(n^{-1})+o(n^{-1}),& \quad y^n = x^n+l(n^{-1}) +o(n^{-1}).
    \end{align}
    Now, since $(t_0,x_0)\in [0,T)\times \partial C$, it follows from \eqref{in:defR(C)-2} and \eqref{eq:A9} that $(s^n,y^n)\in [0,T)\times C^\circ$ for $n$ large enough.

    Let $\overline{\Pc}^{2,-}_{C^{\circ}}u_1$ and $\overline{\Pc}^{2,+}_{C^\circ}u_2$ be the closed parabolic sub- and superjets as defined in \cite[Section 8]{crandall1992user}. From the Crandall-Ishii's Lemma for parabolic problems, \cite[Theorem 8.3.]{crandall1992user}, we obtain, for each $\eta >0$, elements 
    \begin{align*}
        (a^n,p^n,X^n_\eta) \in \overline{\Pc}^{2,+}_{C^\circ}u_2(s^n,y^n) \quad (b^n,q^n,Y^n_\eta) \in \overline{\Pc}^{2,-}_{C^{\circ}}u_1(t^n,x^n)
    \end{align*}
    such that
    \begin{align*}
        &a^n = \partial_t \Theta^n(t^n,x^n,s^n,y^n), \quad b^n = - \partial_s\Theta(t^n,x^n,s^n,y^n), \\
        &p^n = D_x\Theta(t^n,x^n,s^n,y^n), \quad q^n=  -D_y\Theta(t^n,x^n,s^n,y^n),
    \end{align*}
    and
    \begin{align*}
        \left( \begin{array}{cc}
            X^n_\eta &0  \\
            0 & -Y^n_\eta
        \end{array} \right) \leq A_n+\eta A^2_n,
    \end{align*}
    where $A_n = D^2\Theta(t^n,x^n,s^n,y^n)$; i.e.
    \begin{align*}
        A_n = \varepsilon n^2 \left( \begin{array}{cc}
            I_d & -I_d \\
            -I_d & I_d
        \end{array} \right) + \underbrace{\left( \begin{array}{cc}
            D^2\phi (t^n,x^n)+ O(|x^n-x_0|^2) & 0  \\
            0 & D^2\phi(s^n,y^n)
        \end{array} \right)}_{=:Q^n}
    \end{align*}
    In view of the super- and subsolution property of $u_1$ and $u_2$, the fact that $(s^n,y^n)\in [0,T)\times C^\circ$ for $n$ large, we obtain
    \begin{align*}
        \Delta_n &:= \rho(u_2(s^n,y^n) - u_1(t^n,x^n)) \\
            &\leq  (\partial_s\phi(s^n,y^n) +n^2(s^n-(t^n+\lambda(n^{-1})))  -H(x^n,p^n, X^n_\eta))  \\
            &\quad + (\partial_t\phi(t^n,x^n) +n^2(t^n+\lambda(n^{-1})-s^n))+2(t-t_0) + H(y^n,q^n,Y^n_\eta) ) \\
            &= \partial_s\phi(s^n,y^n) + \partial_t\phi(t^n,x^n) + 2(t-t_0) + (H(y^n,q^n,Y^n_\eta) - H(x^n,p^n,X^n_\eta)),
    \end{align*}
    by taking $\liminf_{\eta \to 0^+}$ in both sides and applying Lemma \ref{lemma:pre-comparison}, we have
    \begin{align*}
        \Delta_n &\leq \partial_s\phi(s^n,y^n) + \partial_t\phi(t^n,x^n) + 2(t-t_0) \\
        &\quad  +\gamma(|x^n-y^n|(1+|q^n|+n^2|x^n-y^n|) + (1+|x^n|)|p^n-q^n| + (1+|x^2|)|Q^n|).
    \end{align*}
    By the definition of $p^n$ and $q^n$, it follows that
    \begin{align*}
        \Delta_n &\leq 2(t-t_0) + \partial_s(s^n,y^n) + \partial_t(t^n,x^n) \\
        & \quad + \gamma|x^n-y^n|(1+|D \phi(s^n,y^n)| + \varepsilon n^2|x^n+l(n^{-1} )- y^n| + n^2|x^n-y^n|) \\
        & \quad + \gamma(1+|x^n|)(|D\phi(s^n,y^n)| + |D\phi(x^n,y^n)|+4|x^n-x_0|^3) \\
        & \quad + \gamma(1+|x^n|^2)(|D^2\phi(s^n,y^n)| + |D^2\phi(s^n,y^n)| + O(|x^n-x_0|^2))
    \end{align*}
    Recalling \eqref{lim:A7}-\eqref{eq:A9}, as $n\to \infty$, this inequality implies that
    \begin{align*}
        \rho(u_2 - u_1)(t_0,x_0) &\leq  2\partial_t\phi(t_0,x_0) + \gamma \varepsilon(\liminf_{n\to \infty} nl(n^{-1}))^2 \\
        &\quad +2\gamma ((1+|x_0|)|D\phi(t_0,x_0)| + (1+|x_0|^2)|D^2\phi(t_0,x_0)|).
    \end{align*}
    Thanks to \eqref{ineq:defR(C)-1}, we can take $\varepsilon\to 0$ and obtain that
    \begin{align*}
        0< \xi =\rho(u_2 - u_1-2\phi) (t_0,x_0)  \leq 2m(t_0,x_0).
    \end{align*}
    Since $\kappa$ was chosen with the purpose of $m$ being nonpositive on $[0,T]\times \R^d$, this contradicts \eqref{ineq:xi}.

    \textbf{Case 2}: $x \in C^\circ$. This case is simpler, since the boundary is not a problem we have to continuously worry about. This case is handled by using
    \begin{align*}
        \Theta^n(t,x,s,y):= \frac{1}{2}n^2 (|t-s|^2 + |x-y|^2) + |t-t_0|^2 + |x-x_0|^4  + \phi(t,x) + \phi(s,y),
    \end{align*}
    where with $n$ large enough, implies that $x^n,y^n\in C^{\circ}$. The rest of the proof use the same arguments and the Case 1.

\end{proof}

\end{document}